\newtheorem{theorem}{Theorem}[section]
\newtheorem*{theorem*}{Theorem}
\newtheorem{proposition}[theorem]{Proposition}
\newtheorem{lemma}[theorem]{Lemma}
\newtheorem{corollary}[theorem]{Corollary}
\theoremstyle{definition}
\newtheorem{definition}[theorem]{Definition}
\theoremstyle{remark}
\newtheorem{remark}[theorem]{Remark}
\newtheorem*{remark*}{Remark}
\newcommand{\con}[1]{\mathbb{#1}}
\newcommand{\R}{\con{R}} 
\newcommand{\N}{\con{N}} 
\newcommand{\ncal}{\mathcal{N}}
\DeclareMathAlphabet{\mathpzc}{OT1}{pzc}{m}{it}
\DeclareMathAlphabet\euscr{T1}{qzc}{m}{n}
\newcommand{\leqnomode}{\tagsleft@true\let\veqno\@@leqno}
\newcommand{\reqnomode}{\tagsleft@false\let\veqno\@@eqno}
\newcommand{\norm}[1]{\left \| {#1} \right \| }
\newcommand{\seminorm}[1]{\left [ {#1} \right ] }
\newcommand{\s}{s}
\newcommand{\fraclaplacian}{(-\Delta)^\s}
\newcommand{\halflaplacian}{(-\Delta)^{1/2}}
\newcommand{\loc}{\mathrm{loc}}
\renewcommand{\d}{\,\mathrm{d}} 
\newcommand{\df}{\mathrm{d} } 
\newcommand{\dx}{\,\mathrm{d}x} 
\newcommand{\dint}{\int \! \int}
\newcommand{\average}{\fint}
\newcommand\beqc[1]{\left\{\begin{array}{#1}}
	\newcommand\eeqc{\end{array} \right.}
\def\PDEsystem{rcll}
\def\bmatrix{\begin{pmatrix}}
	\def\ematrix{\end{pmatrix}}
\DeclareMathOperator{\dist}{dist}
\newcommand{\usub}{\underline{u}}
\newcommand{\usup}{\overline{u}}
\renewcommand{\leq}{\leqslant}
\renewcommand{\geq}{\geqslant}
\numberwithin{equation}{section}
\DeclareRobustCommand{\SkipTocEntry}[5]{}
\title[Stable solutions to fractional semilinear equations]{Stable solutions to fractional semilinear equations: uniqueness, classification, and approximation results}
\author{Tomás Sanz-Perela}
\address{T. Sanz-Perela:
	Departament de Matemàtiques i Informàtica, Universitat  de Barcelona, Gran Via de les Corts Catalanes 585, 
	08007 Barcelona, Spain}
\email{tomas.sanz.perela@ub.edu}
\thanks{The author was supported  by grants PID2020-113596GB-I00, PID2021-123903NB-I00, and RED2018-102650-T funded by MCIN/AEI/10.13039/501100011033 and by ``ERDF A way of making Europe'', and by the EPSRC grant EP/S03157X/1}
\keywords{Fractional Laplacian, stable solutions, Dirichlet problem, approximation}
\begin{document}

\setstretch{1.0740}

\begin{abstract}
We study stable solutions to fractional semilinear equations $(-\Delta)^s u = f(u)$ in $\Omega \subset \mathbb{R}^n$, for convex nonlinearities $f$, and under the Dirichlet exterior condition $u=g$ in $\mathbb{R}^n \setminus \Omega$ with general $g$.
We establish a uniqueness and a classification result, and we show that weak (energy) stable solutions can be approximated by a sequence of bounded (and hence regular) stable solutions to similar problems.

As an application of our results, we establish the interior regularity of weak (energy) stable solutions to the problem for the half-Laplacian in dimensions $1 \leq n \leq 4$.

\end{abstract}

\maketitle

\setcounter{tocdepth}{1}
\tableofcontents

\section{Introduction}

There is a common property among many scenarios in nature: the observed state of a system is that which, in some sense, minimizes an energy (or action).
When the system configuration is described by a function of several variables, this usually gives rise to a PDE.
A prominent example consists of considering, for a given domain $\Omega\subset \R^n$, the functional
\begin{equation}
	E[u] = \dfrac{1}{2} \int_\Omega |\nabla u|^2 \d x - \int_\Omega F(u) \d x.
\end{equation}
Critical points of this functional satisfy the Euler-Lagrange equation $-\Delta u = f(u)$ in $\Omega$, with $f=F'$. 
This type of reaction-diffusion equations has been used through many years as a basic model for  combustion of substances, phase transitions, and population dynamics.

When studying physically observable states of a system, one looks for solutions which are not only critical points of $E[\cdot]$, but which are also \textit{local minimizers}.
More generally, the class of solutions taken into account is the class of \textit{stable solutions}.
These are solutions at which the second variation of $E[\cdot]$ is nonnegative.
This fact provides some extra information which, together with the PDE, in many cases yields the rigidity and/or regularity of stable solutions.
This kind of properties has been investigated in the last decades from some part of the PDE community ---see the monograph \cite{Dupaigne} and also the introduction of \cite{CabreFigalliRosSerra-Dim9}.

The regularity of stable solutions to $-\Delta u = f(u)$ has been a long-standing problem in elliptic PDEs since the 1970s.
After important efforts devoted to investigate the optimal dimension up to which stable solutions are bounded, the problem has been recently solved by Cabré, Figalli, Ros-Oton, and Serra~\cite{CabreFigalliRosSerra-Dim9}, by proving that stable solutions  are regular in dimensions $n\leq 9$  for all nonnegative nonlinearities $f$.
Our main motivation for this paper was the study of the same problem in the nonlocal framework, where one replaces $-\Delta$ by the fractional Laplacian ---see \eqref{Eq:DefFracLap} below---, the most canonical example of integro-differential operator used to model diffusion with long-range interactions.
In this case, the few known results\footnote{We refer to the introduction of  \cite{CabreSanzPerela-HalfLaplacian} for a more precise description of the state of the art of the problem, and also to Figure~1 in that paper for a graphical summary of the known results.} (mainly those contained in the four papers~\cite{RosOtonSerra-Extremal, RosOton-Gelfand, SanzPerela-Radial, CabreSanzPerela-HalfLaplacian}) reach the expected optimal dimension for boundedness of stable solutions only when $f(u) = \lambda e^u$, even in the radial case.

The recent paper \cite{CabreSanzPerela-HalfLaplacian} succeeded in extending some of the techniques of \cite{CabreFigalliRosSerra-Dim9} to the fractional setting, obtaining a universal Hölder estimate in dimensions $1\leq n\leq 4$ for semilinear equations driven by the half-Laplacian.
That result (stated with precision in \Cref{Th:Holder} below) is an a priori estimate for regular stable solutions. 
One of the main motivations for this paper was to provide the necessary tools in order to use the result in \cite{CabreSanzPerela-HalfLaplacian} (and possible future estimates) to establish regularity of weak (energy) stable solutions (see \Cref{Def:EnergySolution}).
This is done in this work through an approximation argument. 
We show that weak (energy) stable solutions can be approximated by a sequence of regular stable solutions to similar problems with different nonlinearities.
Then, the key point is to have at hand universal estimates not depending on the nonlinearity, as the one obtained in \cite{CabreSanzPerela-HalfLaplacian}, to carry the estimates to the limit.

One of the purposes of this article is to provide some general results that can be used in the future when studying stable solutions ---at the moment the only known universal estimate for stable solutions in the fractional setting is that of \cite{CabreSanzPerela-HalfLaplacian}.
For this, and although usually one considers nonnegative stable solutions vanishing outside the domain where the equation holds, for the sake of generality in this paper we will consider stable solutions which are bounded by below, and which satisfy a rather general exterior condition.

Before stating our main results in \Cref{Sec:MainResults}, we need to define properly the notions of solution used in this paper, as well as the definition of stable solution.
We also settle some notation.

\subsection{Definitions and notation}

In this paper, we study stable solutions (see \Cref{Def:Stable} below) to
\begin{equation}
	\label{Eq:SemilinearProblem}
	\beqc{\PDEsystem}
	\fraclaplacian u & = & f(u) & \text{ in } \Omega,\\
	u & = & g & \text{ in } \R^n\setminus \Omega,
	\eeqc
\end{equation}
where $\Omega$ is a smooth bounded domain of $\R^n$ and $\fraclaplacian$ is the fractional Laplacian,
\begin{equation}
	\label{Eq:DefFracLap}
	\fraclaplacian w\,(x) := c_{n,\s} \int_{\R^{n}} \dfrac{w(x) - w(y)}{|x-y|^{n + 2\s}}  \d y ,\quad \s \in (0,1),
\end{equation}
with  $c_{n,\s}$ being a positive normalizing constant.
Through the article, $f$ will be a convex nonlinearity, and we will consider general exterior datum $g$ being measureable and satisfying 
\begin{equation}
	\label{Eq:Assumptionsg}
	|g(x) - g(z) | \leq C_0 |x-z|^{\alpha_0}  \quad  \text{ for all } x \in \R^n \setminus \Omega \text{ and } \ z\in \partial \Omega
\end{equation}
for some $\alpha_0 \in (\max\{0, 2\s - 1\},  \s)$ and some positive constant $C_0$.
This assumption provides the regularity near $\partial \Omega$ and the growth at infinity that guarantees that the three notions of solution considered through the article are well defined for such exterior conditions $g$.
For a more detailed discussion on how much this condition can be relaxed depending on the class of solution considered, see \Cref{Sec:ExteriorCondition}.

We define $d_\Omega := \dist(\cdot, \partial \Omega)$, which denotes the distance to $\partial \Omega$, and 
\begin{equation}
	\ncal_\s w (x) := c_{n,\s} \int_\Omega \dfrac{w(x) - w(y)}{|x-y|^{n + 2\s}} \d y \quad \text{ for } x \in \R^n\setminus \Omega,
\end{equation}
which is usually called \textit{nonlocal normal derivative} for its analogies with the classical normal derivative (in terms of the Neumann problem and the integration by parts formula for the fractional Laplacian, see \cite{DipierroRosOtonValdinoci-Neumann}).\footnote{
Note that if $w \equiv 0$ in $\R^n \setminus \Omega$, then  $\ncal_\s w = \fraclaplacian w$ in $\R^n \setminus \Omega$.}

The study of stable solutions usually involves three different notions of solutions to the Dirichlet problem
\begin{equation}
	\label{Eq:DirichletPbLinear}
	\beqc{\PDEsystem}
	\fraclaplacian u & = & h& \text{ in } \Omega,\\
	u & = & g & \text{ in } \R^n\setminus \Omega,
	\eeqc
\end{equation}
which we define properly next.
This first one is the notion of $L^1$\textit{-weak solution}. 
Although in the literature these are called sometimes \textit{very weak solutions} or \textit{distributional solutions}, we prefer to use the same terminology as is \cite{Dupaigne}.\footnote{$L^1$-weak solutions can be equivalently defined through the Green kernel when $g=0$, see~ \cite{AbdellaouiFernandezLeonoriYounes-CalderonZygmund}.}

\begin{definition}
	\label{Def:L1WeakSol}
	Given $\Omega\subset \R^n$ a smooth bounded domain, and given $h \in L^1(\Omega, d_\Omega^\s \dx)$ and $g$ satisfying \eqref{Eq:Assumptionsg}, we say that $u$ is an $L^1$-\textit{weak solution} to \eqref{Eq:DirichletPbLinear} if $u \in L^1(\Omega)$, $u = g$ in $\R^n\setminus \Omega$, and
	\begin{equation}
		\int_{\Omega} u \fraclaplacian \varphi \d x + \int_{\R^n \setminus \Omega} g \ncal_\s \varphi \d x = \int_{\Omega} h \varphi \d x
	\end{equation} 
	for every $\varphi$ such that $\varphi$ and $\fraclaplacian \varphi$ are bounded in $\Omega$ and such that $\varphi \equiv 0$ in $\R^n\setminus \Omega$.\footnote{Note that by standard regularity for the fractional Laplacian, the test functions $\varphi$ that we consider are $C^\s(\overline{\Omega})$.
		In particular, for all $x\in \Omega$ it holds $|\varphi(x)| \leq C d_\Omega^\s (x)$ for some constant $C$ ---this is why the integrability assumption on $h$ involves the weight $d_\Omega^\s$.}
\end{definition}

The second notion of solution that we will consider is the one naturally associated with the variational formulation of the problem, and we give it next.  
It requires the solution to belong to the energy space $H^\s_\Omega := \{ w:\R^n \to \R \, : \, w \in L^2(\Omega) \text{ and } [w]^2_{H^\s_\Omega} < +\infty\}$, where
\begin{equation}
	 [w]^2_{H^\s_\Omega} := \dfrac{c_{n,\s}}{2} \dint_{\R^{2n}\setminus (\Omega^c \times \Omega^c)} \dfrac{| w(x) - w(y) |^2}{|x-y|^{n + 2\s}} \d x \d y,
\end{equation}
and $\Omega^c : = \R^n\setminus \Omega$.
Note that $H^\s_\Omega$ is a separable Hilbert space with norm $
	\norm{\cdot}^2_{H^\s_\Omega} := \norm{\cdot}_{L^2(\Omega)}^2 + [\cdot]^2_{H^\s_\Omega},
$
and scalar product given by $\langle \cdot, \cdot \rangle_{L^2(\Omega)} + \langle \cdot, \cdot \rangle_{H^\s_\Omega}$, where $\langle \cdot, \cdot \rangle_{L^2(\Omega)}$ is the standard scalar product in $L^2(\Omega)$ and
\begin{equation}
	\langle v, w \rangle_{H^\s_\Omega} := \dfrac{c_{n,\s}}{2}\dint_{\R^{2n}\setminus (\Omega^c \times \Omega^c)} \dfrac{\big ( v(x) - v(y) \big)\big ( w(x) - w(y) \big)}{|x-y|^{n + 2\s}} \d x \d y.
\end{equation}
Through the paper we will also use the space $H^\s_{\Omega,0} := \{ w \in  H^\s_\Omega \ : \ w = 0 \ \text{ a.e. in } \R^n \setminus \Omega\}$, which is the closure of $C^\infty_c(\Omega)$ under the norm $\norm{\cdot}_{H^\s_\Omega}$ (at least for $\Omega$ smooth enough).
Note that thanks to the fractional Sobolev inequality, if $\Omega$ is a bounded smooth domain then the fractional Poincaré inequality holds and therefore $\norm{\cdot}_{H^\s_\Omega}$ and $[\cdot]_{H^\s_\Omega}$ are equivalent norms in $H^\s_{\Omega,0}$ ---and, as a byproduct, $\langle \cdot, \cdot \rangle_{H^\s_\Omega} $ is a scalar product in $H^\s_{\Omega,0}$.
For more details, see \cite[Section~3.1]{CozziPassalacqua} and the references therein.

\begin{definition}
	\label{Def:EnergySolution}
	Given $\Omega\subset \R^n$ a smooth bounded domain, $h\in L^2(\Omega)$, and  $g$ satisfying \eqref{Eq:Assumptionsg}, we say that $u$ is an \textit{energy solution} (or \textit{variational solution}) to \eqref{Eq:DirichletPbLinear} if $u\in H^\s_\Omega$, $u = g$ in $\R^n\setminus\Omega$, and
	\begin{equation}
		\langle u, \varphi \rangle_{H^\s_\Omega} = \dfrac{c_{n,\s}}{2}\dint_{\R^{2n}\setminus (\Omega^c \times \Omega^c)} \dfrac{\big ( u(x) - u(y) \big)\big ( \varphi(x) - \varphi(y) \big)}{|x-y|^{n + 2\s}} \d x \d y= \int_{\Omega} h \varphi \d x.
	\end{equation}
	for all $\varphi\in H^\s_\Omega$ with $\varphi \equiv 0$ in $\R^n \setminus \Omega$.
\end{definition}

Note that the test functions considered in the energy formulation belong to $H^\s(\R^n)$, and that by density it is enough to consider test functions $\varphi \in C^\infty_c(\Omega)$.
Using integration by parts (since $\varphi$ is regular enough), one sees that
\begin{equation}
	\begin{split}
		\langle u, \varphi \rangle_{H^\s_\Omega} 
		&= \int_\Omega  u \, \fraclaplacian \varphi \d x + \int_{\R^n \setminus \Omega} u \, \ncal_\s \varphi \d x ,
	\end{split}
\end{equation}
showing that energy solutions are always $L^1$-weak solutions.

\begin{remark}
	\label{Remark:L1WeakSolWhichAreHs}
	If $u$ is an $L^1$-weak solution to \eqref{Eq:DirichletPbLinear} and $u\in H^\s_{\Omega}$, by taking $\varphi\in C^\infty_c(\Omega)$ in the weak formulation we can integrate by parts to obtain that
	\begin{equation}
		\label{Eq:L1WeakFormulationHs}
		\langle u, \varphi \rangle_{H^\s_\Omega} = \int_\Omega h \varphi \d x.
	\end{equation}
	Then, by density this holds for every $\varphi \in H^\s_{\Omega,0}$ such that $h \varphi \in L^1(\Omega)$.
	We will consider this setting in most of the results of this paper, exploiting the previous variational-like formulation.
	As a terminological remark, we note that since the right-hand sides that we consider through the paper are merely in $L^1(\Omega)$, we will not call our solutions `energy solutions'.
\end{remark}

The third notion of solution considered in this article is the classical one.

\begin{definition}
	\label{Def:PointwiseSolution}
	Given $\Omega\subset \R^n$ a smooth bounded domain, $h\in C(\Omega)$, and  $g$ satisfying \eqref{Eq:Assumptionsg}, we say that $u$ is a \textit{pointwise solution} (or \textit{classical solution}) to \eqref{Eq:DirichletPbLinear} if $u\in C(\overline{\Omega})$ and \eqref{Eq:DirichletPbLinear} is satisfied pointwise.\footnote{A typical assumption to ensure that $\fraclaplacian u$ is well defined pointwise is that $u\in C^2(\Omega)\cap L^1_\s(\R^n)$ (see \eqref{Eq:DefL1s} for this last space). The $C^2$ assumption can be relaxed depending on $\s\in (0,1)$, requiring only $u\in C^\alpha(\Omega)$ for some $\alpha > 2\s$. The required integrability at infinity is satisfied  under assumption \eqref{Eq:Assumptionsg}.}
\end{definition}

\begin{remark}
	\label{Remark:BoundedSolutions}
	For the semilinear problem \eqref{Eq:SemilinearProblem} the three notions of solution coincide when $u$ is bounded in $\Omega$. 
	Indeed, if $u$ is an $L^1$-weak solution to \eqref{Eq:SemilinearProblem} which is bounded in $\Omega$, then it is a classical solution (and by assumption \eqref{Eq:Assumptionsg} also an energy solution, arguing as in ~\Cref{Sec:ExteriorCondition}).
	To see this, one first uses that the fractional Laplacian commutes with convolution and, by considering a standard mollifier $\eta_\varepsilon$, it follows that	$\fraclaplacian(u * \eta_\varepsilon )  = f(u)* \eta_\varepsilon$ in any smaller domain $\Omega' \subset \subset \Omega$ for $\varepsilon$ small enough depending on $\Omega'$.
	Using the usual interior estimates for the fractional Laplacian (see \cite[Corollaries~2.4 and 2.5]{RosOtonSerra-Regularity}) and letting $\varepsilon\to 0$ it follows that $u\in C^\alpha(\Omega)$ for some $\alpha > 2 \s$ and thus $\fraclaplacian u = f(u)$ holds pointwise in $\Omega$.
	Moreover, by the results of Audrito and Ros-Oton \cite{AudritoRosOton}, $u$ is continous up to the boundary.\footnote{The results of \cite{AudritoRosOton} hold for $L^1$-weak solutions since are based on the comparison principle; see \cite[Remark~1.5]{AudritoRosOton}.}
\end{remark}

We recall now the definition of stability in the fractional setting.

\begin{definition}
	\label{Def:Stable}
	Let $u$ be a solution (in any of the previous three senses defined above) to the semilinear problem \eqref{Eq:SemilinearProblem} with $f\in C^1(\R)$.
	We say that $u$ is \textit{stable} in $\Omega$ if $f'(u) \in L^1_\loc(\Omega)$ and
	\begin{equation}
		\label{Eq:Stability}
		\int_\Omega f'(u) \xi^2 \d x \leq [\xi]^2_{H^\s_{\Omega}} \quad \text{ for every } \xi \in C^\infty_c(\Omega).
	\end{equation}
	
\end{definition}

Note that if one considers the energy functional associated to problem \eqref{Eq:SemilinearProblem},
\begin{equation}
	E[w] = \dfrac{1}{2}\seminorm{w}_{H^{\s}_\Omega}^2 -  \int_\Omega F(w) \d x \quad \text{ with } F' = f,
\end{equation}
then the stability condition \eqref{Eq:Stability} is equivalent to
\begin{equation}
	\dfrac{\df^2}{\df \varepsilon^2} E[u + \varepsilon \xi] \geq 0 \quad \text{ for every } \xi \in C^\infty_c(\Omega)
\end{equation}

\subsection{Main results}
\label{Sec:MainResults}

Let us now present the main results of this article.
We will always assume that the nonlinearity $f$ is strictly convex, although after the proof of each result we will make some comments on the (possible) differences if one allows $f$ to be affine.
 
Our first result is the following uniqueness theorem for stable solutions.
It is proved in \Cref{Sec:Uniqueness}.

\begin{theorem}
	\label{Th:UniquenessStableNonlinear}
	Let $\Omega\subset \R^n$ a bounded smooth domain of $\R^n$ and let $f\in C^1(\R)$ be a strictly convex function.
	
	Then, there exists at most one stable solution $u$ to \eqref{Eq:SemilinearProblem} ---in the $L^1$-weak sense and for $g$ as in \eqref{Eq:Assumptionsg}--- such that $u$ is bounded by below and $u\in H^\s_{\Omega}$.
\end{theorem}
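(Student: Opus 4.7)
The plan is to exploit the strict convexity of $f$ together with the stability of both solutions to derive a contradiction when they differ. Let $u_0$ and $u_1$ be two stable $L^1$-weak solutions as in the statement and set $w := u_1 - u_0$. Since $u_0 = u_1 = g$ in $\R^n\setminus \Omega$, we have $w \in H^\s_{\Omega,0}$, and therefore the positive and negative parts $w_\pm$ also belong to $H^\s_{\Omega,0}$. Subtracting the variational identities~\eqref{Eq:L1WeakFormulationHs} written for $u_1$ and $u_0$ and testing against $w_+$ (after justifying this choice of test function by approximation from $C^\infty_c(\Omega)$), one obtains
\begin{equation*}
\langle w, w_+\rangle_{H^\s_\Omega} = \int_\Omega \bigl(f(u_1)-f(u_0)\bigr) w_+ \d x.
\end{equation*}
Since $w_+$ and $w_-$ have disjoint supports and are nonnegative, a pointwise inspection of the integrand defining $\langle w_+, w_-\rangle_{H^\s_\Omega}$ shows that it is nonpositive; writing $w = w_+ - w_-$, this yields $[w_+]^2_{H^\s_\Omega} \leq \langle w, w_+\rangle_{H^\s_\Omega}$.

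The strict convexity of $f$ next gives $f(u_1)-f(u_0) \leq f'(u_1)(u_1 - u_0)$, with strict inequality on $\{u_0 \neq u_1\}$. Multiplying by $w_+ \geq 0$ (which is supported on $\{u_1 > u_0\}$) and integrating yields
\begin{equation*}
\int_\Omega \bigl(f(u_1)-f(u_0)\bigr) w_+ \d x \leq \int_\Omega f'(u_1) w_+^2 \d x,
\end{equation*}
strictly whenever $|\{u_1 > u_0\}|>0$. Applying the stability of $u_1$ with test function $w_+$ (again by a density argument from $C^\infty_c(\Omega)$) provides $\int_\Omega f'(u_1) w_+^2 \d x \leq [w_+]^2_{H^\s_\Omega}$. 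Chaining the three inequalities forces $|\{u_1 > u_0\}| = 0$, i.e., $u_1 \leq u_0$ almost everywhere. By symmetry, repeating the argument with $w_-$ and the stability of $u_0$ gives $u_0 \leq u_1$, and hence $u_0 = u_1$.

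The main technical obstacle is justifying that $w_+$ is an admissible test function both in the variational identity and in the stability inequality, since $w_+ \in H^\s_{\Omega,0}$ need not be bounded nor compactly supported in $\Omega$. I would handle this via a truncation-and-cutoff approximation $\min(w_+, k)\chi_K$, with $\chi_K \in C^\infty_c(\Omega)$ increasing to $1$ on $\Omega$ and $k \to \infty$, applying the identities and inequalities to these smooth compactly supported tests and passing to the limit by monotone convergence in the quadratic term $\int_\Omega f'(u_1)\xi^2 \d x$ (using that $u_1$ is bounded by below and $f$ is convex, so that $f'(u_1)$ is bounded below and its positive and negative parts can be treated separately), and by dominated convergence for the right-hand side, with the required integrability provided by $f(u_i)\in L^1(\Omega, d_\Omega^\s \d x)$ from the $L^1$-weak formulation together with $u_i \in H^\s_\Omega$.
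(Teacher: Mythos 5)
Your argument is correct and takes a genuinely different route from the paper's. The paper splits the proof into two modular steps: \Cref{Prop:OrderStableSolutions} first shows (using only plain convexity and a strong maximum principle for the linearized equation) that any two such solutions are \emph{ordered}, and then \Cref{Prop:UniquenessIfNotEig} shows (using strict convexity) that an ordered pair with the upper one stable must coincide. You bypass the strong maximum principle entirely: you test the difference of the equations against $w_+=(u_1-u_0)^+$, use the pointwise inequality giving $[w_+]^2_{H^\s_\Omega}\le \langle w,w_+\rangle_{H^\s_\Omega}$ (the paper's inequality \eqref{Eq:PositivePartHs}), then immediately exploit \emph{strict} convexity to make the convexity step strict on $\{u_1>u_0\}$, and close the chain with stability. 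This merges Propositions \ref{Prop:OrderStableSolutions} and \ref{Prop:UniquenessIfNotEig} into a single contradiction argument and is shorter. The trade-off is that it requires both solutions to be stable (you use stability of $u_1$ to kill $w_+$ and, by symmetry, stability of $u_0$ to kill $w_-$), whereas the paper's ordering result \Cref{Prop:OrderStableSolutions} holds with only one stable solution and non-strict convexity, and is reused elsewhere (e.g.\ in \Cref{Prop:UniquenessIfNotEig} and \Cref{Remark:StrictConvexity}). Your handling of the admissibility of $w_+$ as a test function --- approximation, Fatou for the stability quadratic form using $f'(u_1)^-\in L^\infty$, dominated convergence for the nonlinear term using the integrability of $(f(u_1)-f(u_0))w_+$ --- is precisely what the paper does in \Cref{Remark:StabilityHs} and in the claim \eqref{Eq:fu-fvL1} inside the proof of \Cref{Prop:OrderStableSolutions}, so that part is sound though it would deserve to be written out in full.
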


Our next result states that when $u$ vanishes in $\R^n \setminus \Omega$ and $f(0)=0$, then $u\equiv0$ is the only possible nonnegative stable solution.
We prove it in \Cref{Sec:Classification}.

\begin{theorem}
	\label{Th:Classiff(0)=0}
	Let $\Omega\subset \R^n$ be a bounded smooth domain of $\R^n$ and let $f\in C^1(\R)$ be a strictly convex function with $f(0)=0$.
	Let $u\in H^\s_{\Omega}$ be a nonnegative $L^1$-weak stable solution to 
	\begin{equation}
		\label{Eq:SemilinearProblemZero}
		\beqc{\PDEsystem}
		\fraclaplacian u & = & f(u) & \text{ in } \Omega,\\
		u & = & 0 & \text{ in } \R^n\setminus \Omega.
		\eeqc
	\end{equation}
	
	Then, $u \equiv 0$ and $f'(0) \leq \lambda_1$, where $\lambda_1 >0$ is the first eigenvalue of $\fraclaplacian$ in $\Omega$ with zero exterior Dirichlet condition.	
\end{theorem}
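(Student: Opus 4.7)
The plan is to proceed in two steps: first show $f'(0) \leq \lambda_1$ by testing the stability inequality with the first Dirichlet eigenfunction of $\fraclaplacian$, and then conclude $u \equiv 0$ by comparing $u$ with the zero solution via the uniqueness result \Cref{Th:UniquenessStableNonlinear}.

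For the first step, let $\phi_1 \in H^\s_{\Omega,0}$ be the (positive, bounded) first Dirichlet eigenfunction of $\fraclaplacian$ in $\Omega$, normalized so that $[\phi_1]^2_{H^\s_\Omega} = \lambda_1 \norm{\phi_1}_{L^2(\Omega)}^2$. Since $f$ is convex, $f'$ is nondecreasing, and because $u \geq 0$ we have $f'(u) \geq f'(0)$ a.e.\ in $\Omega$. Using $\phi_1$ as a test function in the stability inequality \eqref{Eq:Stability}---justified by density, see below---we obtain
\begin{equation*}
f'(0) \norm{\phi_1}_{L^2(\Omega)}^2 \;\leq\; \int_\Omega f'(u)\,\phi_1^2 \d x \;\leq\; [\phi_1]^2_{H^\s_\Omega} \;=\; \lambda_1 \norm{\phi_1}_{L^2(\Omega)}^2,
\end{equation*}
which, since $\norm{\phi_1}_{L^2(\Omega)} > 0$, yields $f'(0) \leq \lambda_1$.

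Given this inequality, the function $v \equiv 0$ is trivially an $L^1$-weak solution to \eqref{Eq:SemilinearProblemZero} (because $f(0)=0$) belonging to $H^\s_{\Omega,0}$, and it is stable: for every $\xi \in C^\infty_c(\Omega)$ the variational characterization of $\lambda_1$ gives
\begin{equation*}
\int_\Omega f'(0)\,\xi^2 \d x \;=\; f'(0) \norm{\xi}_{L^2(\Omega)}^2 \;\leq\; \lambda_1 \norm{\xi}_{L^2(\Omega)}^2 \;\leq\; [\xi]^2_{H^\s_\Omega}.
\end{equation*}
Thus both $u$ and $0$ are $L^1$-weak stable solutions to \eqref{Eq:SemilinearProblemZero} which lie in $H^\s_\Omega$ and are bounded from below, so \Cref{Th:UniquenessStableNonlinear} forces $u \equiv 0$.

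The main technical obstacle is justifying the use of $\phi_1$, which is not compactly supported in $\Omega$, as a test function in the stability inequality (formulated for $\xi \in C^\infty_c(\Omega)$). The natural route is to approximate $\phi_1$ by $\phi_1 \eta_k$ with $\eta_k \in C^\infty_c(\Omega)$ an increasing cutoff (mollifying if necessary to remain in $C^\infty_c(\Omega)$), and pass to the limit. The convergence $[\phi_1 \eta_k]_{H^\s_\Omega} \to [\phi_1]_{H^\s_\Omega}$ follows from density of $C^\infty_c(\Omega)$ in $H^\s_{\Omega,0}$. On the other side, the inequality $f'(u) \geq f'(0)$ shows that the negative part $f'(u)_-$ is uniformly bounded by $|f'(0)|$, so together with $\phi_1 \in L^\infty$ one gets dominated convergence on the negative part and monotone convergence on the positive part, identifying $\int f'(u)(\phi_1\eta_k)^2 \to \int f'(u)\phi_1^2$ (finite a posteriori from the stability bound).
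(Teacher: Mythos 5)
Your proof is correct but structured differently from the paper's. The paper first shows $u\equiv 0$ by applying \Cref{Prop:UniquenessIfNotEig} directly with $v\equiv 0$ (which only requires \emph{one} of the two solutions to be stable, so there is no need to verify that $0$ is stable), and only afterwards derives $f'(0)\leq\lambda_1$ by applying \Cref{Lemma:SuperlinearNonlinearityZeroSolNotStable} to the zero solution. You reverse the order: you extract $f'(0)\leq\lambda_1$ \emph{first}, directly from the stability of $u$, by lower-bounding $f'(u)\geq f'(0)$ via convexity and $u\geq 0$ and then testing against $\phi_1$ (this is the same eigenfunction computation that appears at the end of the proof of \Cref{Lemma:SuperlinearNonlinearityZeroSolNotStable}, but applied to $u$ rather than to the zero function). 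You then use this to show that $v\equiv 0$ is itself a stable solution, so that the weaker uniqueness statement \Cref{Th:UniquenessStableNonlinear}, which requires both solutions to be stable, applies. What your route buys is that it bypasses \Cref{Lemma:SuperlinearNonlinearityZeroSolNotStable} entirely — in particular you avoid the strong-maximum-principle and harmonic-extension comparison in that lemma's proof — at the modest cost of having to check that $0$ is stable. The justification of $\phi_1$ as an admissible test function in the stability inequality is exactly the content of \Cref{Remark:StabilityHs}, which you essentially reproduce; one could also simply cite that remark rather than redoing the cutoff argument.
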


Finally, we present our approximation result for stable solutions.
It is proved in \Cref{Sec:Approximation}.

\begin{theorem}
	\label{Th:HsAppConvex}
	Let $\Omega\subset \R^n$ be a bounded smooth domain, and let $f\in C^1(\R)$ be a strictly convex function.
	Let $u$ be a stable $L^1$-weak solution to \eqref{Eq:SemilinearProblem} for some $g$ as in \eqref{Eq:Assumptionsg}.
	Assume that $u\in H^\s_\Omega$, that $u\geq -M$ in $\R^n$ for some $M$, and that $f(-M) \geq 0$. 
	
	Then, there exists a nondecreasing sequence $\{f_k\}_{k \in \N} \subset C^1([-M,+\infty))$ of globally Lipschitz convex nonlinearities converging pointwise to $f$ in $[-M,+\infty)$, and a sequence $\{u_k\}_{k \in \N}$ of bounded stable solutions to
	\begin{equation}
		\label{Eq:SemilinearPbHsAppfkConvex}
		\beqc{\PDEsystem}
		\fraclaplacian u_k & = & f_k(u_k)& \text{ in } \Omega,\\
		u_k & = & g& \text{ in } \R^n\setminus \Omega,
		\eeqc
	\end{equation}
	such that $-M \leq u_k \leq u$ in $\R^n$ and such that $u_k \to u$ in $H^\s_\Omega$  as $k\to +\infty$.
	
	Moreover, if $f$ is nonnegative, then all $f_k$ are nonnegative as well, and if $f\in C^{1,\gamma}(\R)$ for some $\gamma \in (0,1)$, then $f_k\in C^{1,\gamma}(\R)$ for all $k\in \N$.
\end{theorem}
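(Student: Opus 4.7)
The plan is to regularize $f$ by replacing it with its tangent line above the level $k$, to construct each $u_k$ as the minimizer of the associated energy on an obstacle-type convex set (so that $-M \leq u_k \leq u$ and stability are automatic), and to combine \Cref{Th:UniquenessStableNonlinear} with a weak-lower-semicontinuity argument to upgrade the resulting monotone pointwise convergence $u_k \nearrow u$ to strong convergence in $H^\s_\Omega$.

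First, I would define $f_k$ on $[-M,+\infty)$ by $f_k(t) := f(t)$ for $t \leq k$ and $f_k(t) := f(k) + f'(k)(t-k)$ for $t > k$, the tangent-line extension of $f$ at $k$. Convexity of $f$ immediately gives that each $f_k$ is convex, $C^1$, globally Lipschitz, satisfies $f_k \leq f$ pointwise, and that the sequence is nondecreasing with $f_k \to f$ pointwise on $[-M,+\infty)$; preservation of $C^{1,\gamma}$ regularity is immediate, and nonnegativity is preserved once $k$ is large enough (a minor variant covering the bounded-nonnegative case). With $F_k$ a primitive of $f_k$, I would then obtain $u_k$ as a minimizer of
\begin{equation*}
	\mathcal{E}_k[w] := \tfrac{1}{2}\,[w]_{H^\s_\Omega}^2 - \int_\Omega F_k(w) \,\mathrm{d}x
\end{equation*}
on the closed convex set $\mathcal{C} := \{w \in H^\s_\Omega : w = g \text{ in } \R^n \setminus \Omega,\ -M \leq w \leq u \text{ a.e.\ in } \Omega\}$, which is nonempty since it contains $u$; the direct method applies thanks to the at-most-quadratic growth of $F_k$ and weak lower semicontinuity of the seminorm. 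The obstacle constraints are non-binding in the Euler--Lagrange sense, because $u$ is a supersolution of the $f_k$-problem (since $f \geq f_k$) while the function equal to $-M$ in $\Omega$ and to $g \geq -M$ outside is a subsolution (since $f_k(-M)=f(-M)\geq 0$); a standard truncation-testing argument therefore gives that $u_k$ solves the unconstrained Euler--Lagrange equation, and \Cref{Remark:BoundedSolutions} promotes it to a pointwise and energy solution. Stability of $u_k$ is then just the non-negativity of the second variation of $\mathcal{E}_k$ at the minimizer.

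Because $u_{k+1}$ lies in the admissible class for $f_k$ and $\mathcal{E}_k[u_{k+1}] \leq \mathcal{E}_k[u]$ by the same argument, a comparison-type reasoning in the variational formulation yields $u_k \leq u_{k+1} \leq u$; hence $u_k \nearrow u_\infty \leq u$ a.e., with $L^2(\Omega)$ convergence by dominated convergence (majorant $|u|+M \in L^2$). I would then pass to the limit in the $L^1$-weak formulation and in the stability inequality: Fatou applies to the positive part of $f_k'(u_k)\xi^2$ (uniformly controlled by the stability constant $[\xi]^2_{H^\s_\Omega}$), and dominated convergence applies to the negative part (bounded by $|f'(-M)|$, since $f'$ is nondecreasing). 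The limit $u_\infty \in H^\s_\Omega$ is therefore a stable $L^1$-weak solution of \eqref{Eq:SemilinearProblem} with $u_\infty \geq -M$ and exterior datum $g$, so \Cref{Th:UniquenessStableNonlinear} forces $u_\infty = u$.

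The strong $H^\s_\Omega$ convergence would then come from the minimality inequality $\mathcal{E}_k[u_k] \leq \mathcal{E}_k[u]$, which rewrites as
\begin{equation*}
	\tfrac{1}{2}\,[u_k]_{H^\s_\Omega}^2 \leq \tfrac{1}{2}\,[u]_{H^\s_\Omega}^2 + \int_\Omega \bigl(F_k(u_k) - F_k(u)\bigr)\,\mathrm{d}x .
\end{equation*}
Splitting the right-hand side according to whether $u(x) \leq k$ (where $F_k \equiv F$ on the whole relevant range, with a direct bound by $|F(-M)|+|F(u)|$) or $u(x) > k$ (a set of measure $\to 0$ since $u \in L^2$) provides an $L^1$ majorant, so dominated convergence gives $\limsup_k [u_k]_{H^\s_\Omega}^2 \leq [u]_{H^\s_\Omega}^2$. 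Uniform $H^\s_\Omega$-boundedness then extracts a weakly convergent subsequence whose weak limit must equal $u$ by the $L^2$ identification, weak lower semicontinuity yields the matching lim inf inequality, and the norm-plus-weak-convergence principle in the Hilbert space $H^\s_\Omega$ delivers strong convergence $u_k \to u$ for the full sequence. The main technical obstacle is precisely this last domination step (and the analogous one in the identification of $u_\infty$): because $u$ need not be bounded and $\Lip(f_k) \to \infty$, the cleanest approach is to exploit the splitting $\Omega = \{u \leq k\} \cup \{u > k\}$, on the first of which $f_k$ and $F_k$ coincide with $f$ and $F$, and on the second of which the measure decays fast enough to absorb the growth of $\Lip(f_k)$.
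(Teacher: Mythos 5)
Your construction of $f_k$ by tangent-line truncation and the overall plan (build $u_k$, show uniform $H^\s$ bounds, pass to the limit, identify the limit with $u$ via uniqueness) match the paper; but where the paper constructs $u_k$ by the monotone iteration method (\Cref{Prop:MonotoneIteration}) with $-M$ as subsolution and $u$ as supersolution, you instead minimize $\mathcal{E}_k$ over an obstacle-type convex set. This is a genuinely different route and, although it can be made to work, it leaves one serious gap and introduces several technicalities that the paper's route avoids.

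The gap: you never establish that $u_k$ is \emph{bounded}, yet the theorem explicitly produces ``a sequence $\{u_k\}$ of bounded stable solutions'', and your own appeal to \Cref{Remark:BoundedSolutions} requires $u_k\in L^\infty(\Omega)$ as an input. Your obstacle constraint only gives $-M\le u_k\le u$, and $u$ is merely in $H^\s_\Omega$ and may well be unbounded (this is precisely the situation of interest). The paper closes this using a bootstrap: since $f_k$ is globally Lipschitz and bounded below, $f_k(u_k)\le f_k(u)\le C(1+u)\in L^1(\Omega)$, and iterating \Cref{Prop:RegularityLp} lifts $u_k$ from $L^1$ through $L^p$ to $L^\infty$. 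Without this step, regularity, the variational formulation, and the passage to the pointwise setting are all unavailable for your $u_k$. A second, related weakness is the phrase ``a standard truncation-testing argument therefore gives that $u_k$ solves the unconstrained Euler--Lagrange equation'': with an upper obstacle $u$ that is not assumed bounded or continuous, this is considerably more delicate than in the textbook setting, and the non-negativity of the second variation of a \emph{constrained} minimizer at a point possibly touching the obstacle is not automatic. (The paper avoids all of this: stability of $u_k$ follows in one line from convexity, $f_k'(u_k)\le f_k'(u)\le f'(u)$ together with stability of $u$.) Third, your monotonicity claim ``comparison-type reasoning in the variational formulation yields $u_k\le u_{k+1}$'' is asserted but not an argument; the paper gets it for free from the minimality property of the monotone-iteration solution ($u_{k+1}$ is a supersolution of the $f_k$-problem). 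Finally, two smaller remarks: you invoke \Cref{Th:UniquenessStableNonlinear} (requiring the limit to be stable, hence the extra Fatou argument) where the paper's \Cref{Prop:UniquenessIfNotEig} applies directly from $-M\le u_\infty\le u$ with only $u$ stable; and your claimed domination ``by $|F(-M)|+|F(u)|$'' is not justified, since $F(u)\in L^1(\Omega)$ does not follow from $u\in L^2(\Omega)$ for a general convex $f$ — the correct majorant, using $f_k\ge C_\circ$, is $|C_\circ|(|u|+M)$, which is indeed how the paper argues (testing the equation for $u_k$ with $u-u_k$ rather than going through $F_k$).
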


As mentioned before, the previous result can be combined with uniform a priori estimates for stable solutions to establish the regularity of $H^\s_\Omega$ stable solutions. 
In view of the main result of \cite{CabreSanzPerela-HalfLaplacian} (see \Cref{Th:Holder} below), our approximation theorem yields the following regularity result in low dimensions.

\begin{corollary}
	\label{Th:HolderWeak}
	Let $\Omega\subset \R^n$ be a bounded smooth domain, and let $f$ be a nonnegative strictly convex $C^{1,\gamma}$ function for some $\gamma >0$. 
	Let $u$ be a stable $L^1$-weak solution to \eqref{Eq:SemilinearProblem} for $\s=1/2$ and for some $g$ as in \eqref{Eq:Assumptionsg}.
	Assume that $u\in H^{1/2}_\Omega$ and that $u$ is bounded by below in $\R^n$.
	
	Then, $u\in C^{2,\delta}(\Omega)$ for some $\delta>0$ provided that $1 \leq n\leq 4$.
\end{corollary}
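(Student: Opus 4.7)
The plan is to combine the approximation result (Theorem \ref{Th:HsAppConvex}) with the universal a priori Hölder estimate for regular stable solutions of \cite{CabreSanzPerela-HalfLaplacian} (stated as Theorem \ref{Th:Holder}), and then bootstrap via interior regularity for the half-Laplacian.

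First, I would invoke Theorem \ref{Th:HsAppConvex}: since $u \geq -M$ and $f \geq 0$, the hypothesis $f(-M) \geq 0$ is automatic, so we obtain a nondecreasing sequence of nonnegative, convex, globally Lipschitz $C^{1,\gamma}$ nonlinearities $f_k$ converging pointwise to $f$ on $[-M,+\infty)$, together with bounded stable solutions $u_k$ to the corresponding semilinear Dirichlet problem \eqref{Eq:SemilinearPbHsAppfkConvex}, with $-M \leq u_k \leq u$ and $u_k \to u$ in $H^{1/2}_\Omega$. By Remark \ref{Remark:BoundedSolutions}, each $u_k$ is a classical solution, so it is a \emph{regular} stable solution to which Theorem \ref{Th:Holder} applies.

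Next, I would apply the universal Hölder estimate of Theorem \ref{Th:Holder} on every compactly contained subdomain $\Omega' \subset\subset \Omega$. Crucially, this estimate is valid in dimensions $1 \leq n \leq 4$ and is \emph{independent of the nonlinearity}; it controls $\|u_k\|_{C^\alpha(\Omega')}$ solely in terms of $n$, $\Omega'$, $\Omega$, and an $L^1$ (or analogous) norm of $u_k$. Since $-M \leq u_k \leq u$ and $u \in L^1(\Omega)$, such norms are uniformly bounded in $k$. Arzelà–Ascoli thus yields a subsequence converging locally uniformly in $\Omega$ to some $u_\infty \in C^\alpha_{\loc}(\Omega)$. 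The $H^{1/2}_\Omega$-convergence of $u_k$ to $u$ forces $u_\infty = u$ a.e., so (up to modifying on a null set) $u \in C^\alpha_{\loc}(\Omega)$.

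Finally, I would bootstrap. Since $u$ is locally bounded and $f \in C^{1,\gamma}$, the composition $f(u)$ is locally $C^\alpha$ (with $\alpha$ possibly shrunk to $\min(\alpha,\gamma)$). As $u$ is already an $L^1$-weak solution to $(-\Delta)^{1/2}u = f(u)$ and $u \in H^{1/2}_\Omega$, Remark \ref{Remark:L1WeakSolWhichAreHs} together with the standard interior regularity for the fractional Laplacian (Corollaries~2.4 and 2.5 of \cite{RosOtonSerra-Regularity}, applied iteratively to $(-\Delta)^{1/2}u = f(u)$ with right-hand side of improving Hölder regularity) upgrades $u$ first to $C^{1,\alpha'}_{\loc}$ and then to $C^{2,\delta}_{\loc}(\Omega)$ for some $\delta > 0$.

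The main obstacle is ensuring that the constants in Theorem \ref{Th:Holder} depend neither on the Lipschitz constant of $f_k$ nor on any other quantity that could blow up as $k \to \infty$; this is precisely the content of the \emph{universal} nature of that estimate and is the reason Theorem \ref{Th:HsAppConvex} was designed to preserve convexity, nonnegativity, and the $C^{1,\gamma}$ class, while allowing the Lipschitz constant to diverge. Once uniformity is in hand, the convergence and bootstrap steps are standard.
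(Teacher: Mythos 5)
Your proof follows the paper's argument essentially step for step: apply Theorem~\ref{Th:HsAppConvex} to get regular stable approximants, apply the universal estimate of Theorem~\ref{Th:Holder} uniformly in $k$, pass to the limit by compactness of Hölder embeddings, and bootstrap. The only small imprecision is that the relevant norm in Theorem~\ref{Th:Holder} is $\norm{u_k}_{L^1_{1/2}(\R^n)}$ (a weighted norm over all of $\R^n$), so its uniform boundedness relies on $u\in L^1_{1/2}(\R^n)$ via the growth condition \eqref{Eq:Assumptionsg} on $g$, not merely on $u\in L^1(\Omega)$; and the $C^2$ smoothness needed to invoke Theorem~\ref{Th:Holder} comes from interior Schauder estimates using $f_k\in C^{1,\gamma}$, rather than from the mere classical-solution property.
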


\subsection{Organization of the paper:}
\Cref{Th:UniquenessStableNonlinear,Th:Classiff(0)=0} are proved respectively in \Cref{Sec:Uniqueness} and \Cref{Sec:Classification}.
Our main approximation result, \Cref{Th:HsAppConvex}, is established in \Cref{Sec:Approximation}, where we also prove another approximation theorem for stable solutions (\Cref{Th:HsAppEps}).
\Cref{Th:HolderWeak} is proved in \Cref{Sec:Applications} and, finally, in \Cref{Sec:Counterexamples} we provide a counterexample for our approximation result when we drop the assumption of $u$ belonging to $H^\s_\Omega$.

The article contains three appendices.
In \Cref{Sec:IVTh} we recall an ``intermediate value theorem'' for functions in fractional Sobolev spaces which is needed in the arguments of \Cref{Remark:StrictConvexity}.
\Cref{Sec:ExteriorCondition} is devoted to some comments on the regularity assumptions on the exterior condition $g$ needed to define each notion of solution.
Finally, in \Cref{Sec:L1Theory} we collect several results concerning $L^1$-weak solutions which are used through the paper.

\section{Uniqueness of stable solutions}
\label{Sec:Uniqueness}

In this section, we present several results concerning stable solutions to the semilinear problem \eqref{Eq:SemilinearProblem}.
We will conclude by proving our uniqueness result, \Cref{Th:UniquenessStableNonlinear}.

Through the paper we will consider $L^1$-weak solutions which are in $H^\s_{\Omega}$ (exploiting the variational formulation of the equation, see \Cref{Remark:L1WeakSolWhichAreHs}).
Furthermore, we will always assume that $f$ is convex.
Regarding the stability inequality \eqref{Eq:Stability}, through the section we intend to take test functions $\xi \in H^\s_{\Omega,0}$ ---not necessarily belonging to $C^\infty_c(\Omega)$.
As explained in the following remark, \eqref{Eq:Stability} holds as well for this wider class of test functions thanks to the convexity of~$f$.
Here and through the paper we denote positive and negative parts by $w^+ := \max \{w, 0\}$ and $w^- := \max \{-w, 0\}$ (and thus $w = w^+ - w^-$).

\begin{remark}
	\label{Remark:StabilityHs}
	Let us show that we can take $\xi \in H^\s_{\Omega,0}$ as a test function in the stability condition~\eqref{Eq:Stability}, at least when $f$ is convex and the stable solution $u$ is bounded below in $\Omega$.  
	To do it, it suffices to show that
	\begin{equation}
		\label{Eq:f'xi2Integrable}
		f'(u) \xi^2 \in L^1(\Omega) \quad \text{ for all } \xi \in H^\s_{\Omega,0},
	\end{equation} 
	and use an approximation argument.
	Note first that since $f$ is convex and $u\geq -M$ in $\Omega$ for some $M$, we have $f'(-M) \leq f'(u)$ in $\Omega$, which shows that $f'(u)^- \in L^\infty(\Omega)$, and therefore $f'(u)^- \xi^2 \in L^1(\Omega) $ for every $ \xi \in H^\s_{\Omega,0}$.
	Now, from the stability of $u$ we have
	\begin{equation}
		\label{Eq:StabilityfMinus}
		\int_\Omega f'(u)^+ \xi^2 \d x \leq [\xi]^2_{H^\s_{\Omega}} + \int_\Omega f'(u)^- \xi^2 \d x \quad \text{ for every } \xi \in C^\infty_c(\Omega).
	\end{equation}
	Then, for every $\xi \in H^\s_{\Omega,0}$, let $\{\xi_k\}_{k\in \N}\subset C^\infty_c(\Omega)$ be a sequence converging to $\xi$ in $H^\s_{\Omega,0}$ as $k \to +\infty$.
	Taking $\xi = \xi_k$ in \eqref{Eq:StabilityfMinus}, and using Fatou's lemma, by letting $k\to +\infty$ we obtain that \eqref{Eq:StabilityfMinus} holds for every $\xi \in H^\s_{\Omega,0}$.
	This yields that $f'(u)^+ \xi^2 \in L^1(\Omega)$ for every $\xi \in H^\s_{\Omega,0}$,	establishing \eqref{Eq:f'xi2Integrable} and, as a byproduct, showing that the stability condition \eqref{Eq:Stability} holds for all $\xi \in H^\s_{\Omega,0}$.
\end{remark}

Our first result in this section states that if the nonlinearity $f$ is convex, then a bounded by below stable solution cannot cross another solution which is also bounded by below.

\begin{proposition}
	\label{Prop:OrderStableSolutions}
	Let $\Omega\subset \R^n$ be a bounded smooth domain and let $f\in C^1(\R)$ be a convex function.
	Let $u,v\in H^\s_\Omega$ be two $L^1$-weak  solutions to the problem \eqref{Eq:SemilinearProblem}, for some exterior condition $g$ as in \eqref{Eq:Assumptionsg}.
	Assume that both $u$ and $v$ are bounded by below in $\Omega$ and that at least one solution is stable in $\Omega$.
	
	Then, $u$ and $v$ are ordered: either $u<v$ in $\Omega$, $u\equiv v$ in $\Omega$, or $u>v$ in $\Omega$.
\end{proposition}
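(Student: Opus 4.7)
The plan is to test the difference of the equations satisfied by $u$ and $v$ against $w^+ := (u-v)^+ \in H^\s_{\Omega,0}$, combine convexity of $f$ with stability of (say) $u$ to close a chain of inequalities into equalities, and then invoke a strong maximum principle to upgrade the resulting dichotomy to the announced trichotomy.

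Set $w := u-v$ and, without loss of generality, assume $u$ is stable. Using $\varphi = w^+$ in the weak formulations \eqref{Eq:L1WeakFormulationHs} of the equations for $u$ and $v$ ---admissible via \Cref{Remark:L1WeakSolWhichAreHs} once the integrability of $(f(u)-f(v))w^+$ is checked through the $L^1$-theory of \Cref{Sec:L1Theory}--- and subtracting yields
\begin{equation*}
	\langle w, w^+ \rangle_{H^\s_\Omega} = \int_\Omega \big( f(u) - f(v) \big) w^+ \d x.
\end{equation*}
A direct case analysis on the signs of $w(x), w(y)$ gives the pointwise identity
\begin{equation*}
	\big( w(x)-w(y) \big)\big( w^+(x)-w^+(y) \big) = \big( w^+(x)-w^+(y) \big)^2 + w^+(x)w^-(y) + w^-(x)w^+(y),
\end{equation*}
which, after integration against $\tfrac{c_{n,\s}}{2}|x-y|^{-n-2\s}$, produces
\begin{equation*}
	\langle w, w^+ \rangle_{H^\s_\Omega} = [w^+]^2_{H^\s_\Omega} + c_{n,\s}\dint_{\R^{2n}}\frac{w^+(x)w^-(y)}{|x-y|^{n+2\s}} \d x \d y.
\end{equation*}
On the other side, the tangent-line inequality $f(u)-f(v) \le f'(u)(u-v)$ on $\{u \ge v\}$ (from convexity) gives $(f(u)-f(v))w^+ \le f'(u)(w^+)^2$ a.e.\ in $\Omega$; combined with the stability of $u$ applied to $\xi = w^+ \in H^\s_{\Omega,0}$ (legitimate by \Cref{Remark:StabilityHs}), this yields $\int_\Omega (f(u)-f(v)) w^+ \d x \le [w^+]^2_{H^\s_\Omega}$.

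Comparing both chains forces the nonnegative cross integral to vanish, so $w^+(x)w^-(y) = 0$ for a.e.\ $(x,y)$; by Tonelli this means either $w^+ \equiv 0$ or $w^- \equiv 0$ in $\Omega$, i.e.\ $u \le v$ or $u \ge v$ in $\Omega$. To upgrade to the trichotomy, I would linearize by writing $f(u) - f(v) = c(x)(u-v)$ with
\begin{equation*}
	c(x) := \int_0^1 f'\big( t u(x) + (1-t) v(x) \big) \d t,
\end{equation*}
so that $w$ solves $(-\Delta)^\s w - c(x) w = 0$ in $\Omega$ with $w \equiv 0$ outside and of one sign in $\Omega$. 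A strong maximum principle for the fractional Laplacian ---available in the $L^1$-weak framework via the continuity and comparison tools collected in \Cref{Sec:L1Theory}--- then yields either $w \equiv 0$ or strict sign of $w$ throughout $\Omega$.

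The main obstacle I anticipate is the technical justification of $w^+$ as a test function: $f(u) w^+ \in L^1(\Omega)$ does not follow trivially from $f(u) \in L^1(\Omega, d_\Omega^\s \d x)$ and $w^+ \in H^\s_{\Omega,0}$. I expect this to be handled by an approximation $\xi_k \in C^\infty_c(\Omega)$ converging to $w^+$ in $H^\s_{\Omega,0}$, combined with Fatou-type arguments in the spirit of \Cref{Remark:StabilityHs} and the boundary decay estimates from \Cref{Sec:L1Theory}; the final strong maximum principle step is the other delicate point, since we only know $u, v \in H^\s_\Omega$ and bounded below, not bounded above, so $c(x)$ need not be in $L^\infty(\Omega)$.
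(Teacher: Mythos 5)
Your identity
\[
	\bigl( w(x)-w(y) \bigr)\bigl( w^+(x)-w^+(y) \bigr) = \bigl( w^+(x)-w^+(y) \bigr)^2 + w^+(x)w^-(y) + w^-(x)w^+(y)
\]
gives a genuine and somewhat cleaner alternative to the paper's route for obtaining the preliminary dichotomy $u\le v$ or $u\ge v$ in $\Omega$. The paper instead observes that the equality chain forces $(u-v)^+$ to minimize the nonnegative quadratic form $w\mapsto [w]^2_{H^\s_\Omega}-\int_\Omega f'(u)w^2\,\d x$ on $H^\s_{\Omega,0}$, hence to solve $(-\Delta)^\s(u-v)^+ = f'(u)(u-v)^+$ as an energy solution, and then applies the strong maximum principle directly to $(u-v)^+$, so the dichotomy and the strict alternative come out in one pass. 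Your approach decouples the two: the algebraic identity yields the sign dichotomy without any Euler--Lagrange step, and a separate linearization plus strong maximum principle handles strictness. Both are correct; yours avoids the variational characterization entirely. The integrability of $(f(u)-f(v))w^+$ needed to make $w^+$ admissible as a test function is verified in the paper by sandwiching $(f(u)-f(v))(u-v)^+$ between $f'(-M)\,|(u-v)^+|^2$ and $f'(u)\,|(u-v)^+|^2$ (convexity and the lower bound), then invoking \eqref{Eq:f'xi2Integrable} from \Cref{Remark:StabilityHs}; that is precisely the kind of argument you anticipate.

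The one genuine gap is the point you flag at the end and leave open: the coefficient $c(x)=\int_0^1 f'\bigl(tu(x)+(1-t)v(x)\bigr)\d t$ is not known to be bounded, while \Cref{Prop:StrongMaxPple} requires an $L^\infty$ zeroth-order coefficient. The fix is the same device the paper applies to $f'(u)$: split $c=c^+-c^-$. By convexity $f'$ is nondecreasing and, since $u,v\ge -M$ a.e.\ in $\Omega$, the segment $tu+(1-t)v$ stays $\ge -M$, so $c\ge f'(-M)$ pointwise and therefore $c^-\le |f'(-M)|\in L^\infty(\Omega)$. Assuming (say) $w\ge 0$ in $\Omega$, rewrite the linearized equation as $(-\Delta)^\s w + c^- w = c^+ w \ge 0$ in $\Omega$ and apply the strong maximum principle with the bounded coefficient $c^-$, absorbing $c^+w$ into the right-hand side as a nonnegative source; the case $w\le 0$ is symmetric. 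With this observation spelled out your argument closes.
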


\begin{proof}
	Without loss of generality we will assume that $u$ is stable.
	We will also use through the proof that $u\geq -M$ and $v\geq -M$ in $\Omega$ for some $M$.
	
	First, using that for every two numbers $a,b\in \R$, it holds 
	\begin{equation}
		\label{Eq:PositivePartHs}
		|a^+ - b^+|^2 \leq (a - b)(a^+ - b^+) \leq |a-b|^2,
	\end{equation}
	it follows that $(u-v)^+\in H^\s_\Omega$.
	Moreover, since both $u$ and $v$ agree outside $\Omega$,  $(u-v)^+\in H^\s_{\Omega,0}$.
	
	Now, we claim that
	\begin{equation}
		\label{Eq:fu-fvL1}
		\big( f(u) - f(v) \big) (u - v)^+ \in L^1(\Omega).
	\end{equation}
	To show this, note that from the convexity of $f$ and the lower bound of $v$ we have
	\begin{equation}
		f'(-M) |(u-v)^+|^2 \leq f'(v) (u-v)(u-v)^+ \leq \big( f(u) - f(v) \big) (u-v)^+
	\end{equation}
	and
	\begin{equation}
		\big( f(u) - f(v) \big) (u-v)^+ \leq   f'(u) (u-v)(u-v)^+ \leq f'(u) |(u-v)^+|^2.
	\end{equation}
	Using that $(u-v)^+\in H^\s_{\Omega,0}$ and \eqref{Eq:f'xi2Integrable}, the claim follows.

	Note that $u-v\in H^\s_{\Omega,0}$ is an $L^1$-weak solution to
	\begin{equation}
		\beqc{\PDEsystem}
		\fraclaplacian (u-v) & = &  f(u) - f(v) & \text{ in } \Omega, \\
		u - v & = & 0& \text{ in } \R^n\setminus \Omega.
		\eeqc
	\end{equation}
	Testing the above equation with $(u-v)^+\in H^\s_{\Omega,0}$ (thanks to \eqref{Eq:fu-fvL1} and taking into account \Cref{Remark:L1WeakSolWhichAreHs}), and using \eqref{Eq:PositivePartHs}, we get
	\begin{equation}
		[(u-v)^+]^2_{H^\s_\Omega} \leq \langle u - v, (u-v)^+ \rangle_{H^\s_\Omega} = \int_{\Omega} \big( f(u) - f(v) \big) (u-v)^+ \d x .
	\end{equation}
	Combining this inequality with the stability of $u$ (taking $\xi = (u-v)^+\in H^\s_{\Omega,0}$ as a test function in \eqref{Eq:Stability}, see \Cref{Remark:StabilityHs}), we obtain
	\begin{equation}
		\begin{split}
			\int_\Omega f'(u) |(u-v)^+|^2 \d x &\leq  [(u-v)^+]^2_{H^\s_\Omega}  \\
			&\leq \int_{\Omega} \big( f(u) - f(v) \big) (u-v)^+ \d x \\
			& \leq \int_{\Omega} f'(u) |(u-v)^+|^2  \d x,
		\end{split}	
	\end{equation}
	where in this last inequality we have used the convexity of $f$.
	As a consequence, we have that
	\begin{equation}
		[(u-v)^+]^2_{H^\s_\Omega} - \int_{\Omega} f'(u) |(u-v)^+|^2  \d x = 0.
	\end{equation}

	Since $u$ is stable, the functional 
	\begin{equation}
		w \mapsto [w]^2_{H^\s_\Omega} - \int_{\Omega} f'(u) w^2  \d x 
	\end{equation}
	is nonnegative in $H^\s_{\Omega,0}$, and therefore $(u-v)^+$ is a minimizer.
	Hence, it is an energy solution (and in particular $L^1$-weak solution) to
	\begin{equation}
		\beqc{\PDEsystem}
		\fraclaplacian (u-v)^+ &=& f'(u) (u-v)^+ &\text{ in } \Omega,\\
		(u-v)^+ & = & 0 & \text{ in } \R^n\setminus \Omega.
		\eeqc
	\end{equation}
	
	Writing $f'(u) = f'(u)^+ - f'(u)^-$, we have (in the $L^1$-weak sense)
	\begin{equation}
		\fraclaplacian (u-v)^+ + f'(u)^- (u-v)^+ = f'(u)^+ (u-v)^+ \geq 0 \quad \text{ in } \Omega.
	\end{equation}
	Then, by the strong maximum principle (we use \Cref{Prop:StrongMaxPple} taking into account that $f'(u)^- \in L^\infty(\Omega)$, since $f'(t) \geq f'(-M)$ for all $t\geq -M$ by convexity) we deduce that either $u > v$ in $\Omega$, or $u \leq v$ in $\Omega$. 
	In this second case, we have (in the $L^1$-weak sense again)
	\begin{equation}
		\fraclaplacian (v-u) = f(v) - f(u) \geq f'(u) (v - u) \quad \text{ in } \Omega,
	\end{equation}
	and as before, using now that $v\geq u$ in $\Omega$, we get
	\begin{equation}
		\fraclaplacian (v-u) + f'(u)^- (v-u) = f'(u)^+ (v-u) \geq 0 \quad \text{ in } \Omega.
	\end{equation}
	The strong maximum principle yields that either $u \equiv v$  in $\Omega$, or $u < v$  in $\Omega$.	
\end{proof}

Next, we show that if $f$ is strictly convex then stable solutions are minimal in the sense that  below a stable solution there cannot exist another solution bounded by below.
After the proof, in \Cref{Remark:StrictConvexity} below, we make some comments on the strict convexity assumption.

\begin{proposition}
	\label{Prop:UniquenessIfNotEig}
	Let $\Omega\subset \R^n$ a bounded smooth domain of $\R^n$ and let $f\in C^1(\R)$ be a strictly convex function.
	Let $u, v\in H^\s_\Omega$ be two $L^1$-weak solutions to \eqref{Eq:SemilinearProblem} for some exterior condition $g$ as in \eqref{Eq:Assumptionsg}.
	Assume that $-M \leq v \leq u$ in $\Omega$ for some $M$ and that $u$ is stable.
	
	Then, $u \equiv v$.
\end{proposition}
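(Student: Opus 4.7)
The plan is to set $w := u - v \geq 0$ in $\Omega$ and to show that $w \equiv 0$. Since $u$ and $v$ share the same exterior datum $g$, we have $w \equiv 0$ in $\R^n \setminus \Omega$, and arguing exactly as in the proof of \Cref{Prop:OrderStableSolutions} one checks that $w \in H^\s_{\Omega,0}$. Moreover, $w$ is an $L^1$-weak solution of $\fraclaplacian w = f(u) - f(v)$ in $\Omega$ with zero exterior condition.

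The next step is to insert $w$ itself as a test function in the weak formulation of this linear equation (as allowed by \Cref{Remark:L1WeakSolWhichAreHs}). For this I need $(f(u)-f(v))\,w \in L^1(\Omega)$, which I would deduce from the convexity sandwich
\begin{equation*}
f'(v)\,w^2 \,\leq\, \big(f(u) - f(v)\big)\,w \,\leq\, f'(u)\,w^2,
\end{equation*}
combined with the lower bound $f'(v) \geq f'(-M)$ (by convexity and $v\geq -M$) and the fact that $f'(u)\,w^2 \in L^1(\Omega)$, the latter coming from \Cref{Remark:StabilityHs} applied to $\xi = w \in H^\s_{\Omega,0}$. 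The tested equation then reads
\begin{equation*}
[w]^2_{H^\s_\Omega} = \int_\Omega \big(f(u) - f(v)\big)\, w \d x.
\end{equation*}

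The heart of the argument is to chain this identity with the convexity bound $f(u) - f(v) \leq f'(u)\,w$ and with the stability of $u$ (invoked with the admissible test function $\xi = w$, again via \Cref{Remark:StabilityHs}) to get
\begin{equation*}
\int_\Omega f'(u)\,w^2 \d x \,\leq\, [w]^2_{H^\s_\Omega} \,=\, \int_\Omega \big(f(u)-f(v)\big)\,w \d x \,\leq\, \int_\Omega f'(u)\,w^2 \d x.
\end{equation*}
Hence all three quantities coincide and
\begin{equation*}
\int_\Omega \Big( f'(u)\,w - \big(f(u) - f(v)\big) \Big)\, w \d x = 0.
\end{equation*}

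To conclude, I would invoke the \emph{strict} convexity of $f$: the integrand above is pointwise non-negative by plain convexity, and $w \geq 0$, so it vanishes a.e.\ in $\Omega$. On the set $\{w > 0\}$ we have $u > v$, and strict convexity forces the strict inequality $f(u) - f(v) < f'(u)(u - v) = f'(u)\,w$, which contradicts the vanishing of the integrand unless $\{w > 0\}$ has measure zero. Thus $w \equiv 0$ a.e.\ in $\Omega$, i.e.\ $u \equiv v$. The main technical obstacle I anticipate is the bookkeeping required to legitimately insert $w$ both in the $L^1$-weak formulation and in the stability inequality, which is precisely what the integrability facts above (and \Cref{Remark:StabilityHs}) take care of; once this is in place, what remains is the clean matching-inequalities/strict-convexity dichotomy already used in \Cref{Prop:OrderStableSolutions}.
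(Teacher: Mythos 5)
Your proof is correct and follows essentially the same route as the paper: defining $w = u - v \in H^\s_{\Omega,0}$, testing the linear equation for $w$ against $w$, chaining with convexity and stability to force the matching inequalities, and then invoking strict convexity. The only small deviation is in the final step: the paper first applies the strong maximum principle (via \Cref{Prop:OrderStableSolutions}) to get $w>0$ everywhere in $\Omega$ before reading off the contradiction, whereas you deduce it directly from the a.e.\ vanishing of the pointwise nonnegative integrand on $\{w>0\}$, which is a mild but harmless streamlining.
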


\begin{proof}
	By contradiction, we assume that $u \not \equiv v$ in $\Omega$.
	Then, we define $w := u - v\in H^\s_{\Omega,0}$, which is an $L^1$-weak solution to
	\begin{equation}
		\beqc{\PDEsystem}
		\fraclaplacian w &=  &f(u) - f(v)& \text{ in } \Omega,\\
		w & = & 0 & \text{ in } \R^n\setminus \Omega.
		\eeqc
	\end{equation}
	Note that by the strong maximum principle (\Cref{Prop:OrderStableSolutions}), $w > 0$ in $\Omega$.
	Taking $w$ as a test function in the weak formulation of the above problem (arguing as in the proof of \Cref{Prop:OrderStableSolutions} and taking into account \Cref{Remark:L1WeakSolWhichAreHs}), we obtain
	\begin{equation}
		[w]^2_{H^\s_{\Omega}} = \langle w, w \rangle_{H^\s_\Omega} = \int_{\Omega} \big( f(u) - f(v) \big) w \d x.
	\end{equation}
	Using the convexity of $f$ and the fact that  $u$ is stable ---note that we can take $\xi = w$ as a test function in the stability condition, see \Cref{Remark:StabilityHs}---, we get
	\begin{equation}
		[w]^2_{H^\s_\Omega}  \leq  \int_{\Omega} \big( f(u) - f(v) \big) w \d x 	\leq \int_{\Omega} f'(u) w^2 \d x \leq 	[w]^2_{H^\s_\Omega}.
	\end{equation}
	This yields that 
	\begin{equation}
		\label{Eq:ProofStrictConvexityContrad}
		f(u) - f(v) = f'(u)(u - v) \quad \textrm{ in } \Omega,
	\end{equation}
	which is a contradiction with the strict convexity of $f$ (since $u\not\equiv v$).
\end{proof}

\begin{remark}
	\label{Remark:StrictConvexity}
	If we relax the strict convexity assumption in the previous lemma to allow non strict convex nonlinearities, the consequence of the previous result may not be true.
	Indeed, if $f(u) = a + \lambda_1 u$ with $a\in \R$ and $\lambda_1$ being the first Dirichlet eigenvalue of $\fraclaplacian$ in $\Omega$, then the function $w$ defined in the previous proof could be (a positive multiple of) the first Dirichlet eigenfunction of $\fraclaplacian$ in $\Omega$.
	
	Let us show now that, at least if $\s \geq 1/2$, this eigenvalue case is the only obstruction when relaxing the strictness in the convexity assumption.
	More precisely, assuming $f$ (non strictly) convex and $w := u - v \not \equiv 0$, we will prove that \eqref{Eq:ProofStrictConvexityContrad} does not yield a contradiction, but it entails that $f(t) = a + \lambda_1 t$ for all $t \in (\inf_\Omega v, \sup_\Omega u)$, for some $a\in \R$ and with $\lambda_1$ being the first Dirichlet eigenvalue of $\fraclaplacian$ in $\Omega$.
	This will lead to $u-v$ being any (positive multiple of the) first Dirichlet eigenfunction of $\fraclaplacian$ in $\Omega$.
	
	To start with, we claim that \eqref{Eq:ProofStrictConvexityContrad} yields that $f$ is affine in the interval $(I,S)$, where $I := \inf_\Omega v$ and $S := \sup_\Omega u$.
	To prove this, we take a sequence of smooth domains $\Omega_k \subset \subset \Omega$ with $\Omega_k \subset \Omega_{k+1}$ and $\cup_{k \geq 1} \Omega_k = \Omega$ such that, for every $k\geq 1$, $\dist (\Omega_k, \partial \Omega) \geq 1/k$ (without loss of generality, after scaling we may assume that $\sup_{x \in \Omega} \dist (x,\partial\Omega) > 1$).
	Then, defining $I_k := \inf_{\Omega_k} v$ and $S_k := \sup_{\Omega_k} u$, it suffices to show that $f$ is affine in $(I_k, S_k)$, and by letting $k\to +\infty$ we will conclude the claim. The details go as follows.

	First, since $f(u) - f(v) = f'(u)(u - v)$ in $\Omega_k$, we have that $f$ is affine in the interval $[v(x), u(x)]$ for a.e. $x\in \Omega_k$.
	Now, by the strong maximum principle (applied to $u-v$ in $\Omega_k$) there exists a positive constant $\delta_k> 0$ such that
	\begin{equation}
		\label{Eq:DistDeltak}
		u - v \geq \delta_k >0 \quad \text{ in } \Omega_k.
	\end{equation}
	As a consequence of this, each of the intervals of the form $[v(x), u(x)]$ for a.e. $x\in \Omega_k$ has length at least $\delta_k$.
	That $f$ is affine in  $(I_k, S_k)$ will follow if we show that the union of the intervals $[v(x), u(x)]$, as $x$ varies a.e. in $\Omega_k$, covers all the interval $(I_k, S_k)$.
	Assume by contradiction that this last assertion is not true. 
	Then, there exist two numbers $a<b$ with $b-a \geq \delta_k$ such that $|\{a \leq u \leq b\} \cap \Omega_k |= 0$ but $|\{ u < a\} \cap \Omega_k |> 0$ and $|\{ u > b\} \cap \Omega_k |> 0$, contradicting the intermediate value theorem for functions in $H^\s$ with  $\s\geq 1/2$; see \Cref{Prop:IVT}.
	
	Now that we have proved our claim, we have that $f(t) = a + bt$ for $t\in (I,S)$ and for some $a,b\in \R$.
	Thus, $w$ solves
	\begin{equation}
		\beqc{\PDEsystem}
		\fraclaplacian w & = & b w & \text{ in } \Omega,\\
		w & > & 0 & \text{ in } \Omega,\\
		w & =& 0 & \text{ in } \R^n\setminus \Omega,
		\eeqc
	\end{equation}
	in the variational sense.
	As a consequence, since $w$ is positive, it follows that $w$ is (a multiple of) the first Dirichlet eigenfunction of $\fraclaplacian$ in $\Omega$ (see \cite[Proposition~9]{ServadeiValdinoci}) and thus necessarily $b = \lambda_1$, the first Dirichlet eigenvalue of $\fraclaplacian$.
\end{remark}

The previous two results yield our main uniqueness result for bounded by below stable solutions in $H^\s_{\Omega}$.

\begin{proof}[Proof of \Cref{Th:UniquenessStableNonlinear}]
	Assume by contradiction that there exist two different stable solutions $u_1, u_2 \in H^\s_{\Omega}$ to \eqref{Eq:SemilinearProblem} (in the $L^1$-weak sense) which are bounded by below.
	Then, by \Cref{Prop:OrderStableSolutions} they are ordered and we may assume that $u_1 < u_2$ in $\Omega$.
	However, since $f$ is strictly convex,  this contradicts \Cref{Prop:UniquenessIfNotEig}.
\end{proof}

\section{A classification result}
\label{Sec:Classification}

In this section we show \Cref{Th:Classiff(0)=0}, our classification result for stable solutions in $H^\s_{\Omega,0}$ when the nonlinearity $f$ is strictly convex and satisfies $f(0) = 0$.
The proof follows essentially from \Cref{Prop:UniquenessIfNotEig}, but to establish the bound for $f'(0)$ we will use the next lemma.

\begin{lemma}
	\label{Lemma:SuperlinearNonlinearityZeroSolNotStable}
	Let $\Omega\subset \R^n$ be a bounded smooth domain and let $f\in C^1(\R)$ be such that	$f(t) > \lambda_1 t$ for $t>0$, where $\lambda_1$ is the first eigenvalue of $\fraclaplacian$ in $\Omega$ with zero exterior Dirichlet condition.
	Let $u\in H^\s_{\Omega}$ be a nonnegative function such that $\fraclaplacian u \geq f(u)$ in $\Omega$ in the $L^1$-weak sense, i.e., such that $f(u) \in L^1(\Omega, d^\s_{\Omega} \d x)$ and
	\begin{equation}
		\int_{\Omega} u \fraclaplacian \varphi \d x + \int_{\R^n \setminus \Omega} u \ncal_\s \varphi \d x \geq \int_{\Omega} f(u) \varphi \d x
	\end{equation} 
	for every nonnegative function $\varphi$ such that $\varphi$ and $\fraclaplacian \varphi$ are bounded in $\Omega$ and such that $\varphi \equiv 0$ in $\R^n\setminus \Omega$.
	
	Then, $u\equiv 0$ in $\R^n$, $f(0)=0$, and $u$ is a solution to \eqref{Eq:SemilinearProblem} with $g \equiv 0$ which is not stable.
\end{lemma}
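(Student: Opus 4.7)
The plan is to test the weak differential inequality in the lemma's hypothesis against the first Dirichlet eigenfunction $\phi_1 > 0$ of $\fraclaplacian$ in $\Omega$, normalised so that $\fraclaplacian \phi_1 = \lambda_1 \phi_1$ in $\Omega$ and $\phi_1 \equiv 0$ in $\R^n \setminus \Omega$. Standard regularity for the Dirichlet fractional Laplacian guarantees $\phi_1 \in C^\s(\overline{\Omega})$ with $\fraclaplacian \phi_1$ bounded in $\Omega$, hence $\phi_1$ is admissible as a test function in \Cref{Def:L1WeakSol}.

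Plugging $\varphi = \phi_1$ into the $L^1$-weak inequality yields
$$\lambda_1 \int_\Omega u\, \phi_1 \d x + \int_{\R^n \setminus \Omega} u\, \ncal_\s \phi_1 \d x \;\geq\; \int_\Omega f(u)\, \phi_1 \d x.$$
Since $\phi_1$ vanishes outside $\Omega$ and is strictly positive inside, the definition of the nonlocal normal derivative gives
$$\ncal_\s \phi_1(x) \;=\; -c_{n,\s} \int_\Omega \frac{\phi_1(y)}{|x-y|^{n+2\s}} \d y \;<\; 0 \quad \text{for a.e. } x \in \R^n \setminus \Omega.$$
Combined with $u \geq 0$, rearranging the previous inequality produces
$$\int_\Omega \bigl( f(u) - \lambda_1 u \bigr) \phi_1 \d x \;+\; \int_{\R^n \setminus \Omega} u\, \bigl( -\ncal_\s \phi_1 \bigr) \d x \;\leq\; 0.$$
Continuity of $f$ and the hypothesis $f(t) > \lambda_1 t$ for $t > 0$ force $f(0) \geq 0$, so both integrands are nonnegative. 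Hence each must vanish a.e. The strict superlinear hypothesis then forces $u \equiv 0$ a.e. in $\Omega$, and letting $t \to 0^+$ gives $f(0) = 0$; the vanishing of the exterior integral, together with $-\ncal_\s \phi_1 > 0$ a.e., forces $u \equiv 0$ a.e. in $\R^n \setminus \Omega$. Thus $u \equiv 0$ on $\R^n$ and trivially solves the semilinear problem with $g \equiv 0$.

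For the final non-stability statement, the stability inequality at $u \equiv 0$ reduces, via the Rayleigh quotient characterization of $\lambda_1$, to $f'(0) \leq \lambda_1$; on the other hand, the hypothesis yields $f'(0) = \lim_{t \to 0^+} f(t)/t \geq \lambda_1$, so that $u \equiv 0$ fails to be stable whenever this bound is strict. The only real technical obstacle is the opening bookkeeping: verifying that $\phi_1$ is an admissible test function in \Cref{Def:L1WeakSol} (in particular that $\fraclaplacian \phi_1$ is bounded) and that $\ncal_\s \phi_1$ has the advertised sign outside $\Omega$. Once these routine facts are in hand, the entire argument collapses to a single eigenfunction test.
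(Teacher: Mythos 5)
Your proof is correct and takes a genuinely different, and in fact simpler, route than the paper's. The paper argues by contradiction: from the strong maximum principle it assumes $u>0$ in $\Omega$, then introduces the $\s$-harmonic replacement $w$ of $u$'s exterior data to pass to a variational inequality for $u-w\in H^\s_{\Omega,0}$, tests this against $\phi_1$, and separately tests the eigenvalue equation against $u-w$ to reach a contradiction using $w\phi_1\ge 0$. You instead test the $L^1$-weak inequality directly with $\phi_1$ and exploit the sign of $\ncal_\s\phi_1$ in $\R^n\setminus\Omega$: rearranging gives a sum of two nonnegative integrals that must vanish, which forces $u\equiv 0$ in $\Omega$ (from the strict superlinearity of $f$ where $u>0$), $f(0)=0$ (since the vanishing of $\int_\Omega(f(u)-\lambda_1 u)\phi_1\,\d x$ with $u\equiv 0$ and $\phi_1>0$ forces it), and $u\equiv 0$ in $\R^n\setminus\Omega$ (since $-\ncal_\s\phi_1>0$ there). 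Your route avoids both the harmonic replacement and the strong maximum principle, and delivers the exterior vanishing of $u$ directly rather than via unique continuation; the only bookkeeping needed is that $\phi_1$ is an admissible test function, which follows from standard regularity of Dirichlet eigenfunctions. The phrase ``letting $t\to 0^+$ gives $f(0)=0$'' is a slight misdescription --- what you actually use is that $u\equiv 0$ in $\Omega$ forces $\int_\Omega f(0)\phi_1\,\d x\le 0$, and $f(0)\ge 0$ by continuity --- but the conclusion is correct.

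You are also right to flag that the hypotheses only yield $f'(0)\ge\lambda_1$ and not the strict inequality needed to deny stability; for instance $f(t)=\lambda_1 t+t^2$ satisfies every hypothesis with $f(0)=0$, yet $f'(0)=\lambda_1$, so $u\equiv 0$ \emph{is} stable. The paper's proof asserts $f'(0)>\lambda_1$ without justification, so your caveat exposes a small overclaim in the lemma as stated (though it does not affect the way the lemma is invoked in \Cref{Th:Classiff(0)=0}, where the conclusion drawn is only $f'(0)\le\lambda_1$).
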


\begin{proof}
	From the strong maximum principle (\Cref{Prop:StrongMaxPple}), either $u>0$ in $\Omega$ or $u\equiv0$ in $\R^n$.
	Let us assume by contradiction that $u > 0$ in $\Omega$ and let $w\in H^\s_{\Omega}$ be the (energy) solution to 
	\begin{equation}
		\beqc{\PDEsystem}
		\fraclaplacian w & = & 0 & \text{ in } \Omega,\\
		w & = & u & \text{ in } \R^n \setminus \Omega.
		\eeqc
	\end{equation}
	Since $u-w\in H^\s_{\Omega,0}$, arguing as in \Cref{Remark:L1WeakSolWhichAreHs} using Fatou's lemma (taking into account that $f(u) > 0$ in $\Omega$), we have that
	\begin{equation}
		\label{Eq:SuperlinearNonlinearitySupersol}
		\langle u - w, \varphi \rangle_{H^\s_\Omega} \geq \int_{\Omega} f(u) \varphi \d x
	\end{equation}
	for every nonnegative $\varphi\in H^\s_{\Omega,0}$.

	Now, let $\phi_1\in H^\s_{\Omega,0}$ be the first eigenvalue of $\fraclaplacian$ with zero Dirichlet exterior condition in $\R^n\setminus \Omega$.
	Recall that $\phi_1>0$ in $\Omega$.
	On the one hand, taking $\varphi = \phi_1 $ in \eqref{Eq:SuperlinearNonlinearitySupersol} we have
	\begin{equation}
		\langle u - w, \phi_1 \rangle_{H^\s_\Omega} \geq \int_{\Omega} f(u) \phi_1 \d x > \lambda_1\int_{\Omega} u \phi_1 \d x .
	\end{equation}
	On the other hand, taking $u - w\in H^\s_{\Omega,0}$  as a test function in the weak formulation of the equation for $\phi_1$ and using that $ w\phi_1\geq 0 $ in $\Omega$ (note that $0 \leq w \leq u$ in $\Omega$ by the maximum principle), we have
	\begin{equation}
		\langle  u-w, \phi_1 \rangle_{H^\s_\Omega} = \lambda_1 \int_{\Omega} (u - w) \phi_1 \d x \leq \lambda_1 \int_{\Omega} u \phi_1 \d x,
	\end{equation}
	arriving at a contradiction.
	This shows that $u\equiv0$ in $\R^n$.

	Finally, let us show that $u$ is not stable.
	To do it, recall that by the definition of $\phi_1$ and $\lambda_1$, it holds that
	\begin{equation}
		\lambda_1 = \dfrac{[\phi_1]_{H^\s_{\Omega}}^2}{\norm{\phi_1}^2_{L^2(\Omega)}}.
	\end{equation}
	Hence, since $f'(0) > \lambda_1$, we have
	\begin{equation}
		\int_{\Omega} f'(u) \phi_1^2 \d x = f'(0) \int_{\Omega}  \phi_1^2 \d x > \lambda_1 \norm{\phi_1}^2_{L^2(\Omega)} = [\phi_1]_{H^\s_{\Omega}}^2,
	\end{equation}
	showing that $u$ is not stable.
\end{proof}

With the previous result at hand we can now establish \Cref{Th:Classiff(0)=0}.

\begin{proof}[Proof of \Cref{Th:Classiff(0)=0}]
	Since $f(0)=0$, $v \equiv 0$ is a solution to \eqref{Eq:SemilinearProblemZero}.
	Then, by \Cref{Prop:UniquenessIfNotEig},  $u\equiv v \equiv 0$ and since $u$ is stable, by \Cref{Lemma:SuperlinearNonlinearityZeroSolNotStable} it follows that $f'(0) \leq \lambda_1$.
\end{proof}

\begin{remark}
	\label{Remark:StrictConvexityClassification}
	If we relax the convexity assumption on $f$ to allow non strictly convex nonlinearities, then  \Cref{Th:Classiff(0)=0} may not be true.
	Indeed, if $f(u) = \lambda_1 u$, then $u$ can be the first eigenfunction of $\fraclaplacian$ with zero exterior Dirichlet condition in $\R^n\setminus \Omega$ (which is stable).
	Note that, in view of the arguments in \Cref{Remark:StrictConvexity}, this is the only other possible option if $u\not\equiv 0$ whenever $f$ is (non strictly) convex and $\s \geq 1/2$.
%
\end{remark}

\section{Approximation of stable solutions}
\label{Sec:Approximation}

In this section we establish our main results regarding the approximation of stable solutions in $H^\s_{\Omega}$ (by bounded stable solutions).

\begin{proof}[Proof of \Cref{Th:HsAppConvex}]
	We proceed as in \cite{DupaigneFarina}. 
	Note first that if $f'(t) \leq 0$ for all $t\geq -M$, then by convexity we have that $f$ is globally Lipschitz in $[-M,+\infty)$ ---since $ f'(-M) \leq f'(t) \leq 0$ for $t\in [-M,+\infty)$--- and then a standard bootstrap argument (using \Cref{Prop:RegularityLp}, as in the proof of \Cref{Prop:uepsilon} below) yields that $u\in L^\infty(\Omega)$; thus we can take $u_k = u $ and $f_k = f$ for all $k\in \N$.
	As a consequence, in the rest of the proof we will assume that there exists some $t_\bullet>-M$ for which $f'(t_\bullet) > 0$.
	
	Under the previous assumption, by convexity there exists $t_0>0$ such that $f'(t) > 0$ for $t>t_0$.
	We set $k_0 := \lceil t_0 \rceil$, i.e., the smallest integer greater or equal than $t_0$ (hence $k_0 \geq 1$), and for every integer $k\geq k_0$ we define
	\begin{equation}
		\label{Eq:fkApprox}
		f_k(t)  :=
		\beqc{lcc}
		f(t) & \text{ if } & t \leq k,\\
		f(k) + f'(k) (t-k) & \text{ if } & t > k.\\
		\eeqc
	\end{equation}
	Note that each $f_k$ is a $C^1$ convex function which is globally Lipschitz in $[-M,+\infty)$ (since $f'(-M) \leq f_k'(t) \leq f'(k)$ for all $t\geq -M$) and bounded below by $C_\circ := \min_{t \in [-M, k_0]} f(t)$.
	By convexity we have that $f_k(t) \leq f(t) $ for all $t\geq -M$, and clearly $f_k \uparrow f$ pointwise in $[-M,+\infty)$ as $k\to + \infty$. 
	Moreover, if $f$ is nonnegative, all the functions $f_k$ are also nonnegative, and if $f$ is $C^{1,\gamma}$ for some $\gamma$, so are $f_k$. 
	
	In order to build a solution to \eqref{Eq:SemilinearPbHsAppfkConvex} for a nonlinearity $f_k$ as in \eqref{Eq:fkApprox}, we will use the monotone iteration method (see \Cref{Prop:MonotoneIteration}).
	As a subsolution, we will take $\usub := -M$ (recall that $0 \leq f(-M)$ and that $u\geq -M$  in $\R^n$), while as a supersolution we will take $\usup := u$ (note that $u\geq-M$ in $\Omega$, and thus $f_k(u) \leq f(u)$ in $\Omega$).
	Then, since  $f_k$ is globally Lipschitz in $[-M,+\infty)$, we can apply \Cref{Prop:MonotoneIteration} (taking into account \Cref{Remark:MonotoneIteration}) to obtain a solution $u_k$ to \eqref{Eq:SemilinearPbHsAppfkConvex} such that $-M\leq u_k \leq u$ in $\Omega$.
	Note that since $f_k$ is bounded by below and linear at infinity, a standard bootstrap argument ---as that of \Cref{Prop:uepsilon} below, using \Cref{Prop:RegularityLp} taking into account that $u_k \leq u$ in $\Omega$ and that $u\in L^1(\Omega)$--- yields that $u_k$ is bounded in $\Omega$, and hence regular (see \Cref{Remark:BoundedSolutions}); in particular, $u_k \in H^\s_{\Omega}$ (see \Cref{Sec:ExteriorCondition} for more details on the role of the exterior condition).
	Furthermore, since $f$ is convex and $f_k' (t) \leq f'(t)$ for $t\geq -M$, it follows that $f_k'(u_k) \leq f_k'(u) \leq f'(u)$ in $\Omega$, and hence $u_k$ is stable.
	
	It remains to show that $u_k \to u$ in $H^\s_\Omega$  as $k\to +\infty$ (up to subsequences).
	To do so, note first that $f_{k} (t) \leq f_{k+1}(t)$ for all $t\geq -M$ and $k\in \N$ and thus $u_{k+1}$ is a supersolution of the problem for $u_k$.
	Hence, $u_k \leq u_{k+1}$ in $\Omega$ and taking the pointwise limit of $u_k$ as $k\to +\infty$ (which exists by monotonicity) we obtain a function $v$ such that $v \leq u $ in $\Omega$.
	Furthermore, by the dominated convergence theorem, $u_k \to v$ in $L^2(\Omega)$ as $k\to +\infty$.
	
	Now, we test the equation for $u_k$ with $u-u_k \in H^\s_{\Omega,0}$ (as in \Cref{Remark:L1WeakSolWhichAreHs} using that $u_k$ is bounded) to obtain
	\begin{equation}
		\langle u_k, u - u_k \rangle_{H^\s_\Omega} = \int_\Omega f_k (u_k) (u - u_k) \d x.
	\end{equation}
	Hence, using that $f_k$ is bounded below by the constant $C_\circ = \min_{t \in [-M, k_0]} f(t)$ which is independent of $k$, we have
	\begin{equation}
		\begin{split}
			[u_k]^2_{H^\s_\Omega} &= \langle u_k, u_k \rangle_{H^\s_\Omega} = \langle u_k, u \rangle_{H^\s_\Omega}  - \int_\Omega f_k (u_k) (u - u_k) \d x \\
			& \leq \langle u_k, u \rangle_{H^\s_\Omega}  - C_\circ \int_\Omega  (u - u_k) \d x \\
			&\leq \dfrac{1}{2}\langle u_k, u_k \rangle_{H^\s_\Omega} + \dfrac{1}{2}\langle u, u \rangle_{H^\s_\Omega}  + |C_\circ| \norm{u}_{L^1(\Omega)} + |C_\circ| \norm{u_k}_{L^1(\Omega)}.
		\end{split}
	\end{equation} 
	Thus, since $-M \leq u_k \leq u$ in $\Omega$, it follows that $ \norm{u_k}_{L^1(\Omega)}$ and $ \norm{u_k}_{L^2(\Omega)}$ are bounded independently of $k$, and therefore the sequence $u_k$ is uniformly bounded in $H^\s_{\Omega}$.
	By the reflexivity of this space, there exists a subsequence $k_j$ for which $u_{k_j} \rightharpoonup v$ in $H^\s_{\Omega}$.
	To show that we have indeed strong convergence, we test the equation for $u_{k_j}$ with $v - u_{k_j} \in H^\s_{\Omega,0}$ and similarly as before we obtain
	\begin{equation}
		\norm{u_{k_j}}^2_{H^\s_\Omega} \leq \norm{v}^2_{H^\s_\Omega}  + 2 |C_\circ| \int_\Omega  (v - u_{k_j}) \d x.
	\end{equation}
	Hence, using that $u_{k_j} \to v$ in $L^2(\Omega)$ we obtain
	\begin{equation}
		\limsup_{k_j \to +\infty} 	\norm{u_{k_j}}^2_{H^\s_\Omega} \leq \norm{v}^2_{H^\s_\Omega}  .
	\end{equation}
	This together with the weak convergence yields that $u_{k_j} \to v$ in $H^\s_{\Omega}$ as $k_j \to +\infty$ (see \cite[Proposition~3.32]{Brezis}).
	
	We shall prove now that $v$ solves the same problem as $u$ (in the $L^1$-weak sense).
	To establish this, it suffices to take the limit $k_j\to \infty$ in the $L^1$-weak formulation of the equation for $u_{k_j}$ using that $f_{k_j} (u_{k_j}) \to f(v)$ in $L^1(\Omega)$.
	This last statement follows by dominated convergence, since 
	\begin{equation}
		C_\circ \leq f_{k_j} (u_{k_j}) \leq f(u_{k_j}) = f(u_{k_j}) \chi_{\{v_{k_j}\leq k_0\}} + f(u_{k_j}) \chi_{\{v_{k_j}> k_0\}} \leq \sup_{t\in [-M,k_0]} f(t) + |f(u)|.
	\end{equation}

	To conclude, since $v$ is a stable $L^1$-weak solution to \eqref{Eq:SemilinearProblem} with $-M\leq v\leq u$, \Cref{Prop:UniquenessIfNotEig} yields that  $u \equiv v$, concluding the proof.
\end{proof}

\begin{remark}
	\label{Remark:StrictConvexityApprox}
	Note that the strict convexity hypothesis of $f$ has been used only in the last point of the proof, to use \Cref{Prop:UniquenessIfNotEig}.
	Nevertheless, \Cref{Th:HsAppConvex} could be generalized to (non strictly) convex nonlinearities in some cases.
	Indeed, if $f$ is piecewise affine\footnote{That is, defined piecewise in a finite number of intervals and affine in each of these intervals.}, since $f$ is globally Lipschitz it follows that $u$ is bounded (this can be seen using a bootstrap argument); we can take then $f_k = f$ and $u_k = u$ for every $k\in \N$.
	In particular, note that if $\s\geq 1/2$ we showed in \Cref{Remark:StrictConvexity} that if $f$ is not strictly convex and uniqueness does not hold, then necessarily $f(t) = a + \lambda_1 t$ for all $t\geq -M$ (and thus also the approximation result holds).
	
\end{remark}

In the rest of the section, we establish another approximation result in which the nonlinearity $f$ (which we additionally assume to be nonincreasing) is replaced by $(1-\varepsilon_k) f$ for some sequence $\varepsilon_k \downarrow 0$.
Note that this approximation reminds of the problem of the extremal solution (see \cite[Section~1.1]{CabreSanzPerela-HalfLaplacian}).
For the sake of simplicity, and since \Cref{Th:HsAppConvex} already gives an approximation result for more general stable solutions, we will consider here nonnegative stable solutions which vanish outside $\Omega$.
Nevertheless, the obvious modifications in the proofs establish the result for more general exterior data.
We will also assume that $f(0)>0$, since otherwise our classification result (\Cref{Th:Classiff(0)=0}) yields that $u\equiv 0$ in $\R^n$.

\begin{theorem}
	\label{Th:HsAppEps}
	Let $\Omega\subset \R^n$ be a bounded smooth domain, and let $f\in C^1(\R)$ be a nondecreasing strictly convex function with $f(0) > 0$.
	Let $u$ be a stable $L^1$-weak solution to	
	\begin{equation}
		\label{Eq:SemilinearPbHsApp}
		\beqc{\PDEsystem}
		\fraclaplacian u & = & f(u)& \text{ in } \Omega,\\
		u & = & 0 & \text{ in } \R^n\setminus \Omega,
		\eeqc
	\end{equation}
	such that $u\in H^\s_\Omega$.
	
	Then, there exists a sequence $\{\varepsilon_k\}_{k \in \N} \subset [0,1)$ such that $\varepsilon_k \downarrow 0$ as $k\to +\infty$, and a sequence $\{u_k\}_{k \in \N}$ of bounded stable solutions to
	\begin{equation}
		\label{Eq:SemilinearPbHsAppEpsilon}
		\beqc{\PDEsystem}
		\fraclaplacian u_k & = & (1-\varepsilon_k) f(u_k)& \text{ in } \Omega,\\
		u_k & = & 0 & \text{ in } \R^n\setminus \Omega,
		\eeqc
	\end{equation}
	such that $0 \leq u_k \leq u$ in $\R^n$, and such that $u_k \to u$ in $H^\s_\Omega$  as $k\to +\infty$.
\end{theorem}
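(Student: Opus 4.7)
The plan is to run a construction parallel to that of \Cref{Th:HsAppConvex}, but replacing the truncation of $f$ by the multiplicative rescaling $(1-\varepsilon)f$. The two crucial structural observations are that, because $f$ is nondecreasing with $f(0)>0$, the given solution $u$ is a supersolution of the rescaled problem --- since $(1-\varepsilon)f(u)\leq f(u)=\fraclaplacian u$ --- and $\usub\equiv 0$ is a strict subsolution --- since $(1-\varepsilon)f(0)>0=\fraclaplacian 0$. These are the ingredients needed to run a monotone iteration scheme for every $\varepsilon\in(0,1)$.

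For each fixed $\varepsilon\in(0,1)$ I would first construct a bounded stable solution $u_\varepsilon$ to \eqref{Eq:SemilinearPbHsAppEpsilon} lying between $0$ and $u$. Since $(1-\varepsilon)f$ is not globally Lipschitz, I would combine the truncation \eqref{Eq:fkApprox} with the multiplicative factor: the nonlinearities $(1-\varepsilon)f_k$ are globally Lipschitz and convex, so \Cref{Prop:MonotoneIteration} with $\usub\equiv 0$ and $\usup:=u$ yields bounded solutions $u_{\varepsilon,k}$ with $0\leq u_{\varepsilon,k}\leq u$. These are stable, since convexity of $f$ together with $u_{\varepsilon,k}\leq u$ gives $(1-\varepsilon)f_k'(u_{\varepsilon,k})\leq f'(u)$. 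Mimicking the end of the proof of \Cref{Th:HsAppConvex} --- monotonicity in $k$, uniform $H^\s_\Omega$-bound obtained by testing against $u-u_{\varepsilon,k}$, weak convergence, upgrade to strong convergence via testing against the difference with the weak limit, and identification of the limit through \Cref{Prop:UniquenessIfNotEig} --- one obtains a stable $L^1$-weak solution $u_\varepsilon$ of \eqref{Eq:SemilinearPbHsAppEpsilon} with $0\leq u_\varepsilon\leq u$ and $u_{\varepsilon,k}\to u_\varepsilon$ in $H^\s_\Omega$ as $k\to\infty$.

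The main obstacle is proving that $u_\varepsilon$ is itself bounded, and not merely dominated by the possibly unbounded $u$. Here the multiplicative gap $1-\varepsilon<1$ is decisive, and the assumption that $f$ is nondecreasing (which makes $f(u)\geq 0$) allows one to absorb the nonlinear term. I would perform a bootstrap argument in the spirit of \Cref{Prop:uepsilon}, based on the $L^p$-regularity of \Cref{Prop:RegularityLp}: starting from $u_\varepsilon\in H^\s_\Omega\hookrightarrow L^{2n/(n-2\s)}(\Omega)$ (or $L^q$ for every finite $q$ if $2\s\geq n$), the convexity of $f$ combined with the domination $u_\varepsilon\leq u$ lets one control $(1-\varepsilon)f(u_\varepsilon)$ and iteratively upgrade the integrability of $u_\varepsilon$ until reaching $L^\infty(\Omega)$; the slack provided by $1-\varepsilon<1$ is what prevents the iteration from saturating. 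Once $u_\varepsilon\in L^\infty$, \Cref{Remark:BoundedSolutions} promotes it to a classical solution.

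To conclude, I would pick any sequence $\varepsilon_k\downarrow 0$ and set $u_k:=u_{\varepsilon_k}$. Since the construction of $u_\varepsilon$ via monotone iteration is monotone in the parameter $1-\varepsilon$, one has $u_k\leq u_{k+1}\leq u$ pointwise; let $v:=\lim_k u_k\leq u$. Running once more the $H^\s_\Omega$-convergence scheme of \Cref{Th:HsAppConvex} --- testing the equations for $u_k$ against $v-u_k\in H^\s_{\Omega,0}$, and using that $(1-\varepsilon_k)f(u_k)\to f(v)$ by dominated convergence with the dominant $f(u)$ --- yields $u_k\to v$ strongly in $H^\s_\Omega$ and shows that $v$ is a stable $L^1$-weak solution of \eqref{Eq:SemilinearPbHsApp}. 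Since $v\leq u$ with both $v$ and $u$ stable, \Cref{Th:UniquenessStableNonlinear} (via \Cref{Prop:UniquenessIfNotEig}) forces $v\equiv u$, completing the proof.
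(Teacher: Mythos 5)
Your reduction of the construction to a monotone iteration with $\usub\equiv 0$, $\usup:=u$, and truncated nonlinearities $(1-\varepsilon)f_k$, is sound as far as producing, for each fixed $\varepsilon$, a stable $L^1$-weak solution $u_\varepsilon$ of \eqref{Eq:SemilinearPbHsAppEpsilon} with $0\leq u_\varepsilon\leq u$ and $u_\varepsilon\in H^\s_\Omega$; and your final passage $\varepsilon_k\downarrow 0$ is essentially the same as the paper's. However, you correctly identify the boundedness of $u_\varepsilon$ as ``the main obstacle,'' and the argument you offer for it does not work. The only pointwise control you have on the right-hand side is $(1-\varepsilon)f(u_\varepsilon)\leq(1-\varepsilon)f(u)$, and $f(u)$ is no more integrable than $\fraclaplacian u$ --- a priori only in $L^1(\Omega,d_\Omega^\s\d x)$. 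Feeding this into \Cref{Prop:RegularityLp} returns exactly the integrability of $u$, with no gain: the multiplicative factor $1-\varepsilon$ rescales the norm but does not change the exponent $p$, so the bootstrap never improves and in fact never starts. The appeal to ``the slack provided by $1-\varepsilon<1$'' is not a mechanism; you need a structural device that converts the (possibly superlinear) $f$ into a nonlinearity of controlled growth applied to $u$.

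This is precisely what the paper's \Cref{Prop:uepsilon} supplies and what your proposal misses. One defines the concave increasing function $\Phi_\delta$ solving $\Phi_\delta' f = (1-\delta)f(\Phi_\delta)$, $\Phi_\delta(0)=0$, proves via concavity and \Cref{Lemma:KatoIneq} that $\Phi_\delta(u)$ is an $L^1$-weak supersolution of the $\delta$-rescaled problem, and --- crucially --- establishes the growth bound $f(\Phi_\delta(t))\leq (C_\circ/\delta)(1+t)$. It is this \emph{linear-in-$t$} bound, not the bare inequality $u_\varepsilon\leq u$, that lets each application of \Cref{Prop:RegularityLp} raise the integrability exponent by a fixed amount; after finitely many iterations (stopping at $k_0$ steps, with $\delta$ chosen so that $(1-\delta)^{k_0}=1-\varepsilon$, rather than passing to the limit $k\to\infty$ as you do) one lands in $L^p$ with $p>n/(2\s)$ and hence in $L^\infty$. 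Without the concave truncation and the associated growth estimate, the boundedness claim is unjustified and the proof has a genuine gap.
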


To establish \Cref{Th:HsAppEps} we need two preliminary results, \Cref{Lemma:KatoIneq} and \Cref{Prop:uepsilon}. 
The first one is a general statement of what is usually known as concave truncation method.\footnote{Sometimes referred to as Kato's inequality when $\Phi (u) = - u^+$.}
We include the detailed proof for $L^1$-weak solutions and with nontrivial exterior data for the sake of completeness.

\begin{lemma}
	\label{Lemma:KatoIneq}
	Let $\Omega\subset \R^n$ a bounded smooth domain.
	Given $h \in L^1(\Omega, d_\Omega^\s \dx)$ and $g$ as in \eqref{Eq:Assumptionsg}, let $u$ be the $L^1$-weak solution to 
	\begin{equation}
		\beqc{\PDEsystem}
		\fraclaplacian u & = & h& \text{ in } \Omega,\\
		u & = & g & \text{ in } \R^n\setminus \Omega.
		\eeqc
	\end{equation}
	
	Then, for every concave function $\Phi \in C^1(\R)$ such that $\Phi'$ is bounded, we have $\Phi(u) \in L^1(\Omega)$ and $\fraclaplacian \Phi(u) \geq \Phi'(u) h$ in $\Omega$ in the $L^1$-weak sense, that is,
	\begin{equation}
		\int_\Omega  \Phi(u) \fraclaplacian \varphi \d x +  \int_{\R^n \setminus \Omega}\Phi (u) \ncal_\s \varphi \d x  \geq \int_\Omega \Phi'(u) h \varphi \d x 
	\end{equation}
	for every $\varphi$ such that $\varphi$ and $\fraclaplacian \varphi$ are bounded in $\Omega$, and such that $\varphi \geq 0$  in $\Omega$ and $\varphi \equiv 0$  in $\R^n\setminus \Omega$.
\end{lemma}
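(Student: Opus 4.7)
The plan is an approximation argument that reduces the statement to the pointwise Kato inequality for smooth data. Integrability of $\Phi(u)$ in $L^1(\Omega)$ is immediate: since $|\Phi'|\le L$ for some constant $L$, we have $|\Phi(u)|\le|\Phi(0)|+L|u|$, and $u\in L^1(\Omega)$ by assumption. The nontrivial content is therefore the differential inequality.

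First, I would approximate $h$ in $L^1(\Omega,d_\Omega^\s\dx)$ by a sequence $\{h_k\}$ of smooth bounded functions, and let $u_k$ denote the $L^1$-weak solution to the Dirichlet problem with right-hand side $h_k$ and the same exterior datum $g$. The $L^1$-theory recalled in \Cref{Sec:L1Theory} gives $u_k\to u$ in $L^1(\Omega)$ and, up to a subsequence, pointwise a.e.\ in $\Omega$; by standard interior/boundary regularity for $\fraclaplacian$ each $u_k$ is bounded in $\R^n$ and $C^\infty$ inside $\Omega$, hence a classical solution. For each such $u_k$ the pointwise Kato inequality
\begin{equation*}
\fraclaplacian \Phi(u_k)(x)\;\ge\;\Phi'(u_k(x))\,h_k(x),\qquad x\in\Omega,
\end{equation*}
follows from concavity: $\Phi(u_k(x))-\Phi(u_k(y))\ge \Phi'(u_k(x))\big(u_k(x)-u_k(y)\big)$; dividing by $c_{n,\s}^{-1}|x-y|^{n+2\s}$ and integrating in $y\in\R^n$ is legitimate because $u_k$ is smooth at $x$ and bounded, and the singular integral representing $\fraclaplacian u_k(x)$ converges absolutely. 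Multiplying this pointwise bound by a nonnegative admissible $\varphi$ and applying the integration-by-parts formula for bounded $C^2$ functions (which applies to $\Phi(u_k)$, since $\Phi(u_k)=\Phi(g)$ outside $\Omega$; see \cite{DipierroRosOtonValdinoci-Neumann} and \Cref{Sec:L1Theory}) gives
\begin{equation*}
\int_\Omega \Phi(u_k)\,\fraclaplacian \varphi\dx + \int_{\R^n\setminus\Omega}\Phi(g)\,\ncal_\s\varphi\dx \;\ge\; \int_\Omega \Phi'(u_k)\,h_k\,\varphi\dx.
\end{equation*}

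Finally, I would pass to the limit $k\to\infty$. The first integral converges to $\int_\Omega\Phi(u)\fraclaplacian\varphi\dx$ since $|\Phi(u_k)-\Phi(u)|\le L|u_k-u|$ and $\fraclaplacian\varphi$ is bounded; the exterior integral is independent of $k$. For the right-hand side, decompose
\begin{equation*}
\Phi'(u_k)h_k\varphi-\Phi'(u)h\varphi \;=\; \Phi'(u_k)(h_k-h)\varphi + \bigl(\Phi'(u_k)-\Phi'(u)\bigr)h\,\varphi,
\end{equation*}
and use that admissible test functions satisfy $|\varphi|\le C\,d_\Omega^\s$: the first piece has $L^1(\Omega)$-norm bounded by $LC\|h_k-h\|_{L^1(\Omega,d_\Omega^\s\dx)}\to 0$, while the second piece tends to $0$ in $L^1(\Omega)$ by dominated convergence, using a.e.\ convergence of $\Phi'(u_k)$ along the chosen subsequence and the envelope $2LC|h|d_\Omega^\s\in L^1(\Omega)$. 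This yields the desired inequality for $u$. The main obstacle I expect is the integration-by-parts step applied to $\Phi(u_k)$: it requires both boundedness and $C^2$ smoothness of $u_k$ together with the Neumann-type duality formula, so the argument depends essentially on having the regularity plus duality statements packaged in \Cref{Sec:L1Theory}; once those are granted, the rest of the proof is just stability of the Dirichlet problem plus dominated convergence.
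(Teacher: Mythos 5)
Your strategy is the same as the paper's: approximate $h$ by smooth data, apply the pointwise Kato inequality to the regular solutions $u_k$, integrate against test functions, and pass to the limit. However, there is a genuine gap in the middle step, and it comes precisely from the fact that $\Phi$ is only assumed to be $C^1$.

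You claim that the integrand-wise concavity inequality can be integrated in $y$ because ``the singular integral representing $\fraclaplacian u_k(x)$ converges absolutely.'' This is false for $\s \ge 1/2$: near $x$, $|u_k(x)-u_k(y)|\sim |\nabla u_k(x)||x-y|$ and $|x-y|^{1-n-2\s}$ is not locally integrable when $2\s\ge 1$; the integral defining $\fraclaplacian u_k(x)$ is only a principal value. Worse, the function $\Phi(u_k)$ is only $C^1$ when $\Phi$ is $C^1$, no matter how smooth $u_k$ is, so $\fraclaplacian \Phi(u_k)$ need not be well defined pointwise for $\s\ge 1/2$. The same issue re-appears a line later when you invoke ``the integration-by-parts formula for bounded $C^2$ functions (which applies to $\Phi(u_k)$)'': $\Phi(u_k)$ is \emph{not} $C^2$, so this duality identity is not directly applicable. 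One can salvage the sign of the pointwise inequality by noting that $\Phi(u_k(x))-\Phi(u_k(y))-\Phi'(u_k(x))(u_k(x)-u_k(y))\ge 0$ and integrating this nonnegative remainder, but that only gives $\fraclaplacian\Phi(u_k)(x)\in(-\infty,+\infty]$, and the set where the value is $+\infty$ still obstructs the integration by parts.

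The paper closes this gap by first proving the inequality for a \emph{smooth} concave approximation $\Phi_\epsilon$ of $\Phi$ (a mollification, which preserves concavity), for which $\Phi_\epsilon(u_k)\in C^\infty(\Omega)\cap C^{\alpha_0}(\overline\Omega)$ so that both the pointwise fractional Laplacian and the integration by parts are legitimate, and then letting $\epsilon\to 0$ using that $\Phi\in C^1$ with $\Phi'$ bounded. Your proof needs this extra mollification step (or an equivalent device) before the passage to the limit in $k$; everything else in your write-up, including the treatment of the $k\to\infty$ limit via $|\varphi|\le Cd_\Omega^\s$ and dominated convergence, is correct and matches the paper.
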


\begin{proof}
	Let $h_k\in C^\infty_c(\Omega)$ be a sequence of functions such that $h_k \to h$ in  $L^1(\Omega, d_\Omega^\s \dx)$, and define $u_k$ as the solution to 
	\begin{equation}
		\beqc{\PDEsystem}
		\fraclaplacian u_k & = & h_k& \text{ in } \Omega,\\
		u_k & = & g & \text{ in } \R^n\setminus \Omega.
		\eeqc
	\end{equation}
	By the regularity of the fractional Laplacian (see \cite{RosOtonSerra-Regularity, AudritoRosOton}), it follows that $u_k \in C^\infty(\Omega)\cap C^{\alpha_0} (\overline{\Omega})$ with $\alpha_0$ as in \eqref{Eq:Assumptionsg}, and therefore the equation holds pointwise.
	Note that the functions $u_k$ are uniformly bounded in $L^1(\Omega)$ and converge to $u$ in $L^1(\Omega)$ (by uniqueness of $L^1$-weak solutions).
	
	Now, assume for a while that $\Phi$ is smooth.
	Since it is concave, we have $\Phi(a )- \Phi(b) \geq \Phi'(a) (a- b)$ for all $a,b \in \R$, and thus it readily follows that 
	\begin{equation}
		\label{Eq:FracLapConcave}
		\fraclaplacian \Phi (u_k) \geq \Phi'(u_k) \fraclaplacian u_k = \Phi'(u_k) h_k \quad \text{ in } \Omega. 
	\end{equation}
	Multiplying the above inequality by a function $\varphi$ being as in the statement of the result, and integrating by parts the fractional Laplacian (using that $\varphi \equiv 0$ in $\R^n \setminus \Omega$), we have
	\begin{equation}
		\label{Eq:KatoWeakProof}
	\int_\Omega 	 \Phi (u_k) \fraclaplacian\varphi \d x + \int_{\R^n \setminus \Omega}\Phi (u_k) \ncal_\s \varphi \d x  \geq \int_\Omega \Phi'(u_k) h_k \varphi \d x.
	\end{equation}
	Note that we have assumed that $\Phi$ is smooth in order to compute the fractional Laplacian in \eqref{Eq:FracLapConcave} ---we require $\Phi(u_k)$ to be $C^\beta$ for $\beta > 2\s$.
	To avoid this issue if $\Phi$ is only $C^1$, we can just simply take $\Phi_\epsilon$ being a smooth concave approximation\footnote{Note that the convolution of a convex function with a standard mollifier provides a smooth approximation which is convex, see \cite{Ghomi-convex}.} of $\Phi$ in the range of $u_k$ (which is smooth and hence bounded) and carry out the previous argument for such $\Phi_\epsilon$. 
	Then, by letting $\epsilon\to 0$ and using that $\Phi$ is $C^1$ and that $\Phi'$ is bounded, we obtain \eqref{Eq:KatoWeakProof}.

	Now, since $\Phi'$ is bounded, it follows that $|\Phi(t)|  \leq C |t| + |\Phi (0)|$ for all $t$ in $\R$, and therefore $\Phi(u_k)$ are uniformly bounded in $L^1(\Omega)$.
	Hence, there exists a sequence $k_j$ such that $\Phi(u_{k_j}) \to \Phi(u)$ in $L^1(\Omega)$ as $k_j \to +\infty$ (to see this, we use convergence a.e., the continuity of $\Phi$, and dominated convergence).
	Similarly, we also have that $\Phi'(u_{k_j}) h_{k_j} \to \Phi'(u) h$ in $L^1(\Omega, d_\Omega^\s \d x)$ ---up to a subsequence which we relabel as $k_j$ abusing of notation.
	Thus, taking the limit $k_j \to +\infty$ in \eqref{Eq:KatoWeakProof} and noting that $\Phi(u_{k_j}) = \Phi(g) = \Phi(u)$ in $\R^n \setminus \Omega$ for all $k_j$, the desired conclusion follows.
\end{proof}

With the previous tool at hand we can now establish the second result needed in the proof of \Cref{Th:HsAppEps}.
In its proof, which follows the same lines of \cite[Theorem 3.2.1]{Dupaigne}, the construction of the approximate solutions is explicitly described.
The main idea is to construct the solution $u_\varepsilon$ using monotone iteration, taking $0$ as a subsolution, and $\Phi_\varepsilon(u)$,  for some concave function $\Phi_\varepsilon$, as a supersolution.

\begin{lemma}
	\label{Prop:uepsilon}
	Let $\Omega \subset \R^n$ be a bounded smooth domain and let $f\in C^1(\R)$ be a nondecreasing convex function with $f(0) > 0$.
	Assume that $u$ is a stable $L^1$-weak solution to \eqref{Eq:SemilinearProblemZero}.
	
	Then, for every $\varepsilon\in (0,1)$ there exists a stable solution $u_\varepsilon$ to 
	\begin{equation}
		\label{Eq:SemilinearProblemEpsilon}
		\beqc{\PDEsystem}
		\fraclaplacian u_\varepsilon & = & (1 - \varepsilon)  f(u_\varepsilon) & \text{ in } \Omega,\\
		u_\varepsilon & = & 0 & \text{ in } \R^n\setminus \Omega,
		\eeqc
	\end{equation}
	which is bounded (and hence it is a pointwise solution).
\end{lemma}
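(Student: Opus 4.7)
The plan is to follow the sub/supersolution strategy described in the statement, using a concave auxiliary function to produce a supersolution from $u$, and then to show the resulting iterate is both bounded and stable.

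First, I would construct a concave auxiliary $\Phi_\varepsilon \in C^1([0,+\infty))$ with $\Phi_\varepsilon(0) = 0$, for instance as the maximal solution of the ODE
\[
\Phi_\varepsilon'(t) \, f(t) = (1-\varepsilon)\, f(\Phi_\varepsilon(t)), \qquad \Phi_\varepsilon(0) = 0.
\]
Since $f(0) > 0$ and $f$ is nondecreasing we have $f > 0$ on $[0,+\infty)$, so this ODE is well posed globally. The conditions $\Phi_\varepsilon(0) = 0$ and $\Phi_\varepsilon'(0) = 1-\varepsilon$ together with the convexity of $f$ yield $\Phi_\varepsilon(t) \le (1-\varepsilon)\, t$ and $0 < \Phi_\varepsilon'(t) \le 1-\varepsilon$ (so $\Phi_\varepsilon'$ is bounded), and a direct computation of $\Phi_\varepsilon''$ using $f' \ge 0$ and $\Phi_\varepsilon \le t$ gives concavity of $\Phi_\varepsilon$. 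Then \Cref{Lemma:KatoIneq} applies to $u$ and yields, in the $L^1$-weak sense,
\[
\fraclaplacian \Phi_\varepsilon(u) \ge \Phi_\varepsilon'(u) \, f(u) = (1-\varepsilon)\, f(\Phi_\varepsilon(u)) \quad \text{in } \Omega,
\]
with $\Phi_\varepsilon(u) = 0$ in $\R^n \setminus \Omega$; hence $\overline{v} := \Phi_\varepsilon(u)$ is a supersolution to \eqref{Eq:SemilinearProblemEpsilon}. On the other hand, $f(0) > 0$ makes $\underline{v} := 0$ a strict subsolution, and $\underline v \le \overline v$.

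Next, to invoke the monotone iteration scheme \Cref{Prop:MonotoneIteration} (which needs local Lipschitz regularity of the nonlinearity), I would first truncate and work with $f^{(N)}(t) := (1-\varepsilon) f(\min(t,N))$, which is globally Lipschitz. For each $N$ the scheme produces a pointwise solution $u_\varepsilon^{(N)}$ with $0 \le u_\varepsilon^{(N)} \le \Phi_\varepsilon(u)$, and monotonicity in $N$ plus the integrable dominating bound $(1-\varepsilon) f(\Phi_\varepsilon(u)) \le \Phi_\varepsilon'(u) f(u) \in L^1(\Omega, d_\Omega^\s \dx)$ (inherited from $u$ being an $L^1$-weak solution) lets me pass to the limit $N \to \infty$ by dominated convergence to obtain an $L^1$-weak solution $u_\varepsilon$ to \eqref{Eq:SemilinearProblemEpsilon} with $0 \le u_\varepsilon \le \Phi_\varepsilon(u) \le (1-\varepsilon) u$. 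By construction $u_\varepsilon$ is the \emph{minimal} nonnegative solution, which is the key to its stability: if the linearized operator $\fraclaplacian - (1-\varepsilon) f'(u_\varepsilon)$ had negative first Dirichlet eigenvalue in $\Omega$, then subtracting a small multiple of the corresponding (positive) eigenfunction from $u_\varepsilon$ would, by convexity of $f$, produce a strict supersolution strictly below $u_\varepsilon$, and another monotone iteration would yield a solution below $u_\varepsilon$, contradicting minimality. Hence $u_\varepsilon$ is stable.

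The main obstacle I expect is upgrading $u_\varepsilon$ to $L^\infty(\Omega)$. The plan there is a bootstrap using \Cref{Prop:RegularityLp}: starting from $u \in H^\s_\Omega \subset L^{2^*_\s}(\Omega)$ we have $u_\varepsilon \in L^{2^*_\s}$, and then from the equation combined with $u_\varepsilon \le (1-\varepsilon) u$ and convexity,
\[
(1-\varepsilon) f(u_\varepsilon) \le (1-\varepsilon) f((1-\varepsilon) u) \le (1-\varepsilon)^2 f(u) + (1-\varepsilon)\varepsilon\, f(0),
\]
so $\fraclaplacian u_\varepsilon$ has at each step strictly better integrability than $f(u)$ does, by a factor less than $1$; iterating \Cref{Prop:RegularityLp} (analogous to the bootstrap carried out in \Cref{Remark:BoundedSolutions} for globally Lipschitz $f_k$) then pushes $u_\varepsilon$ into $L^\infty(\Omega)$ after finitely many steps. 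The delicate point is that the strict factor $(1-\varepsilon) < 1$ is what allows each iteration to strictly improve the exponent, so the argument has to be set up so that the bootstrap terminates; once it does, \Cref{Remark:BoundedSolutions} promotes $u_\varepsilon$ to a classical pointwise solution, completing the proof.
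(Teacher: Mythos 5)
Your construction of the supersolution $\Phi_\varepsilon(u)$ via the ODE, the verification of its concavity, the use of \Cref{Lemma:KatoIneq}, and the monotone iteration to produce a minimal solution between $0$ and $\Phi_\varepsilon(u)$ all match the paper's proof closely. Your stability argument via minimality is more elaborate than necessary (the paper simply notes $f'(u_\varepsilon) \leq f'(u)$ since $u_\varepsilon \leq u$ and $f$ is convex, so stability of $u$ passes directly to $u_\varepsilon$ with room to spare because of the extra factor $(1-\varepsilon)$), but it is not incorrect.

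However, the bootstrap to $L^\infty$ is where the proposal breaks down. The inequality
\begin{equation}
(1-\varepsilon)\, f(u_\varepsilon) \;\leq\; (1-\varepsilon)^2 f(u) + (1-\varepsilon)\varepsilon\, f(0)
\end{equation}
does \emph{not} improve integrability: the right-hand side is just a constant multiple of $f(u)$ plus a constant, which lies in exactly the same $L^p$ class as $f(u)$. The factor $(1-\varepsilon)^2 < 1$ is irrelevant for $L^p$ membership, so repeated application of \Cref{Prop:RegularityLp} stalls after the first step and never reaches an exponent above $n/(2\s)$. Moreover, your starting point $u \in H^\s_\Omega \subset L^{2^*_\s}(\Omega)$ is not available: the lemma only assumes $u$ is a stable $L^1$-weak solution, with no energy-space hypothesis (that assumption appears only in \Cref{Th:HsAppEps}, where this lemma is invoked).

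The paper's actual mechanism is different and essential. It proves the growth estimate
\begin{equation}
f(\Phi_\delta(t)) \;\leq\; \frac{C_\circ}{\delta}\,(1+t) \quad \text{for all } t\geq 0,
\end{equation}
which reduces the composition $f\circ\Phi_\delta$ to \emph{linear} growth regardless of how fast $f$ itself grows. Since $u_{\delta,1} \leq \Phi_\delta(u)$, this gives $f(u_{\delta,1}) \in L^q$ whenever $u \in L^q$, so \Cref{Prop:RegularityLp}\,(i)–(ii) yields a genuine gain of one Sobolev step from $L^1$ to $L^{p_1}$. Crucially, the paper then \emph{iterates the entire construction}: it builds $u_{\delta,2}$ below $\Phi_\delta(u_{\delta,1})$ solving the equation with factor $(1-\delta)^2$, and so on, gaining one Sobolev step per iteration; after finitely many steps $k_0 = k_0(n,\s)$ the exponent exceeds $n/(2\s)$ and \Cref{Prop:RegularityLp}\,(iii) gives boundedness. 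The factor in the final equation is $(1-\delta)^{k_0}$, and choosing $\delta$ so that $(1-\delta)^{k_0} = 1-\varepsilon$ recovers the desired equation. Your proposal runs the construction only once and tries to bootstrap by convexity alone, which cannot work without the linear-growth estimate and the repeated composition with $\Phi_\delta$.
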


\begin{proof}

	For $\delta \in (0,1)$, let $\Phi_\delta$ be the solution of the initial value problem
	\begin{equation}
		\label{Eq:ODEPhiDelta}
		\beqc{\PDEsystem}
		\Phi_\delta(t)' f(t) & = & (1 - \delta)  f(\Phi_\delta(t)) & \text{ for } t >0,\\
		\Phi_\delta (0) & = & 0, &
		\eeqc
	\end{equation}
	defined implicitly by
	\begin{equation}
		\label{Eq:PhiIntegrals}
		\int_0^{\Phi_\delta(t)} \dfrac{\df \tau }{f(\tau)} = (1-\delta) \int_0^t \dfrac{\df \tau }{f(\tau)} \quad \text{ for } t> 0.
	\end{equation}
	From \eqref{Eq:ODEPhiDelta} and \eqref{Eq:PhiIntegrals}, using that $f$ is positive, it follows that $\Phi_\delta$ is strictly increasing and that $0 \leq \Phi_\delta(t) \leq t$ for all $t\geq 0$ (and thus, since $f$ is nondecreasing, we have $\Phi_\delta'(t) \leq 1 - \delta$ for $t\geq 0$). 
	Now, using this last assertion, the ODE for $\Phi_\delta$, and that $f$ is nonnegative, nondecreasing, and convex, we obtain
	\begin{equation}
		\begin{split}
			\Phi_\delta''(t) & =  (1-\delta) \dfrac{f'(\Phi_\delta(t)) \Phi_\delta' (t) f(t) - f(\Phi_\delta(t)) f'(t)}{f(t)^2} \\
			&= (1-\delta) \dfrac{f(\Phi_\delta(t))}{f(t)^2} \Big ( f'(\Phi_\delta(t)) (1- \delta) -  f'(t) \Big ) \\
			&\leq (1-\delta) \dfrac{f(\Phi_\delta(t))}{f(t)^2} \Big ( f'(\Phi_\delta(t)) -  f'(t) \Big ) \\
			&\leq 0,
		\end{split}
	\end{equation}
	showing that $\Phi_\delta$ is concave.
	Therefore, we can apply \Cref{Lemma:KatoIneq} to see that $\Phi_\delta(u) \in L^1(\Omega)$ and that 
	\begin{equation}
		\int_\Omega  \Phi_\delta(u) \fraclaplacian \varphi \d x +   \int_{\R^n \setminus \Omega}\Phi_\delta (u) \ncal_\s \varphi \d x  \geq \int_\Omega \Phi_\delta'(u) f(u) \varphi \d x = \int_\Omega (1 - \delta) f(\Phi_\delta(u)) \varphi \d x,
	\end{equation}
	for every $\varphi$ such that $\varphi$ and $\fraclaplacian \varphi$ are bounded in $\Omega$, and such that $\varphi \geq 0$  in $\Omega$ and $\varphi \equiv 0$  in $\R^n\setminus \Omega$.
	Hence, recalling that $u\equiv 0$ in $\R^n\setminus \Omega$ and that $\Phi_\delta (0)=0$ we have proved that $\Phi_\delta(u)$ is an $L^1$-weak supersolution to 
	\begin{equation}
		\label{Eq:SemilinearProblemDelta}
		\beqc{\PDEsystem}
		\fraclaplacian w & = & (1 - \delta)  f(w) & \text{ in } \Omega,\\
		w & = & 0 & \text{ in } \R^n\setminus \Omega.
		\eeqc
	\end{equation}
	Now, using that $f(0) > 0$, it follows that $0$ is an $L^1$-weak subsolution to \eqref{Eq:SemilinearProblemDelta}.
	Therefore, by the monotone iteration method (see \Cref{Prop:MonotoneIteration}) we obtain a solution $u_{\delta,1}$ to \eqref{Eq:SemilinearProblemDelta} with $0 \leq u_{\delta,1} \leq \Phi_\delta (u)\leq u$  in $\Omega$.
	
	Note that if $\Phi_\delta$ is bounded,\footnote{This is equivalent to say that $ \int_0^{+\infty} \frac{\df \tau}{f(\tau)} = +\infty,$ and thus the boundedness or not of   $\Phi_\delta$ is completely independent of $\delta\in (0,1)$.}
	we already have the desired bounded solution setting $\delta = \varepsilon$ and  $u_\varepsilon := u_{\varepsilon,1}$.
	Otherwise, we need to iterate the previous procedure and, as we will see, in each step we will improve the integrability of the solution, obtaining after a finite number of iterations a bounded solution.

	Repeating the previous arguments with $u_{\delta,1}$ playing the role of $u$ (i.e., taking $\Phi_\delta (u_{\delta,1} )$ as a supersolution), and replacing $f$ by $(1-\delta)f$ (noticing that this does not affect the definition of $\Phi_\delta$), we obtain a function $u_{\delta,2}$ which is an $L^1$-weak solution to 
	\begin{equation}
		\label{Eq:SemilinearProblemDelta2}
		\beqc{\PDEsystem}
		\fraclaplacian u_{\delta,2} & = & (1 - \delta)^2  f(u_{\delta,2}) & \text{ in } \Omega,\\
		u_{\delta,2} & = & 0 & \text{ in } \R^n\setminus \Omega,
		\eeqc
	\end{equation}
	such that  $0 \leq u_{\delta,2} \leq \Phi_\delta ( u_{\delta,1} ) \leq u$ in $\Omega$.
	Iterating this process, for each positive integer $k$ we obtain 	a function $u_{\delta,k}$ which is an $L^1$-weak solution to 
	\begin{equation}
		\label{Eq:SemilinearProblemDeltak}
		\beqc{\PDEsystem}
		\fraclaplacian u_{\delta,k} & = & (1 - \delta)^k f(u_{\delta,k}) & \text{ in } \Omega,\\
		u_{\delta,k} & = & 0  & \text{ in } \R^n\setminus \Omega,
		\eeqc
	\end{equation}
	such that  $0 \leq u_{\delta,k} \leq \Phi_\delta^k (u)\leq u$ in $\Omega$.
	
	We will prove that there exists a positive integer $k_0$ for which $u_{\delta,k_0}$ is bounded. 
	To do so, we show first that for every $\delta\in (0,1)$ it holds
	\begin{equation}
		\label{Eq:GrowthfPhi_eps}
		f(\Phi_\delta (t))  \leq \dfrac{C_\circ}{\delta} (1 + t) \quad \text{ for all } t \geq 0 
	\end{equation}
	and for some constant $C_\circ$ depending only on $f$.
	To prove \eqref{Eq:GrowthfPhi_eps}, we set
	\begin{equation}
		\phi(t) := \int_0^t \dfrac{\df \tau }{f(\tau)}
	\end{equation}
	and noting that $\phi$ is increasing and concave in $(0,+\infty)$ ---since $\phi'(t) = 1/f(t)$ and $\phi''(t) = - f'(t)/f^2(t)$ for $t>0$---, for every $a,b\in (0,+ \infty)$ with $a \leq b$ we have
	\begin{equation}
		\phi(b) \leq \phi(a) + \phi'(a)(b - a) = \phi(a) + \dfrac{b - a}{f(a)}.
	\end{equation}
	Taking $a = \Phi_\delta (b)$ and using that $\phi(a) = \phi (\Phi_\delta (b)) = (1-\delta) \phi(b)$ ---see \eqref{Eq:PhiIntegrals}---, we obtain
	\begin{equation}
		\phi(b) \leq (1-\delta) \phi(b)+ \dfrac{b - \Phi_\delta (b)}{f(\Phi_\delta (b))},
	\end{equation}
	and therefore 
	\begin{equation}
		\delta f(\Phi_\delta (b)) \leq \dfrac{b - \Phi_\delta (b)}{\phi(b)} \leq \dfrac{b}{\phi(b)}.
	\end{equation}
	Hence, on the one hand, for $b\geq 1$ we have $\phi(b) \geq \phi(1)$, and thus
	\begin{equation}
		f(\Phi_\delta (b))  \leq \dfrac{b}{ \delta \phi(1)}.
	\end{equation}
	On the other hand, for $b\leq 1$ it follows readily that
	\begin{equation}
		f(\Phi_\delta (b))  \leq f(b) \leq  f(1),
	\end{equation}
	and using that $\delta<1$ we establish \eqref{Eq:GrowthfPhi_eps} with $C_\circ = \max\{ f(1), \phi(1)^{-1}\}$.
	
	Once we have proved \eqref{Eq:GrowthfPhi_eps}, let us show that there exists a positive integer $k_0$ (depending only on $n$ and $\s$) such that $u_{\delta,k_0}$ is bounded. 
	First, we notice that, since $u_{\delta,1} \leq \Phi_\delta (u)$, \eqref{Eq:GrowthfPhi_eps} yields
	\begin{equation}
		f(u_{\delta,1}) \leq f(\Phi_\delta (u)) \leq \dfrac{C_\circ}{\delta} (1 + u)
	\end{equation}
	and since $u\in L^1(\Omega)$, by \Cref{Prop:RegularityLp}~\textit{(i)} it follows that $u_{\delta,1}\in L^{p_1}(\Omega)$ for some $p_1 < \frac{n}{n - 2\s} $.
	Repeating this argument we have
	\begin{equation}
		f(u_{\delta,2}) \leq f(\Phi_\delta (u_{1,\delta})) \leq \dfrac{C_\circ}{\delta} (1 + u_{1,\delta})
	\end{equation}
	and since $u_{\delta,1}\in L^{p_1}(\Omega)$, by \Cref{Prop:RegularityLp}~\textit{(ii)} we get that $u_{\delta,2}\in L^{p_2}(\Omega)$ for $p_2 := \frac{n p_1}{n - 2 \s p_1}$.
	Iterating this procedure, it is easy to see that after a finite number of iterations we reach a power $p_{k_0} > n/(2\s)$, and thus \Cref{Prop:RegularityLp}~\textit{(iii)} yields that $u_{\delta, k_0}\in L^\infty(\Omega)$.

	Taking $\delta = \delta(\varepsilon)$ such that $(1 - \delta)^{k_0} = (1-\varepsilon)$, we have obtained the desired  solution by setting $u_\varepsilon := u_{\delta(\varepsilon),k_0}$.
	Finally, by convexity we have that $f'(u_\varepsilon) \leq f'(u)$ in $\Omega$ and thus $u_\varepsilon$ is stable.
\end{proof}

Having proved the previous result, we conclude the section by establishing \Cref{Th:HsAppEps}.

\begin{proof}[Proof of \Cref{Th:HsAppEps}]
	For each $\varepsilon\in (0,1)$, let $u_\varepsilon$ be the bounded stable solution given by \Cref{Prop:uepsilon}. 
	By construction, $0 \leq u_\varepsilon \leq u$ in $\R^n$ for all $\varepsilon \in (0,1)$, and therefore $\norm{u_\varepsilon}_{L^2(\Omega)} \leq \norm{u}_{L^2(\Omega)}$. 
	Now, since $u_\varepsilon$ is a bounded solution (and thus regular enough), we can multiply the equation that it satisfies by $u_\varepsilon$ and integrate by parts to obtain
	\begin{equation}
		\label{Eq:H1ApproxProof}
		[u_\varepsilon]^2_{H^\s_\Omega} = (1 - \varepsilon) \int_{\Omega} f(u_\varepsilon) u_\varepsilon \d x \leq \int_{\Omega} f(u) u \d x =  [u]^2_{H^\s_\Omega} ,
	\end{equation}
	where we have used the monotonicity of $f$ and the fact that $u\in H^\s_{\Omega,0}$ (and thus one can perform one integration by parts in the $L^1$-weak formulation, see \Cref{Remark:L1WeakSolWhichAreHs}).
	Hence, $u_\varepsilon$ are uniformly bounded in $H^\s_\Omega$, and therefore there exists a sequence $\{\varepsilon_k\}_{k \in \N}$ with $\varepsilon_k \to 0$ as $k \to +\infty$ such that $u_{\varepsilon_k} \rightharpoonup v$ in $H^\s_\Omega$ for some $v \in H^\s_\Omega$.
	Furthermore, since $f$ is nondecreasing, $f(u_\varepsilon) \leq f(u)$ and thus, since $f(u) \in L^1(\Omega)$, by dominated convergence we have $f(u_{\varepsilon_k}) \to f(v)$ in $L^1(\Omega)$.
	Thus, taking the limit $k\to + \infty$ in the weak formulation of the equation for $u_{\varepsilon_k}$ we obtain that $v\in H^\s_\Omega$ is an $L^1$-weak solution to \eqref{Eq:SemilinearPbHsApp}.
	Hence, by \Cref{Prop:UniquenessIfNotEig}, $u = v$.
	
	Finally, we show that $u_{\varepsilon_k} \to u$ in $H^\s_\Omega$ as $k \to +\infty$.
	By the first equality in \eqref{Eq:H1ApproxProof}, this is equivalent to showing that $(1-\varepsilon_k) f(u_{\varepsilon_k})u_{\varepsilon_k}$ converges to $f(u) u$ in $L^1(\Omega)$, and this follows from the inequality in \eqref{Eq:H1ApproxProof} by dominated convergence, since $ [u]^2_{H^\s_\Omega}<+\infty$.
\end{proof}

\section{Application: regularity of stable solutions in low dimensions}
\label{Sec:Applications}

In this section we show how to combine the approximation results of the previous section with universal a priori estimates for stable solutions to establish the regularity of energy solutions which are stable.
In particular, we will use the following a priori estimate from \cite{CabreSanzPerela-HalfLaplacian}, which provides a universal Hölder estimate in low dimensions for half-Laplacian semilinear equations.
Here and through the paper, for $\s\in (0,1)$ we denote by $L^1_{\s} (\R^n)$ the space of measurable functions for which the norm
\begin{equation}
	\label{Eq:DefL1s}
	\norm{w}_{L^1_{\s} (\R^n)} := \int_{\R^n} \dfrac{ |w(x)|}{(1 + |x|^2)^{\frac{n + 2\s}{2}}} \d x
\end{equation}
is finite.

\begin{theorem}[\cite{CabreSanzPerela-HalfLaplacian}]
	\label{Th:Holder}
	Let $n\geq 1$ and $u\in C^2(B_1)\cap L^1_{1/2}(\R^n)$ be a stable solution to $(-\Delta)^{1/2}u = f(u)$ in $B_1 \subset \R^n$, where $f$ is a nonnegative convex $C^{1,\gamma}$ function for some $\gamma >0$. 
	
	Then, 
	\begin{equation}
		\label{Eq:Holder}
		\norm{u}_{C^\beta (\overline{B}_{1/2})} \leq C   \norm{u}_{L^1_{1/2} (\R^n)}  \qquad \text{if }\  1 \leq n\leq 4,
	\end{equation}
	for some dimensional constants $\beta>0$ and $C$.
\end{theorem}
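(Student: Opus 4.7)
The plan is to adapt the framework of \cite{CabreFigalliRosSerra-Dim9} (for $-\Delta u = f(u)$ in $n \leq 9$) to the half-Laplacian setting by means of the Caffarelli-Silvestre extension. The strategy has two stages: first establish a universal $L^\infty$ bound $\norm{u}_{L^\infty(B_{3/4})} \leq C \norm{u}_{L^1_{1/2}(\R^n)}$ in dimensions $1\leq n \leq 4$, and then convert it into the Hölder estimate on $B_{1/2}$ via interior regularity for the half-Laplacian. Indeed, once $u$ is bounded, $f(u)$ is bounded too (because $f$ is continuous on the bounded range of $u$), and Schauder-type interior estimates for $(-\Delta)^{1/2}$ with bounded right-hand side yield $u \in C^\beta(\overline B_{1/2})$ for some $\beta > 0$.

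First I would extend $u$ harmonically to the upper half-space: let $\bar u(x,y)$ satisfy $\Delta_{x,y} \bar u = 0$ in $\R^{n+1}_+$ with trace $u$ on $\{y=0\}$, so that $-\partial_y \bar u|_{y=0} = (-\Delta)^{1/2} u = f(u)$. The stability hypothesis then transfers to
\begin{equation*}
\int_{B_1} f'(u)\,\zeta(x,0)^2 \d x \leq \int_{\R^{n+1}_+} |\nabla_{x,y} \zeta|^2 \d x\d y
\end{equation*}
for every Lipschitz $\zeta$ supported in $\overline{\R^{n+1}_+}$ with compact trace in $B_1$. Moreover, differentiating the equation shows that every directional derivative $v := e \cdot \nabla_x u$ solves the linearized problem $(-\Delta)^{1/2} v = f'(u)\, v$, which extends harmonically as $\bar v = e\cdot \nabla_x \bar u$ with boundary condition $-\partial_y \bar v|_{y=0} = f'(u)\, v$.

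The heart of the argument is to insert into the stability inequality test functions built out of $\nabla_x \bar u$ and $\partial_y \bar u$, multiplied by a suitable (radial) cutoff $\eta$ in $\overline{\R^{n+1}_+}$. Integrating by parts in the bulk using harmonicity of $\bar u$ and on $\{y=0\}$ using the linearized equation, and then using the convexity of $f$ to cancel the $f'(u)$ contributions, one obtains a Caccioppoli-type bound
\begin{equation*}
\int_{B_r^+} |\nabla_{x,y} \bar u|^2 \d x \d y \leq \frac{C}{(R-r)^2}\int_{B_R^+} \bar u^2 \d x \d y + \text{lower order}
\end{equation*}
together with an improved integrability estimate in Morrey form. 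A Moser/De Giorgi iteration then upgrades this to the pointwise bound. The dimension restriction $1\leq n \leq 4$ enters exactly at the level of a Simons-type algebraic inequality for a quadratic form in the extended derivatives, whose nonnegativity forces $n+1 \leq 5$.

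The hard part will be the Simons-type algebraic step where the dimension threshold emerges: one must identify the correct geometric quantity (a combination of $|\nabla_x\bar u|$, $\partial_y\bar u$ and their radial counterparts) so that, after integration by parts, the quadratic form appearing as a lower bound is nonnegative precisely when $n\leq 4$. A second nontrivial point will be converting the $L^2$-integrals of $\bar u$ on half-balls back to the $L^1_{1/2}(\R^n)$-norm of $u$ stated in \eqref{Eq:Holder}; this uses the explicit Poisson kernel of the half-space, which shows that the tail of $\bar u$ in $y$ is controlled by $\norm{u}_{L^1_{1/2}(\R^n)}$ thanks to the weight $(1+|x|^2)^{-(n+1)/2}$.
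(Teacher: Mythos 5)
This statement is not proved in the paper at all: \Cref{Th:Holder} is imported verbatim from \cite{CabreSanzPerela-HalfLaplacian}, and the present article only uses it as a black box in the proof of \Cref{Th:HolderWeak}. So there is no internal argument for you to match, and what you have written is a programme for re-proving the cited result rather than a proof. As a programme it has two genuine gaps. First, you yourself flag the decisive step as open: identifying the quantity whose coercivity fails exactly at the dimensional threshold. In the actual arguments of \cite{CabreFigalliRosSerra-Dim9} and \cite{CabreSanzPerela-HalfLaplacian} this is not a pointwise Simons-type inequality but the outcome of inserting test functions of the form $(x,y)\cdot\nabla\bar u$ (and suitable power-weighted cutoffs) into the stability inequality for the extension, combined with a blow-down/compactness argument and the classification of homogeneous stable solutions; the restriction $1\leq n\leq 4$ comes from a numerical condition on the admissible exponents in a weighted (Hardy-type) inequality, and none of this is supplied or even correctly located in your sketch. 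Without it there is no estimate at all, so the proposal cannot be accepted.

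Second, your two-stage reduction (first an $L^\infty$ bound on $B_{3/4}$, then ``Schauder with bounded right-hand side'' to get $C^\beta$) cannot produce the estimate as stated. The constant $C$ in \eqref{Eq:Holder} is dimensional, in particular independent of $f$, and the bound is linear in $\norm{u}_{L^1_{1/2}(\R^n)}$. Once you only know $\norm{u}_{L^\infty(B_{3/4})}\leq C\norm{u}_{L^1_{1/2}(\R^n)}$, the best you can say about the right-hand side is $\norm{f(u)}_{L^\infty(B_{3/4})}\leq \max_{|t|\leq C\norm{u}_{L^1_{1/2}}}|f(t)|$, which depends on $f$ in an uncontrolled way; interior Schauder estimates would then give a Hölder bound whose constant depends on $f$, not the universal bound claimed. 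In the cited proofs the logic is the reverse: the Hölder (or Morrey-type energy) estimate is obtained directly from stability with $f$-independent constants, and boundedness is a consequence, not an intermediate step. You would need to restructure the argument accordingly, or simply cite \cite{CabreSanzPerela-HalfLaplacian} as the paper does.
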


From the previous universal estimate and using \Cref{Th:HsAppConvex} we establish the regularity of $H^{1/2}_\Omega$ stable solutions in low dimensions.

\begin{proof}[Proof of \Cref{Th:HolderWeak}]
	First, by \Cref{Th:HsAppConvex},
	there exists a sequence $\{u_k\}_{k \in \N}$ of bounded stable solutions to
	\begin{equation}
		\label{Eq:SemilinearPbHsAppfkConvexBis}
		\beqc{\PDEsystem}
		\halflaplacian u_k & = & f_k(u_k)& \text{ in } \Omega,\\
		u_k & = & g& \text{ in } \R^n\setminus \Omega,
		\eeqc
	\end{equation}
	where $\{f_k\}_{k \in \N} \subset C^{1,\gamma}(\R)$ is a sequence of globally Lipschitz nonnegative convex nonlinearities converging pointwise to $f$ in $\R$.
	Moreover, $-M \leq u_k \leq u$  in $\R^n$ for some $M$ and $u_k \to u$ in $H^{1/2}_\Omega$  as $k\to +\infty$.

	Now, given a point $x_0 \in \Omega$, let $R>0$ be such that $B_R(x_0) \subset\subset \Omega$.
	On the one hand, since each $u_k$ is bounded in $\Omega$ and $f_k\in C^{1,\gamma}(\R)$, by standard regularity for the half-Laplacian it follows that $u_k\in C^2(B_R(x_0))$ (see Corollaries~2.3 and 2.5 in \cite{RosOtonSerra-Regularity}).
	On the other hand, by the assumption \eqref{Eq:Assumptionsg}, we get that $u\in L^1_{1/2}(\R^n)$ ---see \Cref{Sec:ExteriorCondition}.
	Thus, since $1 \leq n\leq 4$, the universal estimate \eqref{Eq:Holder} yields
	\begin{equation}
		\norm{u_k}_{C^\beta (\overline{B_{R/2}(x_0)})} \leq C   \norm{u_k}_{L^1_{1/2} (\R^n)} \leq C   \norm{u}_{L^1_{1/2} (\R^n)}
	\end{equation}
	for some universal $\beta\in (0,1)$ depending only on $n$, and some constant $C$ depending on $n$, $M$, and $R$.
	As a consequence, by the compact embedding of Hölder spaces, there exists a subsequence (which we relabel as $\{u_k\}_{k \in \N}$) such that $u_k \to v$ for some $v\in C^{\tilde\beta} (\overline{B_{R/2}(x_0)})$ with $\tilde\beta< \beta$.
	Moreover, we already know that $u_k \to u$ a.e. in $\overline{B_{R/2}(x_0)}$, and thus $v = u$ in $\overline{B_{R/2}(x_0)}$.
	Since this holds for every point $x_0\in \Omega$, we obtain that $u\in C^{\tilde\beta} (\Omega)$.
	The conclusion then follows from standard interior estimates for the half-Laplacian using the regularity of~$f$.	
\end{proof}

\section{Unbounded $L^1$-weak stable solutions in low dimensions}
\label{Sec:Counterexamples}

In this section we show that the assumption on $u$ belonging to $H^\s_{\Omega}$ is crucial in our statements.
To do it, we provide a counterexample for our approximation result which is an $L^1$-weak solution not belonging to the energy space.

Recall (see \cite{RosOton-Gelfand}) that the fractional Laplacian of a function of the form $|x|^{-\beta}$ is given by
\begin{equation}
	\label{Eq:FracLapPower}
	\fraclaplacian |x|^{-\beta} = 2^{2\s}\,
	\frac{\Gamma\left(\frac{\beta+2\s}{2}\right)\Gamma\left(\frac{n-\beta}{2}\right)}
	{\Gamma\left(\frac{n-\beta-2\s}{2}\right)\Gamma\left(\frac{\beta}{2}\right)} |x|^{-\beta-2\s} \,, \quad \text{ for } \beta\in(0,n-2\s).
\end{equation}
For $p>1$, we consider the function
\begin{equation}
	u_p(x) := |x|^{-\frac{2\s}{p - 1}} - 1,
\end{equation}
and, for convenience, we define as well $g_p :\R^n \setminus B_1 \to \R$ by 
\begin{equation}
	g_p(x) := |x|^{-\frac{2\s}{p - 1}} - 1, \quad \text{ for } x\in \R^n \setminus B_1.
\end{equation}

From \eqref{Eq:FracLapPower}, a formal computation shows that
\begin{equation}
	\fraclaplacian u_p (x) = \Lambda_{n,\s,p} |x|^{-\frac{2\s}{p - 1} - 2\s} =  \Lambda_{n,\s,p} |x|^{-\frac{2\s p }{p - 1}} =  \Lambda_{n,\s,p} (1 + u_p(x))^p
\end{equation} 
where
\begin{equation}
	\Lambda_{n,\s,p} := 2^{2\s}\,
	\frac{\Gamma\left(\frac{\s}{p - 1} + \s \right)\Gamma\left(\frac{n}{2} - \frac{\s}{p - 1}\right)}
	{\Gamma\left(\frac{n-2\s}{2} - \frac{\s}{p - 1}\right)\Gamma\left(\frac{\s}{p - 1}\right)}.
\end{equation}
Therefore, if $p> n/(n-2\s)$, we have that $(1 + u_p)^p \in L^1(B_1)$ $u_p$ and thus $u_p$ is an $L^1$-weak solution to the problem
\begin{equation}
	\label{Eq:SemilinearProblemPower}
	\beqc{\PDEsystem}
	\fraclaplacian u & = & \Lambda_{n,\s,p} (1 + u)^p & \text{ in } B_1,\\
	u & = & g_p & \text{ in } \R^n\setminus B_1.
	\eeqc
\end{equation}
In addition, note that if $p \leq (n + 2\s)/(n - 2\s)$, then
\begin{equation}
	|x|^{-\frac{2\s}{p - 1} -\s} \notin L^2(B_1),
\end{equation}
and therefore $u_p \notin H^\s_{B_1}$.
As a summary, we have that for $p$ such that
\begin{equation}
	\label{Eq:PcondL1NotEnergy}
	\dfrac{n}{n - 2\s} < p \leq \dfrac{n + 2\s}{n - 2\s}
\end{equation}
the function $u_p$ is an $L^1$-weak solution to \eqref{Eq:SemilinearProblemPower} not belonging to $H^\s_{B_1}$. 

Let us  now see that, for some values of $p$ satisfying \eqref{Eq:PcondL1NotEnergy}, $u_p$ is stable in $B_1$.
Note that this is equivalent to check whether the inequality
\begin{equation}
	p \Lambda_{n,\s,p} \int_{B_1} \dfrac{\xi^2}{|x|^{2\s}} \d x \leq \seminorm{\xi}_{H^\s_{B_1}}^2
\end{equation}
holds for every smooth function $\xi \in C^\infty_c(B_1)$.
Recall that fractional Hardy inequality reads
$$
H_{n,\s}\int_{\R^n} \dfrac{\xi^2}{|x|^{2\s}} \d x \leq \dfrac{c_{n,\s}}{2} \int_{\R^n} \int_{\R^n} \dfrac{|\xi (x) - \xi (y)|^2}{|x-y|^{n+2\s}} \d x \d y \quad \text{ where } H_{n,\s}=2^{2\s}\frac{\Gamma^2\left(\frac{n+2\s}{4}\right)}{\Gamma^2\left(\frac{n-2\s}{4}\right)}.
$$
Then, $u_p$ is stable in $B_1$ if and only if $p \Lambda_{n,\s,p}  \leq H_{n,\s}$, which  is explicitly written as
\begin{equation}
	\label{Eq:GammasP}
	p \frac{\Gamma\left(\frac{\s}{p - 1} + \s \right)\Gamma\left(\frac{n}{2} - \frac{\s}{p - 1}\right)}
	{\Gamma\left(\frac{n-2\s}{2} - \frac{\s}{p - 1}\right)\Gamma\left(\frac{\s}{p - 1}\right)} \leq \frac{\Gamma^2\left(\frac{n+2\s}{4}\right)}{\Gamma^2\left(\frac{n-2\s}{4}\right)}.
\end{equation}
Now, if we choose
\begin{equation}
	p := \dfrac{n}{n - 2\s} + \varepsilon \quad \textrm{ for } \varepsilon \leq 2\s /(n - 2\s),
\end{equation}
then \eqref{Eq:GammasP} is satisfied if $\varepsilon$ is small enough.
To check this last assertion, note that
\begin{equation}
	\frac{n-2\s}{2} - \frac{\s}{p - 1} = \frac{n-2\s}{2}  -  \dfrac{1}{\frac{2}{n - 2\s} + \frac{\varepsilon}{\s}} = \varepsilon \dfrac{n - 2\s}{\frac{4\s}{n - 2\s} + 2\varepsilon}\end{equation}
and thus
\begin{equation}
	\Gamma\left(\frac{n-2\s}{2} - \frac{\s}{p - 1}\right) \to +\infty \quad \text{ as } \varepsilon \to 0.
\end{equation}
From this, and noticing that the other expressions  appearing in $\Lambda_{n,\s,p}$ which involve the Gamma function are uniformly controlled as $\varepsilon \to 0$, we see that for $\varepsilon$ small enough \eqref{Eq:GammasP} holds, and thus $u_p$ is stable.

As a conclusion, we have obtained an unbounded $L^1$-weak solution which does not belong to the energy space but which is stable.
This provides a counterexample of our approximation result (\Cref{Th:HsAppEps}) for stable solutions if we do not assume that the solution belongs to the energy space.
Indeed, if $u_p$ could be approximated by bounded stable solutions, in view of the regularity results of \cite{CabreSanzPerela-HalfLaplacian} for $\s =1/2$ and $1 \leq n \leq 4$, and arguing as in the proof of \Cref{Th:HolderWeak}, it would follow that $u_p$ is bounded in $B_{1/2}$, arriving at a contradiction.

\appendix

\section{An ``intermediate value theorem'' in fractional Sobolev spaces }
\label{Sec:IVTh}

In this appendix we recall an ``intermediate value theorem'' for functions in the fractional Sobolev space $W^{\s,p}$, with $p\s \geq 1$, which has been used in the discussion of \Cref{Remark:StrictConvexity}. 
In this result we denote by $|E|$ the $n$-dimensional Lebesgue measure of a measurable set $E$.

\begin{proposition}
	\label{Prop:IVT}
	Let $u\in W^{s,p} (B_1)$ be such that $|\{u \leq a\}\cap B_1|>0$ and $|\{u \geq b\}\cap B_1|>0$ for some $a,b\in \R$ with $a< b$.
	If $sp\geq 1$, then $|\{a < u < b\}\cap B_1|>0$.
\end{proposition}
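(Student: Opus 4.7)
The plan is to argue by contradiction: assume $|\{a < u < b\}\cap B_1| = 0$, so $u$ takes values a.e.\ in $(-\infty, a] \cup [b, +\infty)$ on $B_1$, while both $\{u \leq a\} \cap B_1$ and $\{u \geq b\} \cap B_1$ have positive measure. First I would reduce to a characteristic function by composing with the Lipschitz truncation $\psi(t) := \min\{1, \max\{0, (t-a)/(b-a)\}\}$: since composition with a Lipschitz function preserves $W^{s,p}$ (by the trivial pointwise estimate on the Gagliardo seminorm), $v := \psi\circ u$ belongs to $W^{s,p}(B_1)$, and the assumption forces $v = \chi_E$ a.e., with $E := \{u \geq b\} \cap B_1$, where both $E$ and $B_1 \setminus E$ have positive measure. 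The problem thus reduces to showing that no nontrivial characteristic function can lie in $W^{s,p}(B_1)$ when $sp \geq 1$.

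Next I would reduce to one dimension by slicing. Pick Lebesgue density points $x_0 \in E$ and $y_0 \in B_1 \setminus E$ with the segment $[x_0, y_0] \subset B_1$, set $e := (y_0 - x_0)/|y_0 - x_0|$, and let $\Pi$ be the hyperplane through the midpoint of $[x_0, y_0]$ orthogonal to $e$. The standard Fubini decomposition of the Gagliardo seminorm (restricting the integration to pairs $(x, y)$ whose difference lies in a small cone around $\R e$) gives
\[
[\chi_E]_{W^{s,p}(B_1)}^p \;\geq\; c\int_{\Pi} [\chi_{E_z}]_{W^{s,p}(I_z)}^p \, \mathrm{d}z,
\]
where $I_z := \{t : z + te \in B_1\}$ and $E_z := \{t \in I_z : z + te \in E\}$. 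Since $x_0$ and $y_0$ share a common projection $z_0 \in \Pi$, applying Fubini to narrow cylinders around $x_0$ and $y_0$ together with the density-point conditions yields a positive $(n{-}1)$-dimensional neighborhood of $z_0$ on which each 1D slice satisfies both $|E_z| > 0$ and $|I_z \setminus E_z| > 0$.

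It remains to invoke the one-dimensional fact: if $F$ is a measurable subset of an open interval $I$ with $|F|, |I \setminus F| > 0$ and $sp \geq 1$, then $[\chi_F]_{W^{s,p}(I)} = +\infty$. For $sp > 1$ this follows at once from the Sobolev embedding $W^{s,p}(I) \hookrightarrow C^{0, s-1/p}(\overline{I})$, since continuous $\{0,1\}$-valued functions on an interval are constant. In the borderline case $sp = 1$ one argues instead via the embedding $W^{1/p, p}(I) \hookrightarrow \mathrm{VMO}(I)$, which characteristic functions of nontrivial sets violate at any essential measure-theoretic boundary point of $F$; alternatively, a direct dyadic computation of the Gagliardo integral on intervals shrinking around such a boundary point yields divergence.

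The main obstacle is precisely the critical case $sp = 1$ of this 1D step: the embedding into continuous functions is lost, so the pointwise continuity argument must be replaced by a finer estimate based on Lebesgue density points and a dyadic analysis of pairs $(x, y) \in F \times (I\setminus F)$ at arbitrarily small scales. The supercritical case $sp > 1$ and the two structural reductions (Lipschitz composition and slicing) are routine.
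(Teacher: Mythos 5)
Your proposal is correct in outline, but it takes a genuinely different and considerably longer route than the paper. The paper truncates $u$ to $u_{a,b}:=\min\{\max\{u,a\},b\}$ (the same Lipschitz truncation you use, up to normalization), observes that its Gagliardo seminorm is \emph{exactly} a constant multiple of $\int_E\int_{B_1\setminus E}|x-y|^{-n-sp}\,\mathrm{d}x\,\mathrm{d}y$, notes that on the bounded set $B_1\times B_1$ the inequality $sp\ge 1$ gives $|x-y|^{-n-1}\le C\,|x-y|^{-n-sp}$, and then invokes Brezis's result (``How to recognize constant functions,'' Corollary~2) to conclude that the finiteness of $\int_E\int_{B_1\setminus E}|x-y|^{-n-1}$ forces $|E|=0$ or $|B_1\setminus E|=0$. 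That is the entire proof: truncation, one elementary kernel comparison, one citation.

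You instead set out to re-derive a version of that Brezis-type rigidity from scratch via one-dimensional slicing and 1D embeddings. Two points are worth flagging. First, the slicing inequality as you state it is not quite the right form: fixing a single direction $e$ and a single hyperplane $\Pi=e^\perp$, and merely restricting the $(x,y)$-integration to a cone around $\R e$, does not yield a decomposition of the Gagliardo seminorm into 1D slice seminorms along $e$ (for $h$ in the cone but $h\nparallel e$, the differences $u(x)-u(x+h)$ are not differences along the slice). What does hold cleanly is the polar decomposition, $[u]^p_{W^{s,p}(B_1)}=\tfrac12\int_{\Sph^{n-1}}\int_{\omega^\perp}[u(z+\cdot\,\omega)]^p_{W^{s,p}(I_z^\omega)}\,\mathrm{d}z\,\mathrm{d}\omega$, where the transverse hyperplane $\omega^\perp$ varies with $\omega$; with that identity your density-point argument (applied to a positive-measure set of pairs $(\omega,z)$ near $(e,z_0)$) does go through. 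Alternatively one could invoke the Triebel-type fixed-axis slicing lemma, but that is itself a nontrivial ingredient. Second, your borderline case $sp=1$ in one dimension is exactly the one-dimensional instance of the very Brezis rigidity statement the paper cites, so your proof does not actually dispense with that input; it only relocates it, after adding the slicing and density-point machinery. In short: the argument is salvageable and illustrates the mechanism behind the citation, but given the paper's aims the direct route---truncate, compare kernels, cite Brezis---is both shorter and more robust (it requires no slicing lemma and handles the critical exponent in all dimensions at once).
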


\begin{proof}
	Assume by contradiction that $|\{a < u < b\}\cap B_1| = 0$.
	Then, setting $E:= \{u \leq a\}\cap B_1$ we have that $B_1 \setminus E =  \{u \geq b\}\cap B_1$ up to a set of measure zero.
	Therefore, if we consider the function $u_{a,b} := \min \{ \max \{ u , a\}, b \}$, since $|u_{a,b}(x) - u_{a,b}(y)| \leq |u(x) - u(y)|$ for all $x,y\in B_1$, it follows that $u_{a,b}\in W^{\s,p} (B_1)$, and thus we have
	\begin{equation}
		 2 c_{n, \s,p} (b - a)^p\int_E \int_{B_1 \setminus E} \dfrac{\df x \, \df y}{|x-y|^{n + sp}} =  [u_{a,b} ]^p_{W^{\s,p}(B_1)} < +\infty .
	\end{equation}
	Then, since $sp \geq 1$ it holds that 
	\begin{equation}
		\int_E \int_{B_1 \setminus E} \dfrac{\df x \, \df y}{|x-y|^{n + 1}}  < +\infty 
	\end{equation}
	 and then by \cite[Corollary~2]{Brezis-ConstantFunctions} it follows that either $|E|= 0$ or $|B_1 \setminus E|=0$, a contradiction.
\end{proof}

\section{Some comments on the exterior condition in the notions of solution}
\label{Sec:ExteriorCondition}

In this appendix we discuss with more details which are the assumptions on the exterior condition $g$ needed to define each type of solution considered in this article.
Our assumptions concern its regularity near $\partial\Omega$ and its growth at infinity.
On the one hand, we assume that $g$ satisfies 
\begin{equation}
	\label{Eq:Holderg}
	|g(x) - g(z) | \leq C_0 |x-z|^\alpha  \quad  \text{ for all }  z\in \partial \Omega  \text{ and } x \in \R^n \setminus \Omega \text{ such that } d_\Omega (x) \leq C_1
\end{equation}
for some positive constants $C_0$ and $C_1$, and some exponent $\alpha\in (0,1)$.
On the other hand, we assume that
\begin{equation}
	\label{Eq:Growthg}
	|g(x)| \leq C_0 (1 + |x|^\gamma) \quad  \text{ for all } x \in \R^n \setminus \Omega
\end{equation}
for some $\gamma \geq 0$.
These pointwise assumptions can be also considered in an integral form, assuming $g$ belonging to an appropriate fractional Sobolev space near $\partial \Omega$, and $g$ in  the right tail space  (see \cite[Section 5.1]{Korvenpaa2016} for some boundary regularity results under these assumptions), but here for simplicity we consider \eqref{Eq:Holderg} and \eqref{Eq:Growthg}.

In the following discussion we will need two estimates for the nonlocal Neumann derivative of a function $\varphi$ which is $C^{\bar{\alpha}}$ across $\partial \Omega$ and has a growth of the form $|\varphi(x)| \leq C (1 + |x|^{\bar{\gamma}})$, for some $\bar{\alpha} \in (0,1)$ and some $\bar{\gamma} \geq 0$.
On the one hand, for $x \in \R^n\setminus \Omega$  with $d_\Omega(x) \leq C$ we have
\begin{equation}
	\label{Eq:EstimateNsNear}
	|\ncal_\s \varphi (x)| \leq C \int_\Omega \dfrac{|\varphi(x) - \varphi(y)|}{|x-y|^{n + 2\s}} \d y \leq C \int_\Omega \dfrac{\d y}{|x-y|^{n + 2\s - \bar{\alpha}}} \d y \leq C d_\Omega (x)^{\bar{\alpha} - 2\s}.
\end{equation}
On the other hand, for $x \in \R^n\setminus \Omega$  with $d_\Omega(x) \geq C$ we have
\begin{equation}
	\label{Eq:EstimateNsFar}
	|\ncal_\s \varphi (x)| \leq \dfrac{C}{1 + |x|^{n + 2\s - \bar{\gamma}}}
\end{equation}

$\bullet$ \textbf{$L^1$-weak solutions}

To properly define $L^1$-weak solutions, the integral
\begin{equation}
	\int_{\R^n \setminus \Omega} g \ncal_\s \varphi \d x
\end{equation}
must be finite for any test function $\varphi$.
Recall that any such $\varphi$ vanishes in $\R^n\setminus \Omega$ (and thus is bounded at infinity) and is $C^\s$ across $\partial \Omega$.
Therefore the above estimates \eqref{Eq:EstimateNsNear} and \eqref{Eq:EstimateNsFar} hold for $\bar{\alpha} = \s$ and $\bar{\gamma} = 0$ respectively.
As a consequence,  $|g \ncal_\s | \leq C|g| d_\Omega^{-\s}$ near $\partial \Omega$ and $|g \ncal_\s | \leq C (1 + |x|^{-n - 2\s})$ at infinity.
We conclude that $L^1$-weak solutions can be well defined for every exterior datum $g \in L^1_\loc (\R^n \setminus \Omega, d_\Omega^{-\s}\d x) \cap L^1_\s(\R^n\setminus \Omega)$.

$\bullet$ \textbf{Energy solutions}

In this context, an energy solution can be seen as an extension (minimizing an $H^\s_{\Omega}$ seminorm) of the function $g$ inside $\Omega$. 
To carry out this minimization process we need to ensure that the set of admissible functions is not empty. 
For this, we define $\overline{g}$ to be the harmonic extension of $g$ inside $\Omega$, which is the smoothest (in $\Omega$) possible extension of $g$, as mentioned in \cite[Lemma~2.2]{AudritoRosOton}.
Indeed, if $g$ satisfies \eqref{Eq:Holderg} for some $\alpha \in (0,1)$, then $\overline{g} \in C^\alpha (\overline{\Omega}) \cap C^\infty (\Omega)$.
We shall show next under which assumptions $\overline{g} \in H^\s_{\Omega}$.

Note first that
\begin{equation}
	\label{Eq:HsNormg}
	\seminorm{\overline{g}}^2_{H^\s_{\Omega}} = \langle \overline{g}, \overline{g} \rangle_{H^\s_\Omega} = \int_\Omega  \overline{g} \, \fraclaplacian \overline{g} \d x + \int_{\R^n \setminus \Omega} \overline{g} \, \ncal_\s \overline{g} \d x,
\end{equation}
and it suffices to check the finiteness of each integral  separately.
On the one hand, the following estimate was proved in \cite[Lemma~2.4]{AudritoRosOton}:
\begin{equation}
	\label{Eq:BoundFracLapg}
	|\fraclaplacian \overline{g} | \leq C d_\Omega^{\alpha - 2\s} \quad \text{ in } \Omega.
\end{equation}
Thus, the first integral in \eqref{Eq:HsNormg} is finite provided that $\alpha  > 2\s -1$.
On the other hand, using estimates \eqref{Eq:EstimateNsNear} and \eqref{Eq:EstimateNsFar} with $\varphi$ replaced by $\overline{g}$ and taking $\bar{\alpha} = \alpha $ and $\bar{\gamma} = \gamma$ we readily see that the second integral in \eqref{Eq:HsNormg} is finite provided that  $\alpha  > 2\s -1$ (to ensure integrability near $\partial \Omega$), and $2\gamma < 2\s$ (to guarantee integrability at infinity).

Summarizing, energy solutions are well defined if one assumes that $g$ satisfies \eqref{Eq:Holderg} and \eqref{Eq:Growthg} with $\alpha  > \max \{ 0, 2\s -1\}$ and $\gamma < \s$ respectively.
Note that since we exclude the set $(\R^n \setminus \Omega)^2$ in the double integral defining the seminorm of $H^\s_{\Omega}$, we do not require any local regularity of $g$ in $\R^n \setminus \Omega$ when we are at a positive distance of $\partial\Omega$.

\begin{remark}
	Condition $\alpha  > \max \{ 0, 2\s -1\}$ illustrates again (as \Cref{Prop:IVT}) the difference of the fractional Sobolev spaces $H^\s_{\Omega}$ depending on whether $\s < 1/2$ or $\s \geq 1/2$.
	Indeed, when $\s < 1/2$ the functions in $H^\s_{\Omega}$ can have jumps across $\partial\Omega$ but the singularity given by $|x-y|^{-n - 2\s}$ is not strong enough so that integrals of the form $\int_\Omega \int_{\R^n \setminus \Omega} |x-y|^{-n - 2\s} \d x \d y$ are infinite (assuming $\partial \Omega$ smooth enough).
	On the other hand, when $\s \geq 1/2$ we need an additional assumption, namely the Hölder regularity of the function $\overline{g}$ across $\partial \Omega$, in order for the difference $\overline{g}(x) - \overline{g}(y)$ to compensate the singularity of $|x-y|^{-n - 2\s}$.
\end{remark}

$\bullet$ \textbf{Pointwise solutions}

In order to have $\fraclaplacian u$ well defined pointwise in $\Omega$, apart from interior regularity we require $u\in L^1_\s(\R^n)$.
Since $u=g$ in $\R^n\setminus \Omega$, it is enough to assume that \eqref{Eq:Growthg} holds fo $\gamma < 2\s$.

\section{$L^1$-theory for the fractional Laplacian}
\label{Sec:L1Theory}

In this section we collect the results that are needed through the paper concerning $L^1$-weak solutions.
This setting (with minor modifications and usually for zero exterior condition) has been used in several papers, see \cite{AbdellaouiFernandezLeonoriYounes-CalderonZygmund,BiccariWarmaZuazua-Dirichlet,LeonoriPeralPrimoSoria,RosOtonSerra-Extremal} to name some of them.
For the sake of completeness, we will next provide elementary proofs of the results that we use in the paper, paying especial attention to the role of the nontrivial exterior condition.

We first show how to prove existence and uniqueness of $L^1$-weak solutions to the Dirichlet problem for the fractional Laplacian \eqref{Eq:DirichletPbLinear}.

\begin{proposition}[Existence and uniqueness of $L^1$-weak solutions]
	\label{Prop:ExistenceL1}
	Let $\Omega\subset \R^n$ be a smooth bounded domain, let $h \in L^1(\Omega, d_\Omega^\s \dx)$, and let $g$ satisfy \eqref{Eq:Assumptionsg} for some $\alpha_0 \in (\max\{0, 2\s - 1\},  \s)$ and some positive constant $C_0$.
	
	Then, there exists a unique $L^1$-weak solution $u$ to \eqref{Eq:DirichletPbLinear}	in the sense of \Cref{Def:L1WeakSol}.
	In addition, 
	\begin{equation}
		\label{Eq:L1WeakSolEstimate}
		\norm{u}_{L^1(\Omega)} \leq C \left (\norm{h}_{L^1(\Omega, d_\Omega^\s \dx)} + C_0 \right)
	\end{equation}
	for some constant $C$ depending only $n$, $\s$, $\alpha_0$, and $\Omega$.
\end{proposition}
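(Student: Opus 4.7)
My plan is to reduce the full Dirichlet problem to the zero-exterior case via the $\s$-harmonic extension of $g$, then construct the solution of the reduced problem by approximation with smooth data, using a duality argument to obtain the $L^1$ estimate.

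First, I would split $u = \overline{g} + v$, where $\overline{g}$ is the $\s$-harmonic extension of $g$ into $\Omega$, as considered in \Cref{Sec:ExteriorCondition}. Under assumption \eqref{Eq:Assumptionsg}, the regularity results of \cite{AudritoRosOton} give $\overline{g} \in C^{\alpha_0}(\overline{\Omega}) \cap C^\infty(\Omega)$ with $|\fraclaplacian \overline{g}| \leq C d_\Omega^{\alpha_0 - 2\s}$ in $\Omega$; in particular $\overline{g}$ is a classical (hence $L^1$-weak) solution of the homogeneous equation with exterior datum $g$, and $\norm{\overline{g}}_{L^1(\Omega)} \leq C \cdot C_0$ follows from the maximum principle together with the Hölder bound in \eqref{Eq:Assumptionsg}. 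It then suffices to construct an $L^1$-weak solution $v$ of $\fraclaplacian v = h$ in $\Omega$ with $v \equiv 0$ in $\R^n \setminus \Omega$, satisfying $\norm{v}_{L^1(\Omega)} \leq C \norm{h}_{L^1(\Omega, d_\Omega^\s \dx)}$.

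For the reduced problem, I would pick a sequence $h_k \in C^\infty_c(\Omega)$ with $h_k \to h$ in $L^1(\Omega, d_\Omega^\s \dx)$, and let $v_k$ be the (smooth, classical) energy solution of $\fraclaplacian v_k = h_k$ with zero exterior condition. The key a priori estimate is
\[
\norm{v_k}_{L^1(\Omega)} \leq C \norm{h_k}_{L^1(\Omega, d_\Omega^\s \dx)}.
\]
To prove it, for each $\psi \in L^\infty(\Omega)$ with $\norm{\psi}_{L^\infty(\Omega)} \leq 1$, I consider the energy solution $\varphi_\psi$ of $\fraclaplacian \varphi_\psi = \psi$ in $\Omega$ with zero exterior condition. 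By the sharp boundary regularity of \cite{RosOtonSerra-Regularity}, $\varphi_\psi$ and $\fraclaplacian \varphi_\psi = \psi$ are bounded in $\Omega$, and moreover $|\varphi_\psi(x)| \leq C d_\Omega^\s(x)$. Using $\varphi_\psi$ as an admissible test function in the weak formulation for $v_k$ (equivalently, integrating by parts, which is legitimate since $v_k$ is smooth and vanishes outside $\Omega$) yields
\[
\int_\Omega v_k \psi \dx = \int_\Omega h_k \varphi_\psi \dx \leq C \norm{h_k}_{L^1(\Omega, d_\Omega^\s \dx)}.
\]
Taking the supremum over such $\psi$ gives the claim.

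Applying this estimate to $v_k - v_j$ shows that $\{v_k\}$ is Cauchy in $L^1(\Omega)$, hence converges to some $v \in L^1(\Omega)$. Passing to the limit in the $L^1$-weak formulation for $v_k$ (justified by dominated convergence, since admissible test functions $\varphi$ satisfy $|\varphi| \leq C d_\Omega^\s$ so that $|h_k \varphi| \leq C d_\Omega^\s |h_k|$) shows that $v$ is the sought $L^1$-weak solution, and $u := \overline{g} + v$ completes the existence part with the stated estimate. For uniqueness, if $w$ is an $L^1$-weak solution with $h = 0$ and $g = 0$, the same duality gives $\int_\Omega w \psi \dx = 0$ for every $\psi \in L^\infty(\Omega)$, forcing $w \equiv 0$. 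The main obstacle is verifying that $\varphi_\psi$ is genuinely an admissible test function with the sharp boundary decay $|\varphi_\psi(x)| \leq C d_\Omega^\s(x)$, which is the linchpin of the duality argument; this rests on the boundary Hölder regularity for the Dirichlet problem with bounded right-hand side.
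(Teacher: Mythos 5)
Your proof is correct and takes essentially the same route as the paper's: the heart of both arguments is the duality with the adjoint Dirichlet problem $\fraclaplacian \varphi_\psi = \psi$, using the boundary estimate $|\varphi_\psi| \leq C d_\Omega^\s \norm{\psi}_{L^\infty(\Omega)}$ from Ros-Oton--Serra to produce the $L^1$ bound, combined with subtracting the $\s$-harmonic extension of $g$ (the paper in fact notes your order of operations, treating $u - \overline{g}$ directly, as an explicit alternative at the end of its proof). The two small variations you introduce are both sound and arguably a touch cleaner: you approximate $h$ by $C^\infty_c(\Omega)$ functions and deduce convergence from the Cauchy criterion in $L^1(\Omega)$ rather than truncating $h^\pm$ and using monotone convergence (thereby avoiding any appeal to the comparison principle at this stage), and you establish uniqueness inline by the same duality identity $\int_\Omega w\psi \d x = 0$ for all $\psi\in L^\infty(\Omega)$ rather than invoking the maximum principle of \Cref{Prop:MaxPple}, which the paper anyway proves by the same test-function construction.
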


\begin{proof}
	The uniqueness will follow from the maximum principle (see \eqref{Prop:MaxPple} below), and thus we concentrate on proving the existence.
	We will show first the result for $g\equiv 0$.
	In this case, note that by splitting $h$ into its positive and negative parts $h= h^+ - h^-$, it is enough to prove the existence assuming that $h\geq 0$ a.e. in $\Omega$.
	For such a function $h$, set $h_k := \min \{h, k\}$ and since this is a bounded function, there exists a classical solution $u_k$ to \eqref{Eq:DirichletPbLinear} with $h$ replaced by $h_k$ and with $g\equiv 0$.
	
	Now, take $\eta \in C^\infty_c (\Omega)$ and let $\varphi$ be the solution to 
	\begin{equation}
		\label{Eq:TestFunctionLinearPb}
		\beqc{\PDEsystem}
		\fraclaplacian \varphi & = & \eta& \text{ in } \Omega,\\
		\varphi & = & 0 & \text{ in } \R^n\setminus \Omega.
		\eeqc
	\end{equation}
	Multiplying the previous equation by $u_k$ and integrating in $\Omega$, after using the equation for $u_k$ we obtain
	\begin{equation}
		\label{Eq:L1ExistenceProofWeakSol}
		\int_\Omega u_k \eta \d x = \int_\Omega u_k \fraclaplacian \varphi \d x = \int_\Omega \fraclaplacian u_k \varphi \d x =  \int_\Omega h_k \varphi \d x.
	\end{equation}
	Here we have used that $u_k$ and $\varphi$ are classical solutions of linear equations with bounded right-hand sides, and hence regular enough to perform the usual integration by parts for the fractional Laplacian.
	Note that by standard boundary regularity for the fractional Laplacian (see \cite[Theorem~1.2]{RosOtonSerra-Regularity}), for $ x\in \Omega$ we have $ |\varphi(x) | \leq d_\Omega^\s(x) C \norm{\eta}_{L^\infty(\Omega)}$  for some constant $C$ depending on $\Omega$.
	As a consequence, we obtain the estimate
	\begin{equation}
		\left | \int_\Omega u_k \eta \d x \right |\leq \int_\Omega |h_k| |\varphi| \d x \leq C \norm{\eta}_{L^\infty(\Omega)}  \int_\Omega |h_k| d_\Omega^\s \d x  \leq C \norm{\eta}_{L^\infty(\Omega)}  \norm{h}_{L^1(\Omega, d_\Omega^\s \dx)},
	\end{equation}
	and taking the supremum over all $\eta$ with $\norm{\eta}_{L^\infty(\Omega)}=1$ we get 
	\begin{equation}
		\norm{u_k}_{L^1(\Omega)} \leq C \norm{h}_{L^1(\Omega, d_\Omega^\s \dx)}.
	\end{equation}
	By monotone convergence (note that the sequence $u_k$ is nondecreasing by the maximum principle), as $k\to +\infty$ the sequence $u_k$ converges in $L^1(\Omega)$ to some function $u\in L^1(\Omega)$ satisfying \eqref{Eq:L1WeakSolEstimate} with $C_0= 0$.
	Furthermore, from \eqref{Eq:L1ExistenceProofWeakSol} and by dominated convergence, it follows that $u$ is an $L^1$-weak solution to \eqref{Eq:DirichletPbLinear} with $g\equiv 0$ ---note that we can take $\eta \in L^\infty(\Omega)$ and $\varphi$ the solution to \eqref{Eq:TestFunctionLinearPb}.
	
	Finally, to consider a nonzero exterior condition $g$, by the linearity of the problem it suffices to add, to the previously built function $u$, the solution to
	\begin{equation}
		\beqc{\PDEsystem}
		\fraclaplacian w & = & 0& \text{ in } \Omega,\\
		w & = & g & \text{ in } \R^n\setminus \Omega.
		\eeqc
	\end{equation}
	This solution can be obtained by standard methods (see \cite{DiCastroKuusiPalatucci-Minimizers} for more details) minimizing the $H^\s_\Omega$ seminorm in the convex set $\mathcal{K}_g := \{ v \in H^\s_\Omega \ : \ v - \overline{g} \in H^\s_{\Omega,0}\}$,  where $\overline{g}$ is a smooth extension of $g$ inside $\Omega$ (here we need to assume $\alpha_0> 2\s - 1$ to ensure that $\overline{g} \in H^\s_{\Omega}$, see \Cref{Sec:ExteriorCondition}).
	By the maximum principle (see \cite[Lemma~2.1]{AudritoRosOton}) it follows that
	\begin{equation}
		\norm{w}_{ L^\infty (\Omega)}\leq C C_0
	\end{equation} 
	for some constant $C$ depending only on $n$, $\s$, $\alpha_0$, and $\Omega$.
	From this, estimate \eqref{Eq:L1WeakSolEstimate} follows.
	As an alternative method, one can consider directly the problem for $u-\overline{g}$, whose right-hand side is now $h - \fraclaplacian \overline{g}$ and belongs to $L^1(\Omega)$ if $\alpha > 2 \s - 1$ thanks to estimate \eqref{Eq:BoundFracLapg}, and use the argument for zero exterior data.
\end{proof}

We next present the maximum principle in the $L^1$-weak setting.

\begin{proposition}[Maximum principle]
	\label{Prop:MaxPple}
	Let $\Omega\subset \R^n$ be a smooth bounded domain and let $c\in L^\infty(\Omega)$ be a nonnegative function in $\Omega$.
	Let $u\in L^1_\s(\R^n) \cap L^1_\loc(\R^n\setminus \Omega, d^{-\s}_\Omega \d x)$ be a function satisfying 
	\begin{equation}
		\beqc{\PDEsystem}
		\fraclaplacian u + c u& \geq & 0& \text{ in } \Omega,\\
		u & \geq & 0 & \text{ in } \R^n\setminus \Omega,
		\eeqc
	\end{equation}
	in the $L^1$-weak sense, i.e., such that $u  \geq 0$ a.e. in $\R^n\setminus \Omega$ and
	\begin{equation}
		\int_{\Omega} u \fraclaplacian \varphi \d x +  \int_{\R^n \setminus \Omega} u \ncal_\s \varphi \d x + \int_{\Omega} c u \varphi \d x \geq 0
	\end{equation}
	for every nonnegative function $\varphi$ such that $\varphi$ and $\fraclaplacian \varphi$ are bounded in $\Omega$ and such that $\varphi = 0$ in $\R^n\setminus \Omega$.
	
	Then $u\geq 0$ a.e. in $\Omega$.
\end{proposition}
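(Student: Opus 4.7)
The plan is to test the weak inequality satisfied by $u$ against the solution of a dual Dirichlet problem, in the same spirit as the argument used for the existence result in \Cref{Prop:ExistenceL1}. The dual problem is designed so as to absorb the zeroth-order term $cu$: for each nonnegative $\eta \in L^\infty(\Omega)$, I would first construct a nonnegative function $\varphi$ solving $\fraclaplacian \varphi + c\varphi = \eta$ in $\Omega$ with $\varphi \equiv 0$ in $\R^n \setminus \Omega$, and enjoying $\varphi, \fraclaplacian \varphi \in L^\infty(\Omega)$. Existence follows from standard variational methods (since $c\geq 0$, the associated bilinear form is coercive on $H^\s_{\Omega,0}$ via the fractional Poincaré inequality), while the bounds on $\varphi$ and on $\fraclaplacian \varphi = \eta - c\varphi$ together with $\varphi \geq 0$ in $\R^n$ follow from the classical comparison principle applied to the bounded solution $\varphi$.

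Such $\varphi$ is an admissible test function in the $L^1$-weak inequality from the hypothesis. Inserting it and using $\fraclaplacian \varphi = \eta - c\varphi$ to cancel the contributions involving $c$, one is left with
\begin{equation}
\int_\Omega u\, \eta \dx + \int_{\R^n \setminus \Omega} u\, \ncal_\s \varphi \,\d x \geq 0.
\end{equation}
The sign of the exterior integral is controlled by the following elementary observation: since $\varphi \geq 0$ in $\Omega$ and $\varphi \equiv 0$ in $\R^n \setminus \Omega$, the very definition of $\ncal_\s$ yields $\ncal_\s \varphi (x) = -c_{n,\s}\int_\Omega \varphi(y) |x-y|^{-n-2\s}\,\d y \leq 0$ for every $x \in \R^n \setminus \Omega$. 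Combined with the standing hypothesis $u \geq 0$ a.e.\ in $\R^n\setminus \Omega$, this forces the exterior integral to be nonpositive, and hence $\int_\Omega u\,\eta \dx \geq 0$. Choosing $\eta$ to be the characteristic function of the measurable set $\{u < 0\} \cap \Omega$ then forces $|\{u<0\}\cap \Omega| = 0$, which is the desired conclusion.

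The only non-routine step I anticipate is verifying the absolute convergence of the exterior integral $\int_{\R^n\setminus\Omega} u\,\ncal_\s\varphi\, \d x$ under the integrability hypotheses placed on $u$. This relies on the fact that the test function $\varphi$ constructed above belongs to $C^\s(\overline{\Omega})$ and vanishes identically outside $\Omega$, which yields pointwise bounds $|\ncal_\s \varphi (x)| \leq C d_\Omega^{-\s}(x)$ near $\partial\Omega$ and $|\ncal_\s\varphi(x)| \leq C(1+|x|)^{-n-2\s}$ far away, exactly as in \eqref{Eq:EstimateNsNear}--\eqref{Eq:EstimateNsFar}. These bounds are tailored precisely to the two assumptions $u \in L^1_\loc(\R^n \setminus \Omega, d_\Omega^{-\s}\,\d x)$ and $u \in L^1_\s(\R^n)$ appearing in the proposition, and once this compatibility is verified the sign manipulations above close the argument.
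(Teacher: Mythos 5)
Your proof is correct and follows essentially the same strategy as the paper: construct the solution $\varphi$ to the dual problem $\fraclaplacian\varphi + c\varphi = \eta$ with zero exterior condition, observe $\varphi\geq 0$ and $\ncal_\s\varphi\leq 0$ in $\R^n\setminus\Omega$, and use these signs together with $u\geq 0$ outside $\Omega$ to conclude $\int_\Omega u\,\eta\dx\geq 0$. The only cosmetic difference is that the paper takes $\eta\in C^\infty_c(\Omega)$ nonnegative and concludes by arbitrariness of $\eta$, while you take $\eta=\chi_{\{u<0\}\cap\Omega}$; both choices are admissible (any $\eta\in L^\infty(\Omega)$ works, since then $\varphi$ and $\fraclaplacian\varphi=\eta-c\varphi$ are bounded) and lead to the same conclusion.
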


\begin{proof}
	Take $\eta \in C^\infty_c (\Omega)$ and let $\varphi$ be the solution to 
	\begin{equation}
		\beqc{\PDEsystem}
		\fraclaplacian \varphi + c \varphi & = & \eta& \text{ in } \Omega,\\
		\varphi & = & 0 & \text{ in } \R^n\setminus \Omega.
		\eeqc
	\end{equation}
	Assume that $\eta \geq 0$.
	Then, since $c\geq 0$ in $\Omega$, the standard maximum principle yields $\varphi \geq 0$.
	Plugging this function into the definition of $L^1$-weak solution we obtain
	\begin{equation}
		\int_{\Omega} u \eta \d x + \int_{\R^n \setminus \Omega} u \ncal_\s \varphi \d x \geq 0.
	\end{equation}
	Moreover, since $\varphi = 0$ in $\R^n\setminus \Omega$, then 
	\begin{equation}
		\ncal_\s \varphi (x) = c_{n,\s} \int_\Omega \dfrac{\varphi(x) - \varphi(y)}{|x-y|^{n + 2\s}} \d y =  - c_{n,\s} \int_\Omega \dfrac{\varphi(y)}{|x-y|^{n + 2\s}} \d y  < 0 \quad \text{ for } x \in \R^n\setminus \Omega.
	\end{equation}
	Hence, using that $u\geq 0$ a.e. in $\R^n \setminus \Omega$, we get that
	\begin{equation}
		\int_{\Omega} u \eta \d x \geq 0 \quad  \text{ for every } \eta \in C^\infty_c(\Omega) \text{ such that } \eta \geq 0 \text{ in } \Omega.
	\end{equation}
	Thus, it follows that $u\geq 0$ a.e. in $\Omega$.
\end{proof}

Following a similar argument we can establish a unique continuation principle in the $L^1$-weak setting.

\begin{proposition}[Unique continuation]
	\label{Prop:UniqueContinuation}
	Let $\Omega\subset \R^n$ be a smooth bounded domain, and let $u\in L^1_\s(\R^n) \cap L^1_\loc(\R^n\setminus \Omega, d^{-\s}_\Omega \d x)$ be a function such that $u\geq 0$ a.e. in $\R^n$ and satisfying
	\begin{equation}
		\label{Eq:SuperSHarmonic}
		\fraclaplacian u + c u \geq 0 \quad  \text{ in } \Omega
	\end{equation} 
	in the $L^1$-weak sense and for some $c\in L^\infty(\Omega)$.
	
	If there exists a ball $B\subset \Omega$ such that $u\equiv 0$ a.e. in $B$, then $u\equiv 0$ a.e. in $\R^n$.
\end{proposition}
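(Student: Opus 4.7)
The plan is to exploit the nonlocality of $(-\Delta)^s$ directly by testing the weak inequality against nonnegative test functions supported inside the ball $B$ where $u$ vanishes, rather than constructing auxiliary solutions as in the proof of \Cref{Prop:MaxPple}. The heart of the matter is that for a test function supported in $B$, the values of $(-\Delta)^s \varphi$ and $\ncal_\s \varphi$ outside $B$ are \emph{explicit} and have a definite sign, so the weak inequality reduces to an integral identity whose integrand has a definite sign.

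More concretely, fix any nonnegative $\varphi \in C^\infty_c(B)$. Since $\supp \varphi \subset B \subset \Omega$, both $\varphi$ and $\fraclaplacian \varphi$ are bounded on $\Omega$ and $\varphi$ vanishes outside $\Omega$, so $\varphi$ is admissible in the $L^1$-weak formulation (integrability of the resulting integrals follows from $u \in L^1_\s(\R^n)$ for the decay of $\ncal_\s \varphi \sim |x|^{-(n+2\s)}$ at infinity, together with $u \in L^1_\loc(\R^n\setminus\Omega, d_\Omega^{-\s}\df x)$ near $\partial\Omega$, where $\ncal_\s \varphi$ is in fact bounded since $B \subset\subset \Omega$). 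The term $\int_\Omega c u \varphi \, dx$ vanishes because $u \equiv 0$ on $B$ and $\varphi \equiv 0$ off $B$. Splitting $\int_\Omega u \fraclaplacian \varphi \, dx$ across $B$ and $\Omega \setminus B$, the contribution from $B$ vanishes since $u \equiv 0$ there, while on $\Omega \setminus B$ we have, since $\varphi \equiv 0$ outside $B$,
\begin{equation*}
\fraclaplacian \varphi(x) = -c_{n,\s}\int_B \frac{\varphi(y)}{|x-y|^{n+2\s}}\, dy \leq 0,
\end{equation*}
and the same formula holds for $\ncal_\s \varphi(x)$ when $x \in \R^n \setminus \Omega$. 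The weak inequality therefore collapses to
\begin{equation*}
-c_{n,\s} \int_{\R^n \setminus B} u(x) \int_B \frac{\varphi(y)}{|x-y|^{n+2\s}}\, dy\, dx \geq 0.
\end{equation*}

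Since $u \geq 0$ a.e. on $\R^n$ and the kernel is strictly positive, the left-hand side is nonpositive, hence equals zero. Choosing $\varphi$ nonnegative and not identically zero, the inner integral $\int_B \varphi(y)|x-y|^{-(n+2\s)} dy$ is strictly positive for \emph{every} $x \in \R^n \setminus B$, so $u(x) = 0$ for a.e.\ $x \in \R^n \setminus B$. Combined with the assumption $u \equiv 0$ a.e.\ on $B$, we conclude $u \equiv 0$ a.e.\ on $\R^n$. Notice that the sign of $c$ played no role, because $\varphi$ and $u$ have disjoint supports. The only nontrivial point I expect is checking the integrability needed to justify using $\varphi \in C^\infty_c(B)$ as a test function and Fubini on the final double integral, but this is routine from the stated hypotheses on $u$ and the compact support of $\varphi$ strictly inside $\Omega$.
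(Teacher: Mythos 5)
Your proof is correct and follows essentially the same route as the paper: test with a nonnegative $\varphi$ supported in $B$, exploit that both $\fraclaplacian\varphi$ and $\ncal_\s\varphi$ reduce to the strictly negative potential $-c_{n,\s}\int_B \varphi(y)|x-y|^{-n-2\s}\,dy$ for $x\notin B$, and use $u\ge 0$ to force the resulting integral to vanish. The only (cosmetic) difference is that the paper extracts the conclusions on $\R^n\setminus\Omega$ and on $\Omega\setminus B$ in two separate inequality chains, whereas you merge them into a single double integral over $\R^n\setminus B$; this is a slight streamlining but the same idea.
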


\begin{proof}
	From the $L^1$-weak formulation of \eqref{Eq:SuperSHarmonic} we have that
	\begin{equation}
		\int_{\Omega} u \fraclaplacian \varphi \d x + \int_{\R^n \setminus \Omega} u \ncal_\s \varphi \d x + \int_{\Omega} c u \varphi \d x  \geq 0
	\end{equation}
	for every nonnegative $\varphi$ such that $\varphi$ and $\fraclaplacian\varphi$ are bounded in $\Omega$, and such that $\varphi = 0$ in $\R^n\setminus \Omega$.	
	Take $\varphi$ being a nonnegative function with compact support in $B$.
	Then, it follows that $\fraclaplacian \varphi < 0$ in $\R^n \setminus B$ and therefore since $u\equiv 0$ a.e. in $B$ we have
	\begin{equation}
		\int_{\Omega} u \fraclaplacian \varphi \d x  = 	\int_{\Omega\setminus B} u \fraclaplacian \varphi \d x \leq 0.
	\end{equation}
	Moreover, since $\varphi$ vanishes outside $B$ we have that  $\ncal_\s \varphi < 0$ in $\R^n \setminus \Omega$ and that
	\begin{equation}
		\int_{\Omega} c u \varphi \d x = 0.
	\end{equation}
	Hence, from the weak formulation we have
	\begin{equation}
		0 \geq \int_{\R^n \setminus \Omega} u \ncal_\s \varphi \d x \geq \int_{\Omega} u \fraclaplacian \varphi \d x + \int_{\R^n \setminus \Omega} u \ncal_\s \varphi \d x + \int_{\Omega} c u \varphi \d x  \geq 0
	\end{equation}
	and thus $u\equiv 0$ a.e. in $\R^n\setminus \Omega$. 
	The same argument yields
	\begin{equation}
		0 \geq \int_{\Omega\setminus B} u \fraclaplacian \varphi \d x = \int_{\Omega} u \fraclaplacian \varphi \d x + \int_{\R^n \setminus \Omega} u \ncal_\s \varphi \d x + \int_{\Omega} c u \varphi \d x  \geq 0
	\end{equation}
	and since $u\geq 0$ a.e. in $\R^n$ and $\fraclaplacian \varphi < 0$ in $\R^n \setminus B$, it follows that  $u\equiv 0$ a.e. in $\Omega\setminus B$, concluding the proof. 
\end{proof}

Now we recall a strong maximum principle for $L^1$-weak supersolutions in $H^\s_\Omega$.
It is used several times along the paper.

\begin{proposition}[Strong maximum principle]
	\label{Prop:StrongMaxPple}
	
	Let $\Omega\subset \R^n$ be a smooth bounded domain, and let $u\in H^\s_{\Omega}$ be a function such that $u\geq 0$ a.e. in $\R^n$ and satisfying
	\begin{equation}
		\label{Eq:SuperSHarmonicS}
		\fraclaplacian u + c u \geq 0 \quad  \text{ in } \Omega
	\end{equation} 
	in the $L^1$-weak sense and for some $c\in L^\infty(\Omega)$.
	
	If there exists a ball $B\subset \subset \Omega$ such that $\inf_B u = 0$, then $u\equiv 0$ a.e. in $\R^n$.
	
\end{proposition}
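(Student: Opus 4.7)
The plan is to combine a pointwise version of the strong maximum principle for the fractional Laplacian with the unique continuation principle already established (\Cref{Prop:UniqueContinuation}), via an auxiliary linear Dirichlet problem on a sufficiently small sub-ball.

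First I would invoke interior regularity for nonnegative $L^1$-weak supersolutions of $\fraclaplacian u+cu\ge 0$ with $c\in L^\infty(\Omega)$: since $u\in H^\s_\Omega$, fractional De Giorgi--Nash--Moser theory (in the vein of Silvestre and Kassmann) gives that $u$ admits a H\"older-continuous representative in $\Omega$. The condition $\inf_B u=0$ on the compact set $\overline{B}\subset\Omega$ then produces a point $x_0\in\overline{B}$ with $u(x_0)=0$. This continuity is where the hypothesis $u\in H^\s_\Omega$ is indispensable: it fails for merely $L^1$-weak supersolutions, as the counterexamples of \Cref{Sec:Counterexamples} show.

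Next, fix a small ball $B'\subset\subset\Omega$ containing $x_0$, of radius small enough that $\lambda_1(B')>\|c\|_{L^\infty(\Omega)}$ (possible since $\lambda_1(B_r)\sim r^{-2\s}\to+\infty$ as $r\to 0$). This smallness delivers both coercivity of $\fraclaplacian+c$ on $H^\s_{B',0}$ --- so that the auxiliary problem
\begin{equation*}
\beqc{\PDEsystem}
\fraclaplacian w+cw &=& 0 &\text{ in } B',\\
w &=& u &\text{ in } \R^n\setminus B',
\eeqc
\end{equation*}
has a unique solution $w\in H^\s_{B'}$ --- and a weak maximum principle on $B'$ for any $L^1$-weak supersolution of $\fraclaplacian v+cv\ge 0$ that vanishes outside $B'$ (proved by testing against $v^-$ and using Poincar\'e to absorb the $c$-term). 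Applying this principle to $w$ itself, whose exterior trace $u$ is $\ge 0$, yields $w\ge 0$ in $B'$; applying it to $u-w$, which vanishes outside $B'$, gives $u\ge w$ in $B'$. Since the right-hand side of the equation for $w$ is zero, standard interior regularity for the fractional Laplacian makes $w$ classical in $B'$.

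At the point $x_0$ the sandwich $0\le w(x_0)\le u(x_0)=0$ forces $w(x_0)=0$, and the pointwise equation there reads
\begin{equation*}
0=\fraclaplacian w(x_0)+c(x_0)w(x_0)=-c_{n,\s}\int_{\R^n}\frac{w(y)}{|x_0-y|^{n+2\s}}\d y;
\end{equation*}
since $w\ge 0$ in $\R^n$, this forces $w\equiv 0$ a.e.\ in $\R^n$. Hence $u\equiv 0$ a.e.\ in $\R^n\setminus B'$, and in particular on any ball $B''\subset \Omega\setminus\overline{B'}$ (which exists because $B'\subset\subset\Omega$). \Cref{Prop:UniqueContinuation} then delivers $u\equiv 0$ a.e.\ in $\R^n$. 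The main obstacle is the H\"older continuity of $u$ in the first step; everything downstream is routine maximum-principle bookkeeping together with the nonlocal \emph{strong minimum identity} available at any continuous zero.
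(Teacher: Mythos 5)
Your strategy is genuinely different from the paper's: the paper proves this in one step by invoking the weak Harnack inequality for nonnegative supersolutions in the De Giorgi class framework of Cozzi (once one observes that, since $u\geq 0$ in all of $\R^n$, the tail term in that inequality drops out), which directly gives $u\equiv 0$ in $B$ and then finishes with \Cref{Prop:UniqueContinuation}. You instead try to reduce to a \emph{pointwise} strong maximum principle via the auxiliary Dirichlet problem on a small sub-ball and a sandwich $0\leq w\leq u$. The idea is sound, but as written it has two genuine gaps, both related to regularity.

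First, the claim that the De Giorgi--Nash--Moser theory gives a \emph{H\"older-continuous} representative of $u$ is too strong: $u$ is only a supersolution, and supersolutions of $\fraclaplacian u+cu\geq 0$ with $c\in L^\infty$ are in general only lower semicontinuous, not H\"older continuous. (Lower semicontinuity would actually suffice to produce the minimum point $x_0\in \overline B$, so this gap is repairable, but you should not invoke a false statement.)

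Second, and more seriously, the step ``Since the right-hand side of the equation for $w$ is zero, standard interior regularity for the fractional Laplacian makes $w$ classical in $B'$'' is incorrect. The equation is $\fraclaplacian w=-cw$, so the right-hand side for $\fraclaplacian w$ is $-cw$, not $0$, and $c$ is only $L^\infty$. Even if $w$ were bounded (which you have not justified, since $u\in H^\s_\Omega$ need not be bounded), interior estimates with $L^\infty$ right-hand side give at best $w\in C^{2\s-\varepsilon}_{\rm loc}$ for $\s\neq 1/2$, which is \emph{not} enough to evaluate $\fraclaplacian w$ pointwise (that requires regularity strictly better than $C^{2\s}$). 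Consequently the identity
\begin{equation*}
\fraclaplacian w(x_0)=-c_{n,\s}\int_{\R^n}\frac{w(y)}{|x_0-y|^{n+2\s}}\,\d y
\end{equation*}
is not justified, and the concluding absurdity does not follow. One could try to repair this by working with viscosity supersolutions and touching $w$ from below by the zero function at $x_0$, or by mollifying, but neither step is in your write-up. The paper's route via weak Harnack is precisely designed to bypass this difficulty: it requires no pointwise regularity of the supersolution and tolerates merely measurable coefficients. You should either switch to that approach or supply the viscosity-theoretic argument that replaces the unjustified pointwise evaluation.
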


\begin{proof}
	First, proceeding as in \Cref{Remark:L1WeakSolWhichAreHs} it follows that $u$ is a weak supersolution to $\fraclaplacian u = f(u)$  in $\Omega$ in the sense of \cite[Definition~8.4]{Cozzi-DeGiorgiClassesLong}, and for $f$ such that $|f(u)| \leq d_1 + d_2 |u|^{q-1}$, with $q=2$, $d_1 = 0$, and $d_2 = \norm{c}_{L^\infty(\Omega)}$.
	Taking this into account, a detailed inspection\footnote{More precisely, following the notation of \cite[Proposition~8.5]{Cozzi-DeGiorgiClassesLong} one sees that if $p=2$ (which is the situation of the present paper), then one can take $k_0 = -\infty$.} 
	of the proof of Proposition~8.5 of \cite{Cozzi-DeGiorgiClassesLong} shows that $u$ fulfills the hypothesis of \cite[Proposition~6.8]{Cozzi-DeGiorgiClassesLong} with $d = d_1 = 0$, and therefore the following weak Harnack inequality holds:
	\begin{equation}
		\left( \average_B |u|^q \d x \right)^{1/q} \leq C \inf_B u 
	\end{equation}
	for some $q \in (0,1)$ and some constant $C$ depending on $n$, $\s$, $B$, and $\norm{c}_{L^\infty(\Omega)}$.
	Note that in nonlocal Harnack inequalities a tail term must appear in the right-hand side of the inequality if one considers the hypothesis of $u$ being nonnegative only on a ball $B$  (see \cite{DiCastroKuusiPalatucci-Harnack}). 
	This tail term is essentially the integral in $\R^n\setminus B$ of $u_-$ times a power growth function, and thus it does not appear here because $u\geq 0$ a.e. in $\R^n$.
	Since $\inf_B u = 0$ we get that $u\equiv 0$ a.e. in $B$, and the result follows from \Cref{Prop:UniqueContinuation}.
\end{proof}

We now show how to easily extend the $L^p$ estimates of \cite{RosOtonSerra-Extremal} to $L^1$-weak solutions and with nonzero exterior conditions.
The result that we use repeatedly through the paper (specially in some bootstrap arguments) is the following.

\begin{proposition}[$L^p$ regularity for $L^1$-weak solutions]
	\label{Prop:RegularityLp}
	Let $\s\in (0,1)$ and $n> 2\s$.
	Let $\Omega\subset \R^n$ be a smooth bounded domain, let $h \in L^1(\Omega, d_\Omega^\s \dx)$, and let $g$ satisfy \eqref{Eq:Assumptionsg} for some $\alpha_0 \in (\max\{0, 2\s - 1\},  \s)$ and some positive constant $C_0$.
	Assume that $u$ is the unique $L^1$-weak solution $u$ to \eqref{Eq:DirichletPbLinear} given by \Cref{Prop:ExistenceL1}.
	
	\begin{enumerate}[label=(\roman*)]
		\item If $h \in L^1(\Omega)$, then $u\in L^q(\Omega)$ for every $1\leq q < \frac{n}{n -2\s}$ and
		\begin{equation}
			\label{Eq:L1ToLp}
			\norm{u}_{L^q(\Omega)} \leq C \left(  \norm{h}_{L^1(\Omega)} + C_0 \right), \quad \text{ for } 1 \leq q <\frac{n}{n -2\s},
		\end{equation}
		for some constant $C$ depending only on $n$, $\s$, $q$, $\alpha_0$, and $\Omega$.
		
		\item If $h \in L^p(\Omega)$ for $p \in (1, \frac{n}{2\s})$, then $u\in L^{\frac{np}{n - 2\s p}}(\Omega)$ and
		\begin{equation}
			\label{Eq:LpToLq}
			\norm{u}_{L^{\frac{np}{n - 2\s p}}(\Omega)} \leq C \left ( \norm{h}_{L^p(\Omega)}  + C_0 \right), 
		\end{equation}
		for some constant $C$ depending only on $n$, $\s$,  $p$, $\alpha_0$, and $\Omega$.
		
		\item
		If $h \in L^p(\Omega)$ for $p \in (\frac{n}{2\s}, +\infty)$, then $u\in C^\beta(\overline{\Omega})$ and
		\begin{equation}
			\label{Eq:LpToLinfty}
			\norm{u}_{C^\beta(\overline{\Omega})} \leq C \left( \norm{h}_{L^p(\Omega)} + C_0 \right), \quad \text{ with } \beta = \min\left \{\alpha_0, 2\s - \frac{n}{p}\right \},
		\end{equation}
		for some constant $C$ depending only on $n$, $\s$, $p$, $\alpha_0$, and $\Omega$.
		
	\end{enumerate}
\end{proposition}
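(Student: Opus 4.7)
The plan is to reduce to the case of zero exterior data and then combine the known $L^p$/Hölder regularity for classical (energy) solutions of \cite{RosOtonSerra-Extremal, RosOtonSerra-Regularity} with a truncation--limit argument; for the $L^1$ case I would additionally use a duality argument. First I would write $u = v + w$, where $w$ is the bounded $\s$-harmonic extension of $g$ constructed in the proof of \Cref{Prop:ExistenceL1} and $v$ is the $L^1$-weak solution with zero exterior data and right-hand side $h$. The maximum principle together with \eqref{Eq:Assumptionsg} gives $\norm{w}_{L^\infty(\Omega)} \leq C\, C_0$, which controls the $C_0$ contribution to each of the three estimates; in case (iii) one also uses that $w \in C^{\alpha_0}(\overline{\Omega})$, which is exactly why $\beta$ is capped at $\alpha_0$. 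Thus it suffices to establish each bound for $v$ in terms of $\norm{h}_{L^p(\Omega)}$.

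For parts (ii) and (iii), I would use truncation. Set $h_k := \max\{-k,\min\{h,k\}\} \in L^\infty(\Omega)$, so that $\norm{h_k}_{L^p(\Omega)} \leq \norm{h}_{L^p(\Omega)}$ and $h_k \to h$ in $L^p(\Omega)$, and let $v_k$ be the classical (hence energy and $L^1$-weak) solution with right-hand side $h_k$ and zero exterior data. For these regular $v_k$, the $L^p \to L^{np/(n-2\s p)}$ estimate for $p \in (1, n/(2\s))$ and the $L^p \to C^{2\s - n/p}(\overline{\Omega})$ estimate for $p > n/(2\s)$ from \cite{RosOtonSerra-Extremal, RosOtonSerra-Regularity} apply directly and give uniform bounds on $v_k$. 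Estimate \eqref{Eq:L1WeakSolEstimate} applied to the differences $v_k - v_j$ (with zero exterior data and right-hand side $h_k - h_j \to 0$ in $L^1(\Omega, d_\Omega^\s \dx)$) shows that $\{v_k\}$ is Cauchy in $L^1(\Omega)$ and hence converges to $v$. The uniform bounds then pass to the limit by weak-$*$ compactness (case (ii)) or Arzelà--Ascoli (case (iii)).

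For part (i), I would use a duality argument analogous to the one in the proof of \Cref{Prop:ExistenceL1}. Fix $q \in (1, n/(n-2\s))$ and let $q' = q/(q-1)$; the condition $q < n/(n-2\s)$ translates exactly into $q' > n/(2\s)$. Given $\eta \in L^\infty(\Omega)$, let $\varphi$ solve $\fraclaplacian \varphi = \eta$ in $\Omega$ with $\varphi = 0$ in $\R^n \setminus \Omega$. Applying part (iii), which has already been proved with exponent $q'$ and with $C_0 = 0$ (zero exterior data), gives $\norm{\varphi}_{L^\infty(\Omega)} \leq C \norm{\eta}_{L^{q'}(\Omega)}$; moreover $\fraclaplacian \varphi = \eta$ is bounded, so $\varphi$ is admissible in the $L^1$-weak formulation of the equation for $v$. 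Plugging it in and using Hölder,
\begin{equation}
\left| \int_\Omega v\, \eta \dx \right| = \left| \int_\Omega h\, \varphi \dx \right| \leq \norm{h}_{L^1(\Omega)} \norm{\varphi}_{L^\infty(\Omega)} \leq C\, \norm{h}_{L^1(\Omega)} \norm{\eta}_{L^{q'}(\Omega)}.
\end{equation}
Taking the supremum over bounded $\eta$ with $\norm{\eta}_{L^{q'}(\Omega)} \leq 1$ (which are dense in the unit ball of $L^{q'}(\Omega)$) yields $\norm{v}_{L^q(\Omega)} \leq C \norm{h}_{L^1(\Omega)}$; the case $q = 1$ is already contained in \eqref{Eq:L1WeakSolEstimate}. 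Adding the bound $\norm{w}_{L^\infty(\Omega)} \leq C C_0$ gives \eqref{Eq:L1ToLp}.

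The main subtlety I expect is the bookkeeping of admissible test functions: in both the truncation step of (ii)--(iii) and the duality step of (i) one must check that the chosen $\varphi$ (and, in the approximation, the differences $v_k - v_j$ used via \eqref{Eq:L1WeakSolEstimate}) fit the test-function class of \Cref{Def:L1WeakSol}, namely $\varphi$ and $\fraclaplacian \varphi$ bounded in $\Omega$ and $\varphi \equiv 0$ outside. Beyond this, the argument is linear and the dependences of the constants on $n,\s,p,\alpha_0,\Omega$ follow transparently from those in \cite{RosOtonSerra-Extremal, RosOtonSerra-Regularity} together with the $L^\infty$ and $C^{\alpha_0}$ bounds on $w$.
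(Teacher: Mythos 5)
Your argument is correct, but it follows a genuinely different route from the paper. The paper proves (i)--(ii) by comparing $u$ directly with the Riesz potential $v:=(-\Delta)^{-\s}(h\chi_\Omega)$: after splitting $h=h^+-h^-$ it shows $\fraclaplacian v = h$ in $\Omega$, $v\ge 0$ outside, so the maximum principle gives $0\le u\le v$ and the classical Riesz-potential embeddings (as in \cite{RosOtonSerra-Extremal}) yield the bounds with no approximation and no duality; the nonzero datum is then handled exactly as you do, by adding the bounded $\s$-harmonic extension of $g$. For (iii) the paper instead writes $u = (-\Delta)^{-\s}(h\chi_\Omega) + w$ with $w$ $\s$-harmonic with exterior datum $g-v$, which forces a discussion of viscosity versus energy solutions when $2\s-\frac{n}{p}$ may fall below $2\s-1$ (i.e.\ $p\le n$); your decomposition keeps the harmonic part's datum equal to $g$ (always admissible since $\alpha_0>2\s-1$) and so sidesteps that subtlety, at the price of having to transfer the zero-data $L^p$ and Hölder estimates of \cite{RosOtonSerra-Extremal, RosOtonSerra-Regularity} from classical solutions to $L^1$-weak ones via truncation, the Cauchy argument through \eqref{Eq:L1WeakSolEstimate}, and weak compactness or Arzelà--Ascoli. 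Your duality proof of (i) (testing with the solution of $\fraclaplacian\varphi=\eta$, $\eta\in L^\infty$, and using the already-established case (iii) to get $\norm{\varphi}_{L^\infty(\Omega)}\le C\norm{\eta}_{L^{q'}(\Omega)}$, $q'>\frac{n}{2\s}$) essentially re-derives the $L^1\to L^q$ estimate rather than quoting it; it is free of circularity because (iii) is proved first, and the test function is admissible since both $\varphi$ and $\fraclaplacian\varphi=\eta$ are bounded. Both routes give the same constants' dependence; the paper's is shorter because the maximum-principle comparison with the Riesz potential does the work at once, while yours is more self-contained in how it passes from regular to $L^1$-weak solutions.
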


\begin{proof}
	We prove first  $(i)$ and $(ii)$ in the case $g\equiv 0$.
	As in the proof of \Cref{Prop:ExistenceL1}, by splitting $h= h^+- h^-$ we can assume without loss of generality that $h\geq0$, and thus $u\geq 0$ by the maximum principle; see \Cref{Prop:MaxPple}.
	Then, we consider the function
	\begin{equation}
		v := (-\Delta)^{-\s} (h \chi_{\Omega}),
	\end{equation}
	in the sense that $v$ is the Riesz potential of order $2\s$ of $h \chi_{\Omega}$.
	Since $v = (h \chi_{\Omega}) * |x|^{-n + 2\s}$ up to a normalizing constant, using that the fractional Laplacian and the convolution commute it is not difficult to see that 
	\begin{equation}
		\beqc{\PDEsystem}
		\fraclaplacian v & = & h& \text{ in } \Omega,\\
		v & \geq & 0 & \text{ in } \R^n\setminus \Omega,
		\eeqc
	\end{equation}
	in the $L^1$-weak sense, an therefore by the maximum principe it follows that $0 \leq u \leq v$.
	From this, $(i)$ and $(ii)$ (with $C_0 = 0$) follow readily from the classical embeddings for the Riesz potential exactly as in \cite[Proposition~1.4]{RosOtonSerra-Extremal}.
	
	To treat the case $g\not \equiv 0$, it suffices to write $u = \tilde{v} + w$, where $\tilde{v}$ solves the same problem as $u$ but with zero exterior condition (and for which we have already established the desired estimates), and $w$ solves
	\begin{equation}
		\label{Eq:ExteriorProblem}
		\beqc{\PDEsystem}
		\fraclaplacian w & = & 0& \text{ in } \Omega,\\
		w & = & g & \text{ in } \R^n\setminus \Omega,
		\eeqc
	\end{equation}
	and for which it holds that $\norm{w}_{ L^\infty (\Omega)}\leq C C_0$; see \cite[Lemma~2.1]{AudritoRosOton}.

	To establish $(iii)$, we take as before \begin{equation}
		v := (-\Delta)^{-\s} (h \chi_{\Omega}),
	\end{equation} 
	and we write $u = v + w$, with $w$ being the solution to
	\begin{equation}
		\label{Eq:ExteriorProblemLp}
		\beqc{\PDEsystem}
		\fraclaplacian w & = & 0& \text{ in } \Omega,\\
		w & = & g - v & \text{ in } \R^n\setminus \Omega.
		\eeqc
	\end{equation}
	From the embedding of the Riesz potential into Hölder spaces when $p > \frac{n}{2\s}$ (see \cite[Theorem~1.6]{RosOtonSerra-Extremal}), it follows that 
	\begin{equation}
		[v]_{C^{2\s - n/p}(\R^n)} \leq C \norm{h}_{L^p (\Omega)}
	\end{equation}
	for some constant $C$ depending only on $n$, $\s$, and $p$.
	Moreover, since $\Omega$ is bounded $h \chi_{\Omega}$ has compact support, and therefore $v$ has a decay at infinity (and in particular is bounded).
	Therefore, the exterior condition $g-v$ satisfies \eqref{Eq:Assumptionsg} but with an exponent $\beta = \min\{\alpha_0, 2\s - \frac{n}{p}\}$ instead of $\alpha_0$.
	From the boundary regularity results of \cite{AudritoRosOton}, it follows that
	\begin{equation}
		\label{Eq:EstimatesHarmonicExterior}
		\norm{w}_{C^{\beta}(\overline{\Omega})} \leq C C_0.
	\end{equation}

	Note that if $p > n$ the solution to \eqref{Eq:ExteriorProblemLp} is obtained by standard methods minimizing the $H^\s_\Omega$ seminorm in the convex set $ \{ \varphi \in H^\s_\Omega \ : \ \varphi - g + v \in H^\s_{\Omega,0}\}$.
	If $p\leq n$ (which can only happen if $s > 1/2$), then the regularity up to $\partial \Omega$ of the exterior condition $g- v$ may not be enough to define energy solutions (see the comments in \Cref{Sec:ExteriorCondition}), but we can consider $w$ being a viscosity solution\footnote{Note that viscosity solutions to \eqref{Eq:ExteriorProblem} are $L^1$-weak solutions. 
		Indeed, the existence and boundary regularity of viscosity solutions is proved in \cite{AudritoRosOton}. From this and the fact that $w \in C^\infty(\Omega)$ (since $\fraclaplacian w =0$) it follows that the equation holds pointwise and multiplying by a test function with compact support in $\Omega$ we can integrate by parts to obtain the $L^1$-weak formulation (note that the test functions with compact support are dense among the test functions considered in the $L^1$-weak formulation).
	} to \eqref{Eq:ExteriorProblemLp} instead, for which the estimate \eqref{Eq:EstimatesHarmonicExterior} holds as well.
\end{proof}

\begin{remark}
	\label{Remark:n<2s}
	The previous result does not provide any estimate for $n\leq 2s$ (which, since $\s\in (0,1)$, entails $n=1$ and $\s \leq 1/2$).
	Nevertheless, since in this case the Green function of any bounded domain (which is an interval) is explicit, we easily have an $L^\infty$ estimate provided that $h\in L^p(\Omega)$ for some $p>1$, as mentioned in Remark~1.5 of \cite{RosOtonSerra-Extremal}.
	This is used implicitly in all the bootstrap arguments carried out in this paper when the nonlinearity $f(\cdot)$ is Lipschitz and $n\leq 2s$.
	Indeed, since we consider solutions in $L^2(\Omega)\subset H^\s_\Omega$, the previous $L^\infty$ estimates yield the boundedness of the solution in the case $n\leq 2s$.
\end{remark}

To conclude the section, we present the method of sub and supersolutions (also called \textit{monotone iteration method}) in the $L^1$-weak setting.
We show how to find an $L^1$-weak solution to the semilinear problem 
\eqref{Eq:SemilinearProblem}
once we have a sub and a supersolution in the $L^1$-weak sense.

\begin{proposition}[Monotone iteration method in the $L^1$-setting]
	\label{Prop:MonotoneIteration}
	Let $\Omega \subset \R^n$ be a bounded smooth domain, and let $f \in C^1(\R)$ be a nondecreasing function.
	Assume that there exist two functions $\usup, \usub \in L^1_\s(\R^n)\cap L^1_\loc(\R^n\setminus \Omega, d^{-\s}_\Omega \d x)$ which are, respectively, a supersolution and a subsolution of \eqref{Eq:SemilinearProblem} in the $L^1$-weak sense, for some exterior condition $g$ as in \eqref{Eq:Assumptionsg}.
	Furthermore, assume that $\usub \leq \usup$ a.e. in $\Omega$.
	
	Then, there exists an $L^1$-weak solution $u$ to \eqref{Eq:SemilinearProblem} which is minimal relative to $\usub$, that is, $\usub \leq u \leq w$ for every $L^1$-weak supersolution $w$  such that $\usub \leq w$.
	In particular, $\usub \leq u \leq \usup$ a.e. in $\R^n$.
\end{proposition}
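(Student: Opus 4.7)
The plan is to construct the solution via the classical monotone iteration, starting from the subsolution and solving linear $L^1$-weak Dirichlet problems at each step; the key ingredients are the existence theory for linear $L^1$-weak problems (\Cref{Prop:ExistenceL1}), the maximum principle in the $L^1$-weak framework (\Cref{Prop:MaxPple}), and the monotonicity of $f$.

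First, I would set $u_0 := \usub$ and, inductively, define $u_{k+1}$ as the unique $L^1$-weak solution to
\begin{equation*}
\beqc{\PDEsystem}
\fraclaplacian u_{k+1} & = & f(u_k) & \text{ in } \Omega,\\
u_{k+1} & = & g & \text{ in } \R^n\setminus \Omega,
\eeqc
\end{equation*}
provided by \Cref{Prop:ExistenceL1}. For this to be well-defined I need $f(u_k)\in L^1(\Omega, d_\Omega^\s\dx)$ at each step; I would prove this alongside the monotonicity bound $\usub \leq u_k \leq u_{k+1} \leq \usup$ a.e., by induction on $k$. For the base case, since $\usub$ is an $L^1$-weak subsolution, $f(\usub)$ lies in the required weighted space. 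The inductive monotonicity step uses the maximum principle twice: $\fraclaplacian(u_{k+1}-u_k) = f(u_k)-f(u_{k-1})\ge 0$ in $\Omega$ in the $L^1$-weak sense (by the monotonicity of $f$), with zero exterior data, so \Cref{Prop:MaxPple} (with $c\equiv 0$) gives $u_{k+1}\ge u_k$; likewise, the difference $\usup-u_{k+1}$ satisfies $\fraclaplacian(\usup-u_{k+1})\ge f(\usup)-f(u_k)\ge 0$ and vanishes outside $\Omega$, so $u_{k+1}\le \usup$. Then $f(u_k)$ is trapped between $f(\usub)$ and $f(\usup)$ (both in $L^1(\Omega,d_\Omega^\s\dx)$), keeping the iteration well posed.

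Next, I would pass to the limit. By monotone convergence, $u_k$ converges a.e.\ and in $L^1(\Omega)$ to some $u$ with $\usub \le u\le \usup$, and by dominated convergence (with dominating function $|f(\usub)|+|f(\usup)|$ times $d_\Omega^\s$) we obtain $f(u_k)\to f(u)$ in $L^1(\Omega, d_\Omega^\s \dx)$, since $f$ is continuous and monotone. Writing the $L^1$-weak formulation for $u_{k+1}$ against an admissible test function $\varphi$ (so that $\varphi$ and $\fraclaplacian \varphi$ are bounded in $\Omega$ and $\varphi\equiv 0$ outside),
\begin{equation*}
\int_\Omega u_{k+1}\fraclaplacian\varphi \dx + \int_{\R^n\setminus\Omega} g\,\ncal_\s\varphi\dx = \int_\Omega f(u_k)\varphi\dx,
\end{equation*}
and passing $k\to\infty$ (using the $L^1$ convergence of $u_{k+1}$ in $\Omega$, the $L^1(d_\Omega^\s)$ convergence of $f(u_k)$, and the pointwise bound $|\varphi|\le C d_\Omega^\s$) yields that $u$ is an $L^1$-weak solution to \eqref{Eq:SemilinearProblem}.

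Finally, for minimality relative to $\usub$, given any $L^1$-weak supersolution $w$ with $w\ge \usub$, I would show by induction that $u_k\le w$ a.e. The base case is the hypothesis; the inductive step again invokes the maximum principle: $\fraclaplacian(w-u_{k+1})\ge f(w)-f(u_k)\ge 0$ in the $L^1$-weak sense, with nonnegative exterior data, so $w\ge u_{k+1}$. Letting $k\to\infty$ gives $\usub\le u\le w$, and specializing $w=\usup$ recovers the two-sided bound. The main obstacle I expect is the rigorous justification of the limit passage in the weak formulation --- in particular, controlling $f(u_k)$ uniformly in the weighted space so that dominated convergence can be applied despite only having $L^1(d_\Omega^\s)$ integrability on the bounds $f(\usub), f(\usup)$; this is handled exactly because sub/supersolution status encodes the right integrability from the start.
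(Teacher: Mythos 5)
Your proof is correct and follows essentially the same route as the paper's: iterate from the subsolution $\usub$ solving linear $L^1$-weak problems, establish $\usub \leq u_k \leq u_{k+1} \leq \usup$ by the maximum principle and monotonicity of $f$, pass to the limit by dominated convergence, and then obtain minimality by replacing $\usup$ with an arbitrary supersolution $w \geq \usub$. The only slight omission is that you do not spell out the base case $\usub = u_0 \leq u_1 \leq \usup$ of the main monotonicity induction, which relies on the subsolution/supersolution properties rather than on monotonicity of $f$; this is a small detail that the paper handles explicitly.
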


\begin{proof}
	Set $u_0:= \usub$ and, for $k\geq 1$, define recursively $u_k$ as the $L^1$-weak solution to
	\begin{equation}
		\label{Eq:MonotoneIterationPb}
		\beqc{\PDEsystem}
		\fraclaplacian u_k & = & f(u_{k-1}) & \text{ in } \Omega,\\
		u_k & = & g & \text{ in } \R^n\setminus \Omega.
		\eeqc
	\end{equation}
	Below we will see that in each step we have $f(u_{k-1}) \in L^1(\Omega, d_\Omega^\s\d x)$ and thus $u_k$ are well defined and uniquely determined by \Cref{Prop:ExistenceL1}.
	
	We claim that for $k\geq 0$, $\usub \leq u_k \leq u_{k+1} \leq \usup$ a.e. in $\Omega$.
	To prove it we proceed by induction, using the maximum principle (see \Cref{Prop:MaxPple}) and the monotonicity of $f$.
	First, note that $\fraclaplacian u_0 = \fraclaplacian \usub \leq f(\usub) = f(u_0) = \fraclaplacian u_1 = f(\usub) \leq f(\usup) \leq \fraclaplacian \usup$ in the $L^1$-weak sense in $\Omega$.
	Then, the maximum principle yields $\usub = u_0 \leq u_1 \leq  \usup$ a.e. in $\Omega$.
	Now, assume that we have $ u_{k-1} \leq u_{k} \leq \usup$ a.e. in $\Omega$.
	Then, using this hypothesis and the monotonicity of $f$, we have
	$ \fraclaplacian u_k =  f(u_{k-1}) \leq  f(u_k) = \fraclaplacian u_{k+1} = f(u_k) \leq f(\usup) \leq \fraclaplacian \usup$  in the $L^1$-weak sense in $\Omega$,
	and as before, the maximum principle yields $ u_{k} \leq u_{k+1} \leq \usup$ a.e. in $\Omega$, establishing our claim.
	Note that the previous argument, together with the monotonicity of $f$ and the fact that $f(\usub),f(\usup) \in L^1(\Omega, d_\Omega^\s\d x)$, yields that in each iteration step, after defining $u_k$ we get that $f(u_k) \in  L^1(\Omega, d_\Omega^\s\d x)$, guaranteeing the existence of $u_{k+1}$.
	
	Once the previous claim is proved, since the sequence $\{u_k\}_{k \geq 0}$ is nondecreasing and lies between the two integrable functions $\usub$ and $\usup$, we can define its pointwise limit $u$, and by dominated convergence $u_k \to u$ in $L^1(\Omega)$ as $k\to +\infty$.
	By continuity, we also have $f(u_{k} )\to f(u)$ a.e. in $\Omega$, and thus since $f(\usub) \leq f(u_{k}) \leq f(\usup)$, by the dominated convergence theorem, $f(u_{k} )\to f(u)$ in $L^1(\Omega, d_\Omega^\s\d x)$ as $k \to +\infty$.
	Taking all this into account, we can take the limit $k \to +\infty$ in the $L^1$-weak formulation of \eqref{Eq:MonotoneIterationPb} to obtain that $u$ is an $L^1$-weak solution to \eqref{Eq:SemilinearProblem}.
	
	Note that we have only used $\usup$ to guarantee some integrability properties, but since the iteration has been done from $\usub$, in the previous arguments we can replace $\usup$ by any other supersolution $w$ such that $w \geq \usub$ a.e. in $\Omega$, proving the minimality of $u$ relative to $\usub$.
\end{proof}

\begin{remark}
	\label{Remark:MonotoneIteration}
	The same result holds without the assumption of $f$ being nondecreasing, if we assume that $f$ is uniformly Lipschitz in the interval $(\inf_\Omega \usub , \sup_\Omega \usup)$.
	Indeed, if we consider the nonlinearity $\tilde{f} (t) :=  f(t) + \ell t$, where $\ell\geq 0$ is the Lipschintz constant of $f$ in the above interval, the problem is equivalent to find a solution to  $\fraclaplacian u  + \ell u  =  \tilde{f}(u) $ in $\Omega$ with the same exterior condition.
	Then, we can repeat the previous proof using that $\tilde{f}$ is nondecreasing and that the maximum principle holds in this case since $\ell\geq 0$ (see \Cref{Prop:MaxPple}).	
\end{remark}

\addtocontents{toc}{\SkipTocEntry}
\section*{Acknowledgements}

The author wants to thank Alessandro Audrito, Juan-Carlos Felipe-Navarro, Antonio J. Fernández, and Matteo Cozzi for fruitful discussions on the topic of this paper.

\bibliographystyle{amsplain}
\bibliography{biblio}

\end{document}